\newtheorem*{thm*}{Theorem}
\newtheorem*{cor*}{Corollary}
\newtheorem*{ex*}{Example}
\newtheorem*{prop*}{Proposition}
\newtheorem*{lemma*}{Lemma}
\newtheorem*{rmk*}{Remark}
\newtheorem{cl}{Claim}[subsubsection]
\newtheoremstyle{named}{}{}{\itshape}{}{\bfseries}{.}{.5em}{#1 #3}
\theoremstyle{named}
\newtheorem*{namedprop}{Proposition}
\newtheorem*{namedcor}{Corollary}
\let\OLDthebibliography\thebibliography
\renewcommand\thebibliography[1]{
  \OLDthebibliography{#1}
  \small
  \setlength{\parskip}{3pt}
  \setlength{\itemsep}{3pt plus 0.3ex}
}
\def\sc{\mathrm{s.c.}}
\def\LSVK{\mathrm{LSVK}}
\def\tr{\operatorname{tr}}
\def\ct{\operatorname{ct}}
\def\Tab{\operatorname{Tab}}
\def\Mar{\mathfrak{M}\,}
\def\supp{\operatorname{supp}}
\def\Resc{\mathcal{R}}
\def\tab{\mathbf{T}}
\begin{document}
\title{Fluctuations of interlacing sequences}
\subtitle{
{{\normalsize\protect\textit{to Uzy Smilansky on his $75\frac23$-th birthday}}}
}
\setkomafont{subtitle}{\normalsize}
\author{\normalsize Sasha Sodin\textsuperscript{1}}
\date{\normalsize\today}
\maketitle
\footnotetext[1] {\,\,\,School of Mathematical
Sciences, Tel Aviv University, Tel Aviv, 69978, Israel $\binampersand$ School of Mathematical
Sciences, Queen Mary University of London, London E1~4NS, United Kingdom. E-mail:
sashas{\textoneoldstyle}{\MVAt}post.tau.ac.il. Supported in part by the European
Research Council start-up grant 639305 (SPECTRUM).}

\begin{abstract}
\vspace{-1.3cm}
In a series of works published in the 1990-s, Kerov put forth various applications
of the circle of ideas centred at the Markov moment problem to the limiting shape 
of random continual diagrams
arising in representation theory and spectral theory.
We demonstrate on several examples that his approach is also adequate to study the fluctuations about the limiting shape.

In the random matrix setting, we compare  two continual diagrams:
one is constructed from the
eigenvalues of the matrix and the critical points of its characteristic polynomial, whereas the second one is constructed from the eigenvalues of the matrix and those of its
principal submatrix.
The fluctuations of the latter diagram were recently studied by Erd\H{o}s and Schr\"oder;
we discuss the fluctuations of  the former, and compare the two limiting processes.

For Plancherel random partitions, the Markov correspondence establishes the
equivalence between Kerov's central limit theorem for the Young diagram  and the Ivanov--Olshanski central limit theorem for the transition measure. We outline a combinatorial proof of the latter, and compare the limiting process with the ones
of random matrices.
\end{abstract}

\section{Overview}

\subsection{Markov correspondence}\label{s:mc}
Two sequences of real numbers $A = (a_1 < \cdots < a_{n})$ and
$B = (b_1 < \cdots < b_{n-1})$ are called interlacing if
\[ a_1 < b_1 < a_2 < \cdots < b_{n-1} < a_{n}~. \]
To a pair of interlacing sequences $(A,B)$ one associates the probability measure
\[ \mu = \sum_{j=1}^{n} p_j \delta_{a_j}~, \]
where $p_j$ are defined by the simple fraction decomposition
\begin{equation}\label{eq:pfd}
\frac{\prod_{j=1}^{n-1}(z- b_j)}{\prod_{j=1}^{n} (z-a_j)} = \sum_{j=1}^{n} \frac{p_j}{z-a_j}~.
\end{equation}
Vice versa, a probability measure supported on a finite set of
atoms gives rise to a pair of interlacing sequences.

This construction admits numerous generalisations. The relation
\begin{equation}\label{eq:pfd'}
\exp \left\{ -  \int \log(z- x) d (\tau_+(x)-\tau_-(x)) \right\} = \int \frac{d\mu(x)}{z-x}~,
\end{equation}
obtained from  (\ref{eq:pfd}) by replacing sums with integrals, forms the basis of the solution of the Markov moment problem (see \cite{M1,AK_book,KN}), and is
one of the forms of the Markov correspondence (which we further discuss below and in \ref{hom}). In the terminology of
Kerov \cite{Kerov_book,Kerov_interl},  $\tau_+$ and $\tau_-$ corresponding to a probability measure $\mu$  form a pair of interlacing measures; such pairs are intrinsically characterised by the inequalities
\begin{equation}\label{eq:interlm}
\tau_+[x, \infty) \geq \tau_-[x, \infty)~, \quad \tau_+(-\infty, x] \geq \tau_-(-\infty, x] \quad (x \in \mathbb{R})~.
\end{equation}

The equality (\ref{eq:pfd'})  may  be viewed as
a connection between an additive and a multiplicative representation of a
function from the Nevanlinna class. In this form it admits further generalisations,
extensively used starting from the works of Akhiezer and Krein (see e.g.\ the appendix to \cite{KN}); we also refer to the work of Yuditskii
\cite{Yud} for some recent developments.

\medskip\noindent
In the 1990-s, Kerov discovered a number of applications of the Markov correspondence
to problems in representation theory and analysis. These applications form
a central theme in the monograph \cite{Kerov_book}; see further the survey \cite{Kerov_interl}.

Here we follow Kerov and switch to the language of continual diagrams, which are $1$--Lipschitz functions coinciding with $|x-a|$ for large values of $|x|$ (such as in Figure~\ref{fig:young} [right] and Figure~\ref{fig}; see \ref{contdiag} for a
formal definition).
The mapping from pairs of interlacing sequences to continual diagrams is given by
$(A,B) \mapsto \omega$,
\begin{equation}\label{eq:omega} \omega(x) = \sum_{j=1}^n |x - a_j| - \sum_{j=1}^{n-1} |x-b_j|~;\end{equation}
whereas for interlacing measures $(\tau_+, \tau_-)$ as in (\ref{eq:pfd'}) or (\ref{eq:interlm}), one sets
\[ \omega(x) = \int |x-a| \, d\tau_+(a) - \int |x-b| \, d\tau_-(b)~. \]
The Markov correspondence  (\ref{eq:pfd'}) induces a bijection between
continual diagrams $\omega$ and probability measures $\mu$, which is 
called the Markov transform and denoted $\mu = \Mar \omega$. Some of its properties are listed in \ref{hom}.

\medskip\noindent
Continual diagrams appear naturally as scaling limits of Young diagrams. Indeed,
a Young diagram rotated by $135^\circ$ (with respect to the English convention) is a continual diagram, see
Figure~\ref{fig:young} for an illustration and \ref{young}, \ref{plan} for the formal construction.
\begin{figure}
\begin{center}
\yng(7,4,4,3,1)
\,\,\,
\YRussian
 \yng(7,4,4,3,1)
 \,\,
\raisebox{-1cm}{\begin{tikzpicture}
      \draw[scale=0.3,->] (-10,0) -- (10,0) ;
      \draw[scale=0.3,->] (0,0) -- (0,10);
      \draw[scale=0.3,domain=-8:9,samples=100,variable=\x,blue,thick] plot ({\x},{abs(\x-7)+abs(\x-3)+abs(\x)+abs(\x+3)+abs(\x+5)-abs(\x-6) - abs(\x-1)-abs(\x+1) -abs(\x+4)});
      \draw[scale=0.2,domain=-10:11,samples=200,variable=\x,red,dashed] plot ({\x},{abs(\x)});    \end{tikzpicture}}
\end{center}
\caption{A Young diagram in English convention (left) and in Russian convention (centre), and the corresponding continual diagram (right).}\label{fig:young}
\end{figure}
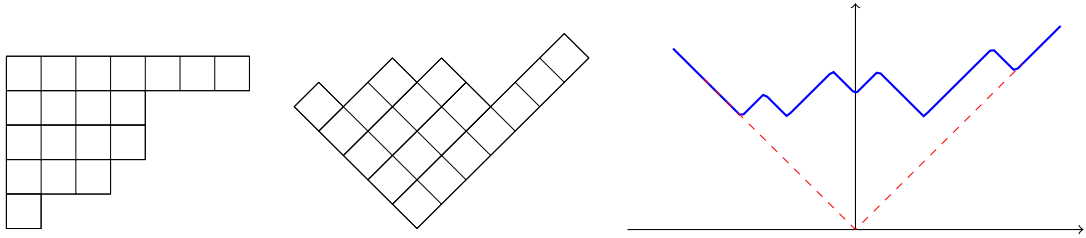
Kerov showed that the Markov transform $\mu_n = \Mar \omega_n$ of a continual
diagram $\omega_n$ obtained in this way encodes the transition probabilities of
the Young diagram in a stochastic process called the Plancherel growth (see \ref{tran}), and
called $\mu_n$ the transition measure of the Young diagram. If $\omega_n$ is
a random continual diagram associated with a Young
diagram  sampled at random from the Plancherel measure, the Logan--Shepp--Vershik--Kerov limit law \cite{LS,VK1,VK2} asserts that (uniformly almost surely)
\begin{equation}\label{eq:lsvk0} \frac{1}{\sqrt{n}} \omega_n(\sqrt{n} x) \longrightarrow \Omega_\LSVK(x) =
\begin{cases}
\frac2\pi \left(x \arcsin\frac{x}{2} + \sqrt{4-x^2} \right)~, & |x| \leq 2 \\
|x|~, & |x| > 2
\end{cases}\end{equation}
Using the Markov correspondence, Kerov deduced that $\tilde\mu_n$ (here and further
in the introduction,  tildes indicate unspecified scaling) obey the semicircle law 
\cite{Kerov_tr}
\begin{equation}\label{eq:k0} 
d\tilde\mu_n(x) \to d\rho_\sc(x) = \frac{1}{2\pi}\sqrt{(4-x^2)_+} \,dx~;
\end{equation}
vice versa, (\ref{eq:k0}) implies  (\ref{eq:lsvk0}).

\smallskip\noindent
Kerov also discovered  \cite{Kerov_sep} a random matrix counterpart of these
statements; it comes in two flavours. Let $\tilde{P}_n(x)$ be the characteristic polynomial of
a Wigner matrix $\tilde H_n$ of dimension $n \times n$ (see \ref{wig1}); let $\tilde\lambda_j$ be the zeros of
$\tilde{P}_n$ (which are the eigenvalues of $\tilde H_n$). Also let $\tilde\lambda_j^*$ be the
zeros of $\tilde P_n'$, and let $\tilde\lambda_j^{**}$ be the eigenvalues of the top-left $(n-1) \times (n-1)$ principal submatrix of $\tilde H_n$. Then (see \ref{wig1})
\begin{equation}\label{eq:lsvk_glob}\sum_{j=1}^{n} |x - \tilde\lambda_j| - \sum_{j=1}^{n-1} |x - \tilde\lambda_j^*| \to \Omega_\LSVK(x) \end{equation}
and also (see \ref{kb2})
\begin{equation}\label{eq:lsvk_loc} \sum_{j=1}^{n} |x - \tilde\lambda_j| - \sum_{j=1}^{n-1} |x - \tilde\lambda_j^{**}| \to \Omega_\LSVK(x)~. \end{equation}
The former was proved by Kerov; the latter was proved by Alexey Bufetov \cite{Buf}
who strengthened a  result in \cite{Kerov_sep}.

As put forth by Kerov, (\ref{eq:lsvk_glob}) is equivalent (by the Markov correspondence) to Wigner's semicircle  law
\[ \widetilde{\rho}_n= \frac{1}{n} \sum_{j=1}^{n} \delta_{\tilde\lambda_j} \to \rho_\sc\]
for the normalised eigenvalue counting measure, see further \ref{kb2}. On the other hand, (\ref{eq:lsvk_loc})
is equivalent to Wigner's law for the spectral measure associated with a fixed
vector in $\mathbb C^{n}$, see \ref{wig1}.

\smallskip\noindent
Another similar looking result was found by Kerov \cite{Kerov_sep} in the setting
of Jacobi matrices with regularly varying coefficients. There (see \ref{jac}), the counterparts of $\tilde\lambda_j$ and $\tilde\lambda_{j}^{**}$ are the (properly rescaled) eigenvalues of
an $n\times n$ and an $(n-1) \times (n-1)$ principal submatrix, respectively; then (\ref{eq:lsvk_loc}) holds, as a consequence of the semicircle limit law for what
could be colloquially called the spectral measure at infinity (see \ref{jac} and \ref{limjac_1}). On the other hand, the counterpart of
(\ref{eq:lsvk_glob}) is, in general, false (see \ref{arcsin}).

\medskip\noindent
In Section~\ref{S:lim}, mostly following \cite{Kerov_book,Kerov_interl} in substance if not
in terminology, we give an overview of these results with some proofs.

\subsection{Fluctuations about the limiting shape}\label{intr:fl}

In the main part of the paper (Sections~\ref{S:cor} and \ref{S:part}), we use the Markov correspondence to study the
deviations of diagrams and of interlacing sequences from the limiting shape. We observe that, although
(\ref{eq:pfd'}) is highly non-linear, it can be linearised about the limiting shape. Therefore
one can study the fluctuations of the left-hand side via the right-hand side, and vice versa.
Several forms of this assertion are proved in Section~\ref{s:detcor}, for example, in
the case of the limiting shape $\Omega_\LSVK$ we have:
\begin{namedprop}[\ref{pi}]
Let $\mu_n$ be  probability measures, and let $\omega_n = \Mar \mu_n$ be the corresponding continual diagrams.
Let $(\epsilon_n)_n$  and $(\alpha_k)_{k \geq 2}$ be two  sequences such that
$\epsilon_n \to 0$ and $ \frac1k \log |\alpha_k| \to 0$.
Then
\begin{equation}\label{eq:pi_assume} \lim_{n \to \infty} \int_{-2}^2 \phi(x) \frac{d\mu_n(x) - d\rho_\sc(x)}{\epsilon_n} =  \sum_{k\geq 2} \alpha_k  \int_{-2}^2 \phi(x) \,2 T_k(x/2)\, d\rho_\arcsin(x) 
\end{equation}
for all test functions $\phi$ which are analytic in  $[-2,2]$ if and only if
\begin{equation}\label{eq:pi_conclude} \lim_{n \to \infty}  \int_{-2}^2 \phi(x)  \frac{\omega_n(x) - \Omega_\LSVK(x)}{\epsilon_n} dx =  \sum_{k \geq 1} \frac{4 \alpha_{k+2}}{k+1} \int_{-2}^2 \phi(x) U_{k}(x/2) d\rho_\sc(x) \end{equation}
for all such $\phi$. \end{namedprop}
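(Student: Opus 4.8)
The plan is to exploit the explicit linearisation of the Markov correspondence around the pair $(\rho_\sc, \Omega_\LSVK)$. The key structural fact is that, if one writes the moment/cumulant generating functions on the two sides of~\eqref{eq:pfd'}, then $\Omega_\LSVK$ corresponds exactly to $\rho_\sc$, and a small perturbation $d\mu_n = d\rho_\sc + \epsilon_n \, d\nu_n$ of the measure corresponds, to first order in $\epsilon_n$, to a perturbation $\omega_n = \Omega_\LSVK + \epsilon_n \, \eta_n$ of the diagram, where the map $\nu_n \mapsto \eta_n$ is \emph{linear} and diagonalised by an appropriate orthogonal basis. Concretely, the natural coordinates are the Chebyshev polynomials: one tests $d\mu_n - d\rho_\sc$ against $2T_k(x/2)$ (these are the ``free cumulant'' directions, cf.\ the right-hand side of~\eqref{eq:pi_assume}) and tests $\omega_n - \Omega_\LSVK$ against $U_{k}(x/2)$ on the diagram side. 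So the first step is to record the linearisation lemma: differentiating~\eqref{eq:pfd'} at $\mu = \rho_\sc$ one gets that the perturbation of $\int d\mu/(z-x)$ in the direction $2T_k(x/2)\,d\rho_\arcsin$ matches the perturbation of $\exp\{-\int\log(z-x)\,d(\tau_+-\tau_-)\}$ coming from a diagram perturbation proportional to $U_{k-2}(x/2)\,d\rho_\sc$, with the explicit constant $\tfrac{4}{k-1}$ appearing in~\eqref{eq:pi_conclude}. This is a generating-function identity; the cleanest route is via the substitution $z = w + 1/w$ (which uniformises the square root $\sqrt{z^2-4}$), under which the Cauchy transform of $\rho_\sc$ becomes $1/w$, Chebyshev polynomials $2T_k(x/2)$ become $w^k + w^{-k}$, and $\log$ of the square-root part linearises term by term.

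Granting the linearisation lemma, the proof of the equivalence is then essentially a change of basis. For the direction \eqref{eq:pi_assume}$\Rightarrow$\eqref{eq:pi_conclude}: the hypothesis says $\tfrac{1}{\epsilon_n}(d\mu_n - d\rho_\sc)$ converges, when paired with analytic test functions, to the signed measure $\sum_k \alpha_k\, 2T_k(x/2)\,d\rho_\arcsin$. Expanding an analytic $\phi$ on $[-2,2]$ in Chebyshev polynomials $\phi(x) = \sum_m c_m\, 2T_m(x/2)$ with coefficients $c_m$ decaying faster than any geometric rate (because $\phi$ extends holomorphically to an ellipse), the growth hypotheses $\tfrac1k\log|\alpha_k|\to 0$ guarantee that all the series in sight converge absolutely and can be rearranged. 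One then feeds the test function $\phi$ through the linearisation lemma: pairing $\phi$ against $\tfrac{1}{\epsilon_n}(\omega_n - \Omega_\LSVK)$ equals, up to lower order, pairing a transformed test function against $\tfrac{1}{\epsilon_n}(d\mu_n-d\rho_\sc)$, and taking $n\to\infty$ produces exactly the right-hand side of~\eqref{eq:pi_conclude} after matching the Chebyshev/coefficient bookkeeping. The reverse implication is identical, run backwards, using that the linear map on the relevant sequence spaces is invertible (the Chebyshev-to-Chebyshev matrix is triangular with nonzero diagonal, and the constants $\tfrac{4}{k-1}$ never vanish).

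I expect the main obstacle to be \emph{analytic rather than algebraic}: controlling the remainder in the linearisation uniformly in $n$, so that the $o(\epsilon_n)$ error terms genuinely vanish after dividing by $\epsilon_n$ and pairing against $\phi$. The identity~\eqref{eq:pfd'} is quadratic-and-higher in the perturbation, and one needs an a priori bound — coming from the hypothesis that the rescaled differences converge against all analytic test functions, hence are bounded in a suitable dual norm — to dominate the nonlinear tail. The bookkeeping with the two different reference measures, $d\rho_\sc$ on the diagram side and $d\rho_\arcsin$ on the measure side, together with the orthogonality relations $\int 2T_j(x/2)\,2T_k(x/2)\,d\rho_\arcsin = 2\delta_{jk}$ (for $j,k\ge1$) and $\int U_j(x/2)U_k(x/2)\,d\rho_\sc = \delta_{jk}$, is what pins down the precise constant $\tfrac{4\alpha_{k+2}}{k+1}$, so that calculation must be done carefully once but is otherwise routine. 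A secondary technical point is justifying the interchange of the limit $n\to\infty$ with the infinite sum over $k$ in~\eqref{eq:pi_conclude}; this is where the hypotheses $\epsilon_n\to0$ and $\tfrac1k\log|\alpha_k|\to0$ are used together with the super-geometric decay of the Chebyshev coefficients of $\phi$, via dominated convergence on $\ell^1$ of a product sequence.
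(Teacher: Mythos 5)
Your proposal follows essentially the same route as the paper: the paper linearises the logarithm of the Markov correspondence identity at the level of Stieltjes transforms (Lemma~\ref{fluct} and Corollary~\ref{fluct2}), then computes $-2\,w_{\rho_\arcsin,k}/w_{\rho_\sc}=2\,w_{\rho_\arcsin,k-1}$ explicitly --- which is precisely your Joukowski-substitution/Chebyshev diagonalisation yielding the constant $\tfrac{4}{k-1}$ --- and passes to general analytic test functions by Cauchy's theorem rather than by Chebyshev expansion of $\phi$. The only point your sketch glosses over is the constant of integration when recovering $\omega_n-\Omega_\LSVK$ from its derivative (the paper fixes the centring by shifting to $\omega_n(x-\delta_n)$, using that the diagrams agree to $o(\epsilon_n)$ outside the support); the nonlinear-remainder issue you flag is in fact dispatched trivially, since the hypothesis already provides an $o(\epsilon_n)$ expansion of the Stieltjes transform and one merely composes with $\log$.
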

\noindent Here
\[ d\rho_\arcsin(x) = \frac{\mathbbm{1}_{[-2,2]}(x)dx}{\pi\sqrt{4-x^2}} \]
is the arcsine distribution,  $T_k$ and $U_k$ are the Chebyshev polynomials (\ref{eq:chebdef}), and the Markov transform $\mathfrak M$ is formally defined in \ref{hom}. A marginally more general formulation is given in \ref{pi}, and the
stochastic setting (as opposed to deterministic deviations from the limit shape) is commented upon
in \ref{clt}. Note that the integrals are exactly
the  coefficients of $\phi$ in the Chebyshev expansions:
\[\begin{split} \phi(x) &= \sum_{k \geq 0} T_k(x/2)
 \int_{-2}^2 \phi(y) \, (2-\delta_{k0}) T_k(y/2)\,  d\rho_\arcsin(y) \\
 &=
 \sum_{k \geq 0} U_k(x/2) \int_{-2}^2 \phi(y) U_{k}(y/2) d\rho_\sc(y)~.
 \end{split}\]
In particular, the assumption (\ref{eq:pi_assume}) implies that (and is almost equivalent to)
 \begin{equation}\label{eq:termwise} \int T_k(x/2) (d\mu_n(x) - d\rho_\sc(x)) = \epsilon_n \alpha_k + o(\epsilon_n)~, \quad n \to \infty~.\end{equation}
 We use Proposition~\ref{pi} to study the fluctuations of random  diagrams
 appearing in the theory of random matrices and in the representation theory
 of the symmetric group.

\medskip\noindent
The study of fluctuations of diagrams was initiated by a theorem, proved by Kerov \cite{Kerov_CLT} in the 1990-s, which describes the fluctuations of a (Plancherel) random Young diagram about the limiting shape. Informally,
\begin{equation}\label{eq:clt_kerov} \frac{1}{\sqrt{n}} \omega_n(\sqrt{n}x)  \approx \Omega_\LSVK(x) + \frac{1}{\sqrt{n}}  \sum_{k \geq 1} \frac{2 g_{k+2}}{\sqrt{k+1}} \, \frac{U_{k}(x/2) \sqrt{4-x^2}}{2\pi}~,\end{equation}
where $g_k$ are independent, identically distributed standard Gaussian variables
(see \ref{kerov_clt}). Another proof, based on Kerov's unpublished notes, was given
by Ivanov and Olshanski in \cite{IO}. We refer to the works \cite{BufGor,Mel,Moll}
for various generalisations, not discussed here.

Ivanov and Olshanski also described the fluctuations of the transition measure $\tilde\mu_n$ associated with $\omega_n$ (see \ref{tran}):
\begin{equation}\label{eq:clt_io} d\tilde\mu_n(x) \approx d\rho_\sc(x) + \frac{1}{\sqrt{n}} \sum_{k \geq 3} \frac{\sqrt{k-1}}{2} g_{k} \, \frac{2 T_k(x/2)dx}{\pi \sqrt{4-x^2}}~. \end{equation}

As observed in \cite{IO}, (\ref{eq:clt_io}) bears a similarity to Johansson's central limit
theorem \cite{Joh} for the Gaussian Unitary Ensemble:
\begin{equation}\label{eq:joh_clt}
 d\tilde\rho_n(x) \approx d\rho_\sc(x) + \frac{1}{n}  \sum_{k \geq 1} \frac{\sqrt{k}}2 g_k \, \frac{2 T_k(x/2)dx}{\pi\sqrt{4-x^2}}
 \end{equation}
 (see \ref{kerov_clt}). This raises the question what is the analogue of Kerov's central limit theorem (\ref{eq:clt_kerov}) in random matrix context.

\bigskip\noindent
Recently, this question was studied by Erd\H{o}s and Schr\"oder  \cite{ES}. Their
result is both general (generalised Wigner matrices are considered), and strong
(the $\approx$ sign in (\ref{eq:es}) below can be understood pointwise, with an explicit
power-law estimate on the error term). In the special case of the Gaussian Unitary Ensemble, the result of \cite{ES} asserts that
the fluctuations of the diagram corresponding to the eigenvalues of $\tilde H_n$ and of its principal submatrix are described by
\begin{align}\label{eq:es}
&\sum_{j=1}^{n+1} |x - \tilde\lambda_j| - \sum_{j=1}^{n} |x - \tilde\lambda_j^{**}|  \approx \Omega_{LSVK}(x) + \frac{1}{\sqrt{n}} \Delta_{1}^{\Mar}(x)~, \\
\label{eq:es'}
&\Delta_{1}^\Mar(x) =  \frac{2g_1}{\pi} \arcsin \frac{x}{2} +
\sum_{k\geq 1} \frac{2(g_{k}-g_{k+2})}{k+1} \frac{U_k(x/2) \sqrt{4-x^2}}{2\pi}~,
\end{align}
where the argument of the arcsine is truncated at $\pm 1$ (see further \ref{lp} and \ref{lp2ors}). Parallel results for Jacobi $\beta$-ensembles
were obtained by Gorin and Zhang in \cite{GZ}.

The right-hand side of (\ref{eq:es}) does not look similar to (\ref{eq:clt_kerov}),
for two reasons. First, a typical random partition of $n$ has $\asymp \sqrt{n}$
rows, therefore the normalisation $1/\sqrt{n}$ in (\ref{eq:es}) would correspond to
$1/\sqrt[4]{n}$, rather than $1/\sqrt{n}$, in (\ref{eq:clt_kerov}).
Second, the Gaussian process in (\ref{eq:es'})  has a continuous modification (it is roughly a reparametrised Brownian motion), whereas the sum in (\ref{eq:clt_kerov}) does not
even converge in $L_2$.

\medskip\noindent
These differences are natural in view of the Markov correspondence.  As mentioned in Section~\ref{s:mc} 
above, the Markov transform takes the left-hand side of (\ref{eq:es})
 to the spectral measure $\tilde\mu_n$ (see \ref{wig1}). The fluctuations of the latter were studied, in the context
 of Gaussian ensembles, by Lytova and Pastur \cite{LP}; for the GUE, their result (see \ref{lp}) asserts that
\begin{align}\label{eq:lp}
&d\tilde\mu_n(x) \approx d\rho_\sc(x) + \frac{1}{\sqrt{n}} \Delta_{1}(x)dx~, \\
&\Delta_{1}(x) \sim \sum_{k \geq 1} g_k U_k(x/2) \frac{\sqrt{4-x^2}}{2\pi}~.
\end{align}
As noted in \cite{LP}, the larger scale $1/\sqrt{n}$ reflects the Gaussian fluctuations
of the eigenvectors of the random matrix.
The result (\ref{eq:lp}) was extended to other Wigner matrices by the same authors
\cite{LP2} and by
Pizzo--Renfrew--Sosh\-nikov \cite{ORS}; in this more general setting,
 the limiting process
may change, and is not necessarily Gaussian (see \ref{lp2ors}).

In \ref{lp}, we feed (\ref{eq:lp}) into the Markov correspondence and obtain
a version of (\ref{eq:es}), albeit in much weaker topology than \cite{ES}.
More general Wigner matrices are considered in  \ref{lp2ors}.
(Vice versa, it may be possible to deduce a strengthened version of the central limit
theorems in \cite{LP2,ORS} from the result of Erd\H{o}s--Schr\"oder
\cite{ES} in its full strength and generality; we do not pursue
this direction here.\footnote{See further \ref{esfluct}.})

\medskip\noindent
The discussion above raises the expectation that a version of (\ref{eq:es}) with
the critical points $\tilde\lambda_j^{*}$ in place of the submatrix eigenvalues $\tilde\lambda_j^{**}$ may bear more similarity to Kerov's theorem
(\ref{eq:clt_kerov}). Indeed, appealing to Johansson's central limit theorem (\ref{eq:joh_clt}), we show
that
\begin{namedcor}[\ref{joh}] For the Gaussian Unitary Ensemble,
\begin{align}\label{eq:es_glob}
&\sum_{j=1}^{n} |x - \tilde\lambda_j| - \sum_{j=1}^{n-1} |x - \tilde\lambda_j^{*}|  \approx \Omega_{LSVK}(x) + \frac{1}{n} \Delta_{\tr}^{\Mar}(x)~, \\
\label{eq:es_glob'}
&\Delta_{\tr}^{\Mar}(x) \sim  \frac{2}{\pi} g_1 \arcsin(x/2) + \sum_{k\geq 0} \frac{2 \sqrt{k+2}}{k+1} g_{k+2} \, \frac{U_{k}(x/2)\sqrt{4-x^2}}{2\pi}
\end{align}
\end{namedcor}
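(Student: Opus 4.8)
The plan is to recognise the left-hand side of (\ref{eq:es_glob}) as the continual diagram whose Markov transform is the normalised eigenvalue counting measure $\tilde\rho_n=\tfrac1n\sum_j\delta_{\tilde\lambda_j}$, and then to feed Johansson's theorem (\ref{eq:joh_clt}) into Proposition~\ref{pi}. For the identification, write $\tilde P_n(z)=\prod_{j=1}^n(z-\tilde\lambda_j)$, so that $\tilde P_n'(z)=n\prod_{j=1}^{n-1}(z-\tilde\lambda_j^{*})$ and the simple fraction decomposition (\ref{eq:pfd}) of
\[ \frac{\prod_{j=1}^{n-1}(z-\tilde\lambda_j^{*})}{\prod_{j=1}^{n}(z-\tilde\lambda_j)}=\frac1n\,\frac{\tilde P_n'(z)}{\tilde P_n(z)}=\sum_{j=1}^{n}\frac{1/n}{z-\tilde\lambda_j} \]
has all weights equal to $1/n$; hence the diagram $\omega_n(x)=\sum_{j=1}^n|x-\tilde\lambda_j|-\sum_{j=1}^{n-1}|x-\tilde\lambda_j^{*}|$ equals $\Mar\tilde\rho_n$ (as recorded in \ref{wig1}, \ref{kb2}), and (\ref{eq:lsvk_glob}), i.e.\ $\omega_n\to\Omega_\LSVK$, is nothing but Wigner's law $\tilde\rho_n\to\rho_\sc$. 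The assertion (\ref{eq:es_glob}) is the next-order refinement.

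I would next quote Johansson's central limit theorem (\ref{eq:joh_clt}) for the GUE in the equivalent Chebyshev form (\ref{eq:termwise}): for test functions analytic on $[-2,2]$ the fluctuations of $\tilde\rho_n$ are asymptotically Gaussian, with
\[ \int_{-2}^2 T_k(x/2)\,\bigl(d\tilde\rho_n(x)-d\rho_\sc(x)\bigr)=\frac1n\,\frac{\sqrt{k}}{2}\,g_k+o\!\left(\frac1n\right),\qquad k\geq1, \]
the $(g_k)$ jointly asymptotically standard Gaussian and independent, and with vanishing $k=0$ coefficient because $\tilde\rho_n$ and $\rho_\sc$ are probability measures. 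This is exactly the hypothesis of Proposition~\ref{pi} with $\epsilon_n=1/n\to0$ and $\alpha_k=\tfrac{\sqrt{k}}{2}\,g_k$, the condition $\tfrac1k\log|\alpha_k|\to0$ holding almost surely by the Gaussian tails. As the $\alpha_k$ are random I would invoke the stochastic version of the Proposition indicated in \ref{clt}, and as the $k=1$ and $k=2$ modes are present, its slightly more general form stated in \ref{pi}.

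Feeding this into the conclusion (\ref{eq:pi_conclude}) transports the fluctuations to the diagram side. For $k\geq1$ the coefficient of $U_k(x/2)\,\tfrac{\sqrt{4-x^2}}{2\pi}$ in $n(\omega_n-\Omega_\LSVK)$ is $\tfrac{4\alpha_{k+2}}{k+1}=\tfrac{2\sqrt{k+2}}{k+1}\,g_{k+2}$, which is the $k\geq1$ part of (\ref{eq:es_glob'}); the same formula at $k=0$ gives $4\alpha_2=2\sqrt2\,g_2$ for the $U_0$ term. The remaining $\tfrac2\pi g_1\arcsin(x/2)$ reflects the $k=1$ measure mode $\alpha_1=\tfrac12 g_1$, i.e.\ the Gaussian fluctuation $\delta_n:=\int x\,d\tilde\rho_n(x)=\tfrac1n\tr\tilde H_n$ (of size $\tfrac1n g_1$) of the centre of mass: under the Markov correspondence a rigid translation of $\mu$ is a rigid horizontal translation of $\omega$, so it contributes $-\delta_n\,\Omega_\LSVK'(x)=-\tfrac1n g_1\,\tfrac2\pi\arcsin(x/2)$, where $\Omega_\LSVK'(x)=\tfrac2\pi\arcsin(x/2)$ with the argument truncated at $\pm1$ for $|x|>2$; since $-g_1\overset{d}{=}g_1$ this is the first term of (\ref{eq:es_glob'}), and to leading order the translation leaves the higher modes intact because $\int\tfrac{d}{dx}T_k(x/2)\,d\rho_\sc(x)=0$ for $k\geq2$. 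Assembling the three contributions gives (\ref{eq:es_glob})--(\ref{eq:es_glob'}), with the $(g_k)$ inherited verbatim from (\ref{eq:joh_clt}) and with ``$\approx$'' read as convergence in distribution of the pairings against analytic test functions --- weaker than the pointwise statement of \cite{ES}, and, as in Kerov's theorem (\ref{eq:clt_kerov}), with a limiting series that does not converge in $L_2$ (unlike the Brownian-type process in (\ref{eq:es'})).

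I expect the main friction to be the bookkeeping of the two low Chebyshev modes $k=1,2$: these lie outside the scope of the form of Proposition~\ref{pi} printed above, so one must either appeal to its general version in \ref{pi} or carry out the elementary rigid-translation argument indicated here; and, as throughout this circle of ideas, some care is needed in pinning down the topology in which the asymptotic equalities are claimed.
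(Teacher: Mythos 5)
Your proposal is correct and follows essentially the same route as the paper: identify the diagram built from the eigenvalues and the critical points as $\Mar^{-1}\tilde\rho_n$ (this is the content of Lemma~\ref{jacdiag} and the definition of $\varpi_n$ in \ref{guedef}), and feed Johansson's theorem into the stochastic version of Proposition~\ref{pi} --- which is all the paper itself does in \ref{joh}. Your only worry, the $k=1,2$ modes, is already resolved by the body version of Proposition~\ref{pi} (whose sum starts at $k\geq1$ and whose $-\tfrac{4c_1}{\pi}\arcsin(x/2)$ term is exactly your rigid-translation contribution), so the supplementary translation argument, while a correct consistency check, is not needed.
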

The precise formulation is given in \ref{joh},  a comparison between (\ref{eq:es_glob}) and (\ref{eq:es}) --- in Figure~\ref{fig}, and a generalisation to other Wigner matrices ---
in \ref{joh_gen}.

\begin{figure}\label{fig}
\begin{center}
\includegraphics[trim={1cm 4cm  1cm 4cm}, scale=.4]{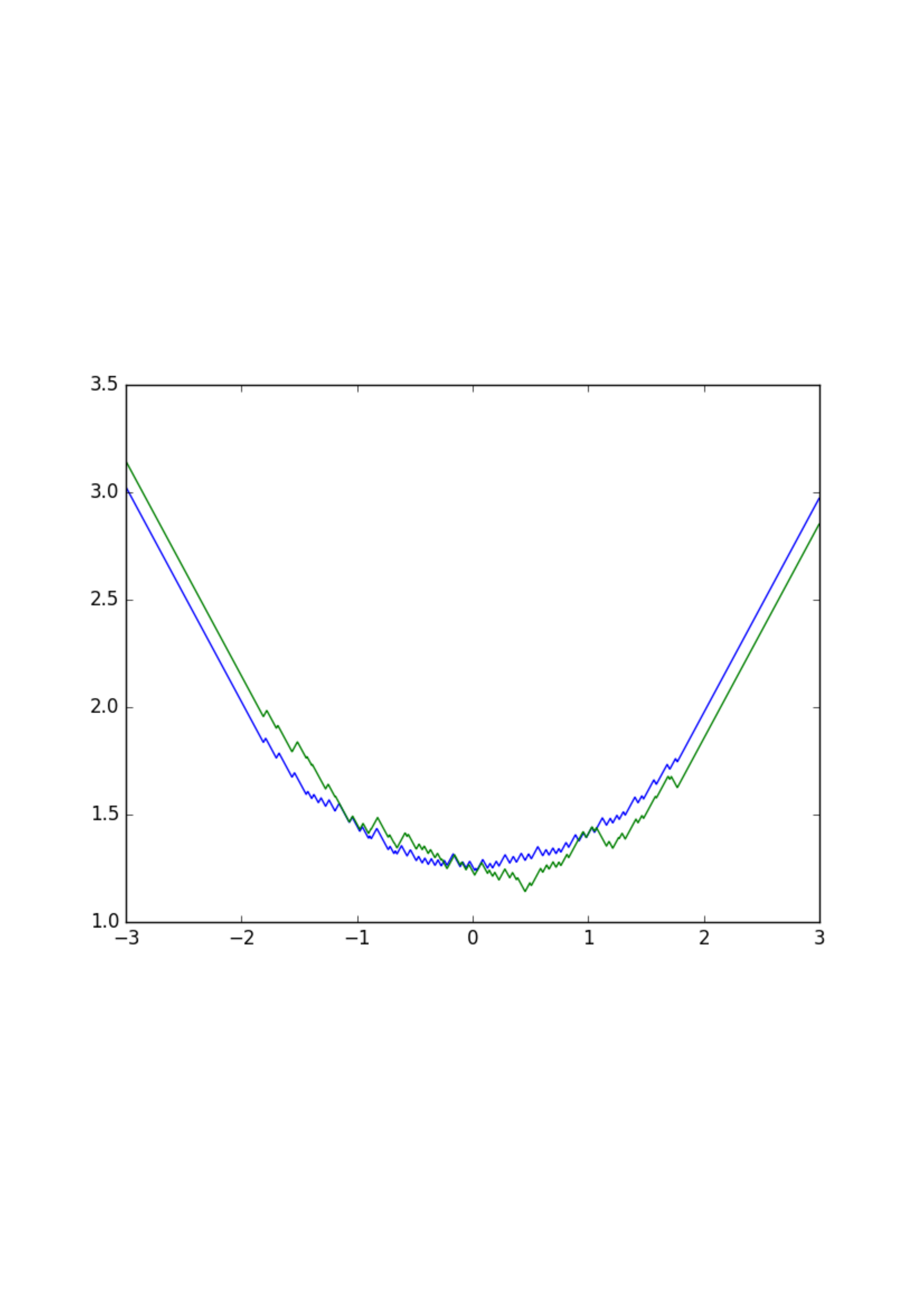}
\end{center}
\caption{The random continual diagrams (\ref{eq:es_glob}) in blue and (\ref{eq:es}) in green, for GUE$_{n=50}$. The former 
fluctuates on scale $\sim 1/n$, while the latter --- on scale $1/\sqrt{n}$.}
\end{figure}

\bigskip\noindent
Recently,  Fyodorov asked what are the properties of the critical points $\tilde\lambda_j^*$,
and in particular how to distinguish between them and $\tilde\lambda_j^{**}$. Differentiating the relations
(\ref{eq:es_glob})--(\ref{eq:es_glob'}) and comparing to (\ref{eq:es})--(\ref{eq:es'}), we
obtain an answer in the following form (stated here for GUE; the topology is as in Proposition~\ref{pi}):
\begin{align}
\label{eq:es_glob''}
&\sum_{j=1}^{n} \delta_{\tilde\lambda_j} - \sum_{j=1}^{n-1} \delta_{\tilde\lambda_j^{*}} \approx
\frac{dx}{\pi\sqrt{4-x^2}} + \frac{1}{2n} \left[ \frac{d^2}{dx^2} \Delta_\tr^\Mar (x)\right]dx
\\ \label{eq:es_loc''}
&\sum_{j=1}^{n} \delta_{\tilde\lambda_j} - \sum_{j=1}^{n-1} \delta_{\tilde\lambda_j^{**}} \approx
\frac{dx}{\pi\sqrt{4-x^2}} + \frac{1}{2\sqrt n} \left[\frac{d^2}{dx^2} \Delta_{1}^\Mar(x) \right]dx~;
\end{align}
note the difference in scaling, and that in the former, the Gaussian process is a derivative of a log-correlated process,
while in the latter, it is a derivative of (reparametrised) white noise (cf.\ (\ref{eq:lp_ito})).
The relation (\ref{eq:es_loc''}) was proved by Erd\H{o}s and Schr\"oder, in greater
generality (Wigner matrices) and stronger topology (corresponding to test functions in the Sobolev space $H^2[-10,10]$); they  used it to prove
(\ref{eq:es}). The relation (\ref{eq:es_glob''}) seems not to have been observed before.
The formul{\ae} (\ref{eq:es_glob''}) and (\ref{eq:es_loc''}) appear as (\ref{eq:diffcrit})
and (\ref{eq:diffsub}), respectively, in the body of the paper.

The comparison between the statistical properties of the critical points and the eigenvalues in the local regime (i.e. on scales 
comparable to the mean spacing) is discussed in the companion
paper \cite{me-crit}.

\bigskip\noindent
In Section~\ref{S:part}, we return to random partitions and to the theorems
 of Kerov and Ivanov--Olshanski, stated above as (\ref{eq:clt_kerov}) and (\ref{eq:clt_io}). We describe the
setting, and use Proposition~\ref{pi} to derive one from the other.

In Section~\ref{s:pfs} we outline a
proof of (\ref{eq:clt_io}) (and thus also of (\ref{eq:clt_kerov})) based on the combinatorial
approach of Biane \cite{Bi} and Okounkov \cite{Ok} in the version of \cite{JS}. Our goal is to emphasise
the similarity between the transition measure of a random diagram and the normalised
eigenvalue counting measure of a random matrix. To this end, we compare the Jucys--Murphy elements (see \ref{repr}) acting on a random representation of the symmetric group
with Wigner random matrices chosen from an ensemble (see \ref{unimod})
for which  a particularly clean version of the moment method is available
(cf.\ \cite{FeldS,me_surv} and references therein).
For this ensemble,
\begin{equation}\label{eq:joh_clt'}
 d\tilde\rho_n(x) \approx d\rho_\sc(x) + \frac{1}{n}  \sum_{k \geq 3} \frac{\sqrt{k}}2 g_k \, \frac{2 T_k(x/2)dx}{\pi\sqrt{4-x^2}}
 \end{equation}
 and accordingly (Proposition~\ref{unimod})
\begin{equation}\begin{split}
&\sum_{j=1}^n |x - \tilde\lambda_j|  -\sum_{j=1}^{n-1} |x - \tilde\lambda_j^{*}| \\
&\qquad\approx  \Omega_\LSVK(x) +
 \frac1n \sum_{k \geq 1} \frac{2\sqrt{k+2}}{k+1} g_{k+2} \frac{U_k(x/2)\sqrt{4-x^2}}{2\pi}~.
\end{split}\end{equation}
In the combinatorial approach, the coefficient $k$ (the square root of which appears
in (\ref{eq:joh_clt'})) acquires the interpretation as the number of ways to align
two cycles of length $k$. The coefficient $k-1$ in (\ref{eq:clt_io}) has exactly the
same combinatorial meaning (with cycles of length $k-1$).

\bigskip\noindent

Many of the results are probably familiar to experts; however, we hope to find a reader
that would enjoy seeing them under a single cover, rephrased in the peculiar argot
of spectral theory. We made no attempt to pursue the limits of the approach, and instead chose to
illustrate the main ideas in the simplest setting.

\section{Limit shape}\label{S:lim}

\subsection{Jacobi matrices}

\subsubsection{}\label{jac}

Let $J$ be a Jacobi matrix and let $J_n$ be its top-left $n \times n$
principal submatrix,
\begin{equation}\label{eq:defJ}
J = \left( \begin{array}{ccccc}
a_1 & b_1 & 0 & 0 &\cdots  \\
b_1 & a_2 & b_2 & 0 & \cdots \\
0 & b_2 & a_3 & b_3 & \cdots  \\
\cdots &\cdots&\cdots&\cdots&\cdots
\end{array}\right) \,\,\,
J_n = \left( \begin{array}{ccccc}
a_1 & b_1 & 0 & 0 &\cdots  \\
b_1 & a_2 & b_2 & 0 & \cdots \\
\cdots &\cdots&\cdots&\cdots&\cdots\\
0 & 0  & \cdots & b_{n-1} & a_n
\end{array}\right)\end{equation}
Define a probability measure $\mu_n$ by
\begin{equation}\label{eq:sp} \int x^k d\mu_n(x) = (J_n^k)_{nn} = \langle J_n^k \delta_n, \delta_n \rangle~, \quad k = 0, 1, 2, \cdots \end{equation}
It may be called the spectral measure of $J_n$ at $\delta_n$, or, following Kerov,
the \mbox{$n$-th} transition measure of $J$. It is supported on the eigenvalues of $J_n$; the
mass at an eigenvalue is equal to the squared $n$-th coordinate of the corresponding eigenvector.

Also define the normalised eigenvalue counting measure $\rho_n$ by
\begin{equation}\label{eq:ecm} \int x^k d\rho_n(x) = \frac{1}{n} \tr J_n^k~, \quad k = 0, 1, 2, \cdots \end{equation}
which has equal masses $1/n$ at the eigenvalues of $J_n$.

\subsubsection{}\label{limjac_1}

The asymptotics of $\mu_n$ (colloquially, the spectral measure at infinity) is determined by the asymptotics of $a_n$ and $b_n$. The
following proposition is folklore (cf.\  Kerov \cite{Kerov_sep}):
\begin{prop*} Let $J$ be a Jacobi matrix (\ref{eq:defJ}) such that
\begin{equation}\label{eq:regvar}
\lim_{n \to \infty} \frac{b_{n-1}}{b_n} = 1~, \quad \lim_{n \to \infty} \frac{a_{n-1}-a_n}{b_n} = 0~.\end{equation}
Then the sequence of measures $\widetilde\mu_n$ defined by
\[ \widetilde\mu_n(B) = \mu_n(b_{n-1} B + a_n)~, \quad B \in \mathcal B(\mathbb R)~, \]
converges weakly to the semicircle measure $\rho_\sc$,
\[ \rho_\sc(B) = \int\limits_{B \cap (-2, 2)} \frac1{2\pi} \sqrt{4-x^2}\, dx~.\]
\end{prop*}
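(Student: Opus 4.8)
The plan is to use the method of moments, exploiting that $\mu_n$ --- being supported on the finitely many eigenvalues of $J_n$ --- is compactly supported, so that $\widetilde\mu_n$ has finite moments of all orders. By the definition \eqref{eq:sp} and the substitution $x \mapsto (x-a_n)/b_{n-1}$,
\[
  \int x^k \, d\widetilde\mu_n(x) \;=\; \frac{1}{b_{n-1}^{\,k}} \, \big( (J_n - a_n I)^k \big)_{nn}~, \qquad k = 0, 1, 2, \dots~.
\]
The matrix $J_n - a_n I$ is tridiagonal, with off-diagonal entries $b_1, \dots, b_{n-1}$ and diagonal entries $a_1 - a_n, \dots, a_{n-1} - a_n, 0$. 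First I would expand $\big( (J_n - a_n I)^k \big)_{nn}$ as the sum, over nearest-neighbour paths of length $k$ in $\{1,\dots,n\}$ starting and ending at $n$, of the products of the corresponding matrix entries; for $n > k$ each such path stays inside $\{n-k,\dots,n\}$, so only the coefficients $b_{n-1},\dots,b_{n-k}$ and $a_{n-1}-a_n,\dots,a_{n-k}-a_n$ occur, and the edge vertex $n$ connects only to $n-1$.

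The key technical step is to observe that the hypothesis \eqref{eq:regvar}, which concerns only consecutive ratios, propagates to fixed offsets: for every fixed $j \ge 0$ one has $b_{n-j}/b_{n-1} \to 1$ (a product of $j-1$ ratios, each of the form $b_{l-1}/b_l \to 1$) and $(a_{n-j}-a_n)/b_{n-1} \to 0$ (write $a_{n-j}-a_n$ as a telescoping sum of the $j$ consecutive differences $a_m - a_{m+1}$ with $n-j \le m \le n-1$, each of which is $o(b_{n-1})$ by \eqref{eq:regvar} together with the ratio asymptotics just noted). Substituting this into the path expansion, the diagonal steps contribute a vanishing amount while each off-diagonal step contributes a factor tending to $1$, so that for each fixed $k$,
\[
  \int x^k \, d\widetilde\mu_n(x) \;\longrightarrow\; \#\big\{ \text{nearest-neighbour paths of length } k \text{ from } 0 \text{ to } 0 \text{ in } \{0,-1,-2,\dots\} \big\}~.
\]
This is precisely the $k$-th moment of the spectral measure at $\delta_1$ of the free half-line Jacobi matrix $J^{\mathrm{free}}$ (all $a_i = 0$, all $b_i = 1$), namely the Catalan number $C_m$ when $k = 2m$ and $0$ when $k$ is odd; equivalently, self-similarity of the continued fraction gives $m = 1/(-z - m)$, forcing the Weyl function of $J^{\mathrm{free}}$ to be $m(z) = (-z + \sqrt{z^2-4})/2 = \int (x-z)^{-1}\, d\rho_\sc(x)$, so that this spectral measure is $\rho_\sc$.

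It remains to upgrade moment convergence to weak convergence: $\rho_\sc$ is compactly supported, hence determined by its moments, and the convergence of the second moments of $\widetilde\mu_n$ to that of $\rho_\sc$ furnishes the tightness needed to conclude $\widetilde\mu_n \to \rho_\sc$ weakly. The only point that genuinely needs care is this localisation step --- that a bounded-length path rooted at the edge vertex $n$ sees only finitely many coefficients and that those, divided by $b_{n-1}$, converge to the coefficients of $J^{\mathrm{free}}$; this is exactly where \eqref{eq:regvar} enters, in the propagated form above, and everything else is bookkeeping. An essentially equivalent route bypasses moments: the Stieltjes transform of $\widetilde\mu_n$ is the finite continued fraction in the rescaled edge coefficients $(a_{n-j}-a_n)/b_{n-1}$ and $b_{n-j}/b_{n-1}$, and one checks, by the same propagation of \eqref{eq:regvar} and a contraction argument for continued fractions, that it converges locally uniformly off $[-2,2]$ to $(-z + \sqrt{z^2-4})/2$.
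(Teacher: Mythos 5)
Your proof is correct. The underlying idea is the same as the paper's --- after recentring by $a_n$ and rescaling by $b_{n-1}$, the matrix near its bottom-right corner looks like the free half-line Jacobi matrix $T$ (all $a_i=0$, $b_i=1$), whose spectral measure at the edge vector is $\rho_\sc$ --- but the implementation differs. The paper reflects and rescales $J_n$ to get an operator $\widetilde J_n$ on $\ell^2(\mathbb{Z}_{>0})$ with $\widetilde\mu_n$ as its spectral measure at $\delta_1$, argues that $\widetilde J_n \to T$ in strong operator topology, and identifies the spectral measure of $T$ at $\delta_1$ via the resolvent self-similarity $((T-z)^{-1})_{11} = (-z-((T-z)^{-1})_{11})^{-1}$; you instead run the method of moments, expanding $((J_n-a_nI)^k)_{nn}$ over nearest-neighbour paths rooted at the corner and letting the propagated form of (\ref{eq:regvar}) kill the diagonal steps and normalise the off-diagonal ones, arriving at the Catalan numbers. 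Your route has the virtue of making the crucial locality explicit: the $k$-th moment of $\widetilde\mu_n$ depends only on the $O(k)$ coefficients nearest the edge, which is exactly why a hypothesis on consecutive ratios suffices (and why one need not worry that the rescaled coefficients far from the corner, such as $b_1/b_{n-1}$, may blow up --- a point the paper's strong-convergence phrasing glosses over). The paper's route is shorter and avoids combinatorics, at the cost of leaving the passage from operator convergence to weak convergence of the edge spectral measures implicit. Your telescoping argument for $(a_{n-j}-a_n)/b_{n-1}\to 0$ and the moment-determinacy/tightness step at the end are both sound, so there is no gap.
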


\begin{proof} The measure $\widetilde{\mu}_n$  is the spectral measure of
$\widetilde{J}_n$ at $\delta_1$, where
\[ (\widetilde J_n)_{i\,j} = b_{n-1}^{-1} \left( (J_n)_{n+1-i\,n+1-j} - a_n \right)~. \]
As $n \to \infty$,
\[ b_{n-1}^{-1} (\widetilde{J}_n - a_n) \to T \]
in strong operator topology, where $T_{ij} = \delta_{i,j+1} + \delta_{i+1,j}$ is
the adjacency matrix of $\mathbb{Z}_{>0}$. The spectral measure
of $T$ at $\delta_1$ is exactly $\rho_\sc$, as one can see, for example,
from the relation
\[ ((T-z)^{-1})_{11} = (- z - ((T-z)^{-1})_{11})^{-1}~, \quad z \in \mathbb{C}\setminus\mathbb{R}~,\]
which follows from the formula for the top-left matrix element of a matrix inverse.
Therefore $\widetilde{\mu}_n \to \rho_\sc$.
\end{proof}

\subsubsection{}
The previous discussion remains valid for a sequence of Jacobi matrices
which are not necessarily  the sub-matrices of one infinite matrix. By the same
argument as above, we have:

\begin{prop*} Let $(J_n)_{n \geq 0}$ be sequence of finite
Jacobi matrices
\[J_n = \left( \begin{array}{ccccc}
a_{1,n} & b_{1,n} & 0 & 0 &\cdots  \\
b_{1,n} & a_{2,n} & b_{2,n} & 0 & \cdots \\
\cdots &\cdots&\cdots&\cdots&\cdots\\
0 & 0  & \cdots & b_{n-1,n} & a_{n,n}
\end{array}\right) \]
such  that for any $k \geq 1$
\begin{equation}\label{eq:regvar'}
\lim_{n \to \infty} \frac{b_{n-k-1,n}}{b_{n-1,n}} = 1~, \quad \lim_{n \to \infty} \frac{a_{n-k,n}-a_{n,n}}{b_{n-1,n}} = 0~.\end{equation}
Then the sequence of measures $\widetilde\mu_n(B) = \mu_n(b_{n-1,n} B + a_{n,n})$
converges weakly to the semicircle measure $\rho_\sc$.
\end{prop*}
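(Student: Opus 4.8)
The plan is to imitate the proof of the preceding proposition, replacing the strong-operator-topology argument (which is not directly available here, since the hypotheses (\ref{eq:regvar'}) only control the entries of $J_n$ near its bottom-right corner and give no uniform bound on the norm) by the method of moments, which only ever sees a fixed-size corner of the matrix.

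First I would realise $\widetilde\mu_n$ as a spectral measure at $\delta_1$. Exactly as in the previous proof, conjugating $J_n$ by the order-reversing permutation $i \mapsto n+1-i$ and applying the affine rescaling $x \mapsto b_{n-1,n}^{-1}(x - a_{n,n})$ turns $\mu_n$ (the spectral measure of $J_n$ at $\delta_n$) into the spectral measure $\widetilde\mu_n$ at $\delta_1$ of the finite Jacobi matrix $\widetilde J_n$ with entries
\[ (\widetilde J_n)_{ii} = \frac{a_{n+1-i,n} - a_{n,n}}{b_{n-1,n}}~, \qquad (\widetilde J_n)_{i,i+1} = (\widetilde J_n)_{i+1,i} = \frac{b_{n-i,n}}{b_{n-1,n}}~. \]
Taking $k = i-1$ in (\ref{eq:regvar'}), the hypotheses say precisely that for every fixed $i$ one has $(\widetilde J_n)_{ii} \to 0$ and $(\widetilde J_n)_{i,i+1} \to 1$ as $n \to \infty$; that is, each fixed matrix entry of $\widetilde J_n$ converges to the corresponding entry of the free Jacobi matrix $T$, $T_{ij} = \delta_{i,j+1} + \delta_{i+1,j}$ on $\mathbb{Z}_{>0}$.

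Next I would compute moments. Since $\widetilde J_n$ is tridiagonal, the $k$-th moment $\int x^k \, d\widetilde\mu_n(x) = (\widetilde J_n^k)_{11}$ is a finite sum over nearest-neighbour walks of length $k$ based at $1$, and for $n > k$ every such walk stays inside $\{1, \dots, k+1\}$; hence $(\widetilde J_n^k)_{11}$ is one fixed polynomial in the finitely many entries $(\widetilde J_n)_{ii}$, $(\widetilde J_n)_{i,i+1}$ with $i \le k+1$. By the entrywise convergence above, $(\widetilde J_n^k)_{11} \to (T^k)_{11}$ for every $k$, and $(T^k)_{11} = \int x^k \, d\rho_\sc(x)$ (the Catalan number $C_{k/2}$ for even $k$, and $0$ for odd $k$) — this is the same computation that identifies the spectral measure of $T$ at $\delta_1$ with $\rho_\sc$ in the proof of the previous proposition.

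Finally, since $\rho_\sc$ has compact support it is determined by its moments, so moment convergence of the probability measures $\widetilde\mu_n$ gives $\widetilde\mu_n \to \rho_\sc$ weakly (Fr\'echet--Shohat); alternatively, convergence of the second moments $\int x^2 \, d\widetilde\mu_n \to 1$ yields tightness, and any weak subsequential limit then has the moments of $\rho_\sc$ and hence equals it. The only point that needs care is the localisation step — that the $k$-th moment of $\widetilde\mu_n$ depends only on a corner of $\widetilde J_n$ whose size is controlled by $k$ — which is exactly what lets us dispense with a uniform bound on $\|\widetilde J_n\|$; everything else is bookkeeping parallel to the previous proof.
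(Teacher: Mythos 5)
Your proof is correct. The reduction is identical to the paper's: reverse the index order, recentre by $a_{n,n}$ and rescale by $b_{n-1,n}$, so that $\widetilde\mu_n$ becomes the spectral measure at $\delta_1$ of a Jacobi matrix $\widetilde J_n$ whose entries converge, for each fixed index, to those of the free Jacobi matrix $T$. The paper simply says ``by the same argument as above'', meaning the strong-operator-topology argument of the preceding proposition together with the resolvent identity $((T-z)^{-1})_{11}=(-z-((T-z)^{-1})_{11})^{-1}$ identifying the limit as $\rho_\sc$; you instead finish by the method of moments, observing that $(\widetilde J_n^k)_{11}$ is a fixed polynomial in the entries of a $(k+1)\times(k+1)$ corner, converges to $(T^k)_{11}$ (Catalan numbers), and invoking Fr\'echet--Shohat. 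Both endgames are valid. Your stated motivation for the switch --- that (\ref{eq:regvar'}) gives no control on $\|\widetilde J_n\|$ --- is a real feature of the hypotheses, but it does not actually obstruct the paper's route: convergence $\widetilde J_n\phi\to T\phi$ on the dense set of finitely supported vectors (which is all that entrywise convergence near the corner provides, and all that is needed) already implies strong resolvent convergence for self-adjoint operators without any uniform norm bound, and hence weak convergence of the spectral measures at $\delta_1$. What your version buys is self-containedness and locality: the $k$-th moment manifestly sees only a corner of size depending on $k$, so no functional-analytic limit theorem is needed; what it costs is the (routine) appeal to determinacy of $\rho_\sc$ by its moments.
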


\subsubsection{}\label{rhon}

The asymptotics of $\rho_n$ is also determined by the asymptotics of $a_n$ and $b_n$.
However, $\mu_n$
is insensitive to multiplication of the coefficients by a sequence $c_n$ such that
$\lim \frac{c_n}{c_{n-1}} = 1$, whereas $\rho_n$ depends on the growth of the coefficients. For example, we have

\begin{prop*} Suppose
\begin{equation}\label{eq:herm}
a_n = o(\sqrt{n})~, \quad b_n = \sqrt{n}(1 + o(1))~.
\end{equation}
Then
the rescaled measures
$\widetilde{\rho}_n(B) = \rho_n(\sqrt{n} B)$ converge weakly to $\rho_\sc$.
\end{prop*}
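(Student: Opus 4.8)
Since $\widetilde\rho_n(B)=\rho_n(\sqrt n B)$ is the normalised eigenvalue counting measure of $J_n/\sqrt n$, we have for every $k\ge 0$
\[
\int x^k\, d\widetilde\rho_n(x)=\frac{1}{n^{k/2}}\int x^k\, d\rho_n(x)=\frac{1}{n^{k/2+1}}\,\tr J_n^k~.
\]
The semicircle law $\rho_\sc$ is compactly supported, hence determined by its moments
$\int x^{2m}\,d\rho_\sc=C_m:=\tfrac1{m+1}\binom{2m}{m}$ and $\int x^{2m+1}\,d\rho_\sc=0$, and convergence of moments of probability measures to those of a determinate measure implies weak convergence. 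So the plan is to prove, for every $m\ge 0$,
\[
\frac{1}{n^{m+1}}\,\tr J_n^{2m}\ \longrightarrow\ C_m
\qquad\text{and}\qquad
\frac{1}{n^{m+3/2}}\,\tr J_n^{2m+1}\ \longrightarrow\ 0~,
\]
by the method of moments applied to $J_n$.

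The first step is the standard walk expansion: writing $\tr J_n^k=\sum_{i_1,\dots,i_k\in\{1,\dots,n\}}(J_n)_{i_1i_2}(J_n)_{i_2i_3}\cdots(J_n)_{i_ki_1}$ and using that $J_n$ is tridiagonal, only terms with $|i_{s+1}-i_s|\le 1$ for all $s$ (cyclically) survive; such a term equals $\prod_s(J_n)_{i_si_{s+1}}$, each factor being $a_{i_s}$ when $i_{s+1}=i_s$ (a \emph{loop}) and $b_{\min(i_s,i_{s+1})}$ when $|i_{s+1}-i_s|=1$. The second step is to discard all walks containing at least one loop. A closed walk of length $k$ based at $i$ visits only vertices within distance $k$ of $i$, so for $n$ large every $b$-factor is $\le C\sqrt{i+k}\le C'\sqrt i$, while a loop contributes some $a_j$ with $j$ near $i$; since $a_j=o(\sqrt j)$, splitting the sum over the base vertex $i$ at a cutoff $K$ (bounding the range $i\le K$ trivially, since the number of walk-shapes of length $k$ is a fixed constant) shows that the total contribution of walks with a loop is $o(n^{k/2+1})$. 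In particular, for odd $k$ every contributing walk has a loop, so the odd moments tend to $0$.

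For even $k=2m$ the surviving walks are the loopless (i.e.\ nearest-neighbour) closed walks of length $2m$. From an interior base vertex $i$ with $m+1\le i\le n-m$ there are exactly $\binom{2m}{m}$ of them, and each stays inside $[i-m,i+m]\subseteq\{1,\dots,n\}$; the $O(m)$ boundary vertices contribute only $O(n^m)=o(n^{m+1})$. For each such walk every edge is traversed an even number of times, so its weight equals $\prod_e (b_e^2)^{\mathrm{mult}_e/2}$ with $\sum_e \mathrm{mult}_e/2=m$; since $b_j^2=j(1+o(1))=i(1+o(1))$ for $j$ within distance $m$ of $i$, this weight is $i^m(1+\eta_i)$ with $\sup_{\text{walks}}|\eta_i|\to 0$ as $i\to\infty$. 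Summing over $i$ (again splitting at a cutoff $K$ to control $\sum_i i^m\eta_i$) gives
\[
\tr J_n^{2m}=\binom{2m}{m}\sum_{i=1}^n i^m\,(1+o(1))
=\frac{1}{m+1}\binom{2m}{m}\,n^{m+1}(1+o(1))=C_m\,n^{m+1}(1+o(1))~,
\]
which is the desired asymptotics.

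The computation is routine; the only points needing care are (i) the uniform control of the error terms, handled by the cutoff-at-$K$ argument above, using that there are only finitely many walk-shapes of each length so the errors do not accumulate; and (ii) the elementary identity $\binom{2m}{m}/(m+1)=C_m$, which is precisely what converts the naive arcsine-type count of nearest-neighbour loops into the Catalan moments of the semicircle, and which makes transparent why the growth $b_n=\sqrt n(1+o(1))$ (rather than bounded coefficients) is essential: without it $\rho_n$ would instead converge to the arcsine law.
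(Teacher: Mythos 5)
Your proof is correct, but it takes a genuinely different route from the paper. The paper reduces the statement to the classical Szeg\H{o} asymptotics for the zeros of Hermite polynomials (the case $a_n=0$, $b_n=\sqrt n$): it modifies the first $n_0$ coefficients, controls the effect of this finite-rank change on the counting function via interlacing (an error of at most $n_0/n$), and absorbs the remaining $o(\sqrt n)$ perturbation of the coefficients into an $\epsilon$-widening of intervals via a norm bound. Your argument is instead a self-contained moment computation: the closed-walk expansion of $\tr J_n^k$ for a tridiagonal matrix, elimination of walks containing diagonal steps using $a_j=o(\sqrt j)$, and the count $\binom{2m}{m}$ of nearest-neighbour loops at an interior base point, each of weight $i^m(1+o(1))$, which after summation over $i$ produces the Catalan numbers $\frac1{m+1}\binom{2m}{m}$. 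All the delicate points are handled correctly: the cutoff at $K$ to make the $o(\cdot)$ hypotheses uniform, the $O(n^m)$ boundary correction, the vanishing of odd moments (every odd closed walk must contain a loop), and the appeal to moment determinacy of the compactly supported limit. What the paper's route buys is brevity (modulo the classical reference) and robustness of the perturbative step; what yours buys is self-containedness, a transparent explanation of why the growth $b_n\sim\sqrt n$ produces the semicircle whereas bounded coefficients produce the arcsine law (Example in the following subsection), and a closer structural parallel with the combinatorial moment computations the paper carries out later for $H^{\mathrm{unif}}$ and for the Jucys--Murphy elements.
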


\begin{proof} Let $a_{n}^{\mathrm{H}}=  0$,
$b_n^{\mathrm{H}} = \sqrt{n}$. Here $\mathrm H$ stands for Hermite, and $\rho_n^{\mathrm H}$ is the normalised
zero counting measure of Hermite polynomials. It is known \cite{Sz} that $\widetilde{\rho}_n^{\mathrm H} \to \rho_\sc$. This can also be proved directly from the Jacobi
matrix as in \ref{limjac_1}.

To justify the approximation $\rho_n \approx \rho_n^{\mathrm H}$, let $\epsilon > 0$. Choose $n_0$ such that for $n > n_0$
\begin{equation}\label{eq:norm}
|a_n| < \epsilon \sqrt n~, \quad |b_n - \sqrt{n}| < \epsilon \sqrt n~.
\end{equation}
Let
\[ a_n^1 = \begin{cases} 0~, &n \leq n_0 \\ a_n~, &n > n_0 \end{cases}~, \quad
   \begin{cases} b_n^1 = \sqrt{n}~, &n \leq n_0 \\ b_n~, &n > n_0 \end{cases}~.\]
 Then, for any segment $[a, b]$,
 \[ \left| \widetilde\rho_n[a,b] - \widetilde\rho_n^1[a,b]\right| \leq \frac{n_0}{n} \]
 by the interlacing property of rank-one perturbation, and
\[
\widetilde\rho_n^{\mathrm H}[a+\epsilon,b-\epsilon] \leq \widetilde\rho_n^1[a,b] \leq \widetilde\rho_n^{\mathrm H}[a-\epsilon,b+\epsilon]~. \]
by (\ref{eq:norm}). It remains to let $n \to \infty$ and then $\epsilon \to +0$.\end{proof}

\subsubsection{}\label{arcsin}
In the case of Proposition~\ref{rhon} the sequences $\rho_n$ and $\mu_n$ share the same asymptotics. As emphasised by Kerov, this is an exception rather than a rule. The limit of the
former is a kind of integrated density of states, while the latter describes the spectral properties at infinity. Neither is directly related to the usual spectral measure at $\delta_1$.

\begin{ex*}
For $J = T$, $\rho_n \to \rho_{\mathrm{arcsin}}$, where
\[ \rho_{\mathrm{arcsin}}(B) = \int_{B \cap (-2,2)}\frac1\pi  \frac{dx}{\sqrt{4-x^2}}~.\]
The same conclusion holds for any Jacobi matrix with
\[ a_n = o(1)~, \quad b_n = 1 + o(1)~.\]
\end{ex*}

\subsection{Continual diagrams}

\subsubsection{}\label{contdiag} A continual diagram is a function $\omega: \mathbb{R} \to \mathbb{R}$
such that for some $a \in \mathbb{R}$
\begin{align}
&|\omega(x) - \omega(y)| \leq |x-y|~, \\
&\omega(x) = |x - a|
\quad \text{for sufficiently large $x$} \label{eq:sufflarge}
\end{align}

The collection of diagrams is equipped with the topology of uniform convergence. A diagram $\omega$ is said to be supported in a (closed) segment $I$ (denoted: $\omega \in \mathcal D(I)$) if
(\ref{eq:sufflarge}) holds for all $x \notin I$.

\subsubsection{}\label{hom}

Denote by $\mathcal M(I)$ the collection of Borel probability measures on $I$,
equipped with weak topology.

\begin{thm*}[Markov \cite{M1}; Akhiezer--Krein \cite{AK1}; Kerov \cite{Kerov_tr}] For any segment $I$, the relation
\[ \exp \left\{ - \frac12  \int \log(1- zx) d \omega'(x) \right\} = \int \frac{d\mu(x)}{1-zx} \]
defines a homeomorphism $\Mar:\mathcal{D}(I) \longleftrightarrow \mathcal{M}(I)$.
\end{thm*}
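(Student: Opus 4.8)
The plan is to establish the homeomorphism $\Mar:\mathcal D(I)\leftrightarrow\mathcal M(I)$ by first reducing to the identification of the two sides on a dense and explicit subclass, and then extending by continuity. First I would normalise: it suffices to treat $I=[-2,2]$ (or any fixed symmetric segment), since affine changes of variable conjugate the relation to itself up to the obvious reparametrisation of $z$. Next I would verify the claimed relation at the level of the finite (atomic) diagrams $\omega$ of the form \eqref{eq:omega} coming from interlacing sequences $(A,B)$ with $A,B\subset I$: for such $\omega$ one has $\omega'(x)=\sum_j \mathbbm 1_{[a_j,\infty)} - \sum_j\mathbbm 1_{[b_j,\infty)}$ up to a constant, so $-\tfrac12\int\log(1-zx)\,d\omega'(x)$ telescopes into $\log\prod_j(1-zb_j) - \log\prod_j(1-za_j)$ (after the substitution $x\mapsto 1/z$, i.e. using $\log(1-zx)=\log z + \log(1/z - x)$ and cancellation of the $\log z$ terms because there is one more $a$ than $b$), and then the simple-fraction decomposition \eqref{eq:pfd} identifies the exponential with $\sum_j p_j/(1-za_j)=\int d\mu/(1-zx)$ where $\mu=\sum p_j\delta_{a_j}$. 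This shows $\Mar$ sends finite diagrams to finitely-supported measures bijectively, consistently with \eqref{eq:pfd}.

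The second step is an approximation/compactness argument. The set of finite diagrams supported in $I$ is dense in $\mathcal D(I)$ for uniform convergence (approximate the measure $\tfrac12\,d\omega'$, a probability measure when $\omega$ is $1$-Lipschitz and equals $|x-a|$ outside $I$, weakly by atomic measures with rational weights, arranged to interlace), and the space $\mathcal D(I)$ is compact (uniformly bounded, uniformly Lipschitz, with prescribed behaviour outside $I$, so Arzelà--Ascoli applies). Likewise $\mathcal M(I)$ is compact in the weak topology. So it is enough to show that the defining relation, viewed as a map, is continuous in each direction on these compact spaces and that the finite-data identity above passes to the limit. Continuity of $\omega\mapsto\mu$: if $\omega_n\to\omega$ uniformly then $\omega_n'\to\omega'$ weakly-$*$ as measures on $I$, hence for each fixed $z$ in a neighbourhood of $0$ (where $|zx|<1$ on $I$ so $\log(1-zx)$ is bounded and continuous in $x$) the left-hand exponentials converge; thus the Cauchy/Stieltjes transforms $\int d\mu_n/(1-zx)$ converge pointwise near $z=0$, which forces $\mu_n\to\mu$ weakly by the standard moment/Stieltjes-transform argument on the compact set $I$. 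Continuity of the inverse follows symmetrically: $\mu_n\to\mu$ weakly gives convergence of the right-hand side near $z=0$, hence of the holomorphic function $F_n(z)=-\tfrac12\int\log(1-zx)\,d\omega_n'(x)$ near $z=0$; since each $F_n$ is determined by the moments $\int x^k\,d\omega_n'$ (the derivatives of $F_n$ at $0$ up to a factorial), these moments converge, and because $\omega_n'$ are probability measures on the fixed compact $I$ this yields $\omega_n'\to\omega'$ weakly, i.e. $\omega_n\to\omega$ uniformly. Injectivity overall is then automatic from injectivity on the dense set together with continuity, once one knows a limit of finite diagrams is determined by its Markov transform — which is precisely the uniqueness in the Markov moment problem, that I would either cite from \cite{M1,AK_book,KN} or re-derive from the fact that $F$ near $0$ determines all moments of $\omega'$ and a compactly supported measure is determined by its moments.

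The main obstacle, and the only genuinely non-formal point, is the well-definedness and bijectivity on the limiting (non-atomic) side: one must check that for \emph{every} continual diagram $\omega\in\mathcal D(I)$ the function $F(z)=-\tfrac12\int\log(1-zx)\,d\omega'(x)$, a priori defined and holomorphic only for $|z|$ small, has $e^{F(z)}$ equal to the Cauchy transform of an \emph{honest probability measure} on $I$ — i.e. that $e^{F}$ is a Nevanlinna/Herglotz function of the right normalisation with spectral measure supported in $I$ — and conversely that every $\mu\in\mathcal M(I)$ arises. This is exactly the content of the Markov correspondence / solution of the truncated moment problem; I would obtain it by passing to the limit in the finite case (the class of Herglotz functions with spectral measure in $I$ and the correct asymptotics at $\infty$ is closed under the relevant limits) rather than by an independent complex-analytic construction, so that the compactness of Step~2 does the real work. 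Everything else — the telescoping computation, Arzelà--Ascoli, weak-$*$ continuity of $\omega\mapsto\omega'$, and the Stieltjes-transform characterisation of weak convergence on a compact set — is routine.
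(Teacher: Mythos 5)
The paper does not actually prove this theorem: it is quoted with attributions to Markov, Akhiezer--Krein and Kerov, so there is no in-text argument to compare against. Your architecture --- verify the identity on rectangular diagrams via the partial-fraction decomposition (\ref{eq:pfd}), then extend using density of finite data, compactness of $\mathcal D(I)$ (Arzel\`a--Ascoli) and of $\mathcal M(I)$, continuity of the transform for $z$ near $0$, and the identification of surjectivity/injectivity with the Markov moment problem --- is exactly the classical route, essentially Kerov's. The telescoping computation and the continuity of $\omega \mapsto \mu$ are fine.

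The one genuine gap is your recurring treatment of $\tfrac12\, d\omega'$ as a probability measure. It is not: already for a rectangular diagram one has $d\omega' = 2\sum_j \delta_{a_j} - 2\sum_j \delta_{b_j}$, a signed measure whose total variation grows with the number of points, and for a general $1$-Lipschitz $\omega$ the derivative $\omega'$ need not even have bounded variation, so $d\omega'$ is a priori only a distribution and the defining integral must itself be interpreted via integration by parts (as the paper does in Lemma~\ref{fluct}). This breaks two of your steps as written: the density argument (``approximate the probability measure $\tfrac12\,d\omega'$ weakly by atoms'') and the inverse-continuity step (``the moments converge and the $\omega_n'$ are probability measures on a compact set, hence converge weakly''), since convergence of moments of signed measures with unbounded variation yields nothing. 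Both are repairable by working with $\omega$ rather than $\omega'$: rectangular diagrams supported in $I$ are dense because any $1$-Lipschitz function equal to $|x-a|$ off $I$ is a uniform limit of piecewise-linear functions with slopes $\pm 1$; and convergence of the analytic functions $F_n$ near $z=0$ gives, after integrating by parts twice, $\int x^k \omega_n(x)\,dx \to \int x^k \omega(x)\,dx$ for all $k$, which combined with the uniform Lipschitz bound and Arzel\`a--Ascoli upgrades to uniform convergence. Note also that a continuous bijection between compact Hausdorff spaces is automatically a homeomorphism, so once injectivity, surjectivity and one direction of continuity are in place, the separate argument for continuity of the inverse is not needed.
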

The homeomorphism $\Mar$ is called the Markov transform.
In the language of what is now called the Markov moment problem, the construction of the bijection
$\Mar$  for the case of a segment goes back to Markov
\cite{M1}. It was developed and generalised by Akhiezer and Krein in the 1930-s,
who published a series of papers \cite{AK0,AK0.5,AK1,AK2} and a book \cite{AK_book}
on this subject.
The formulation in the language of continual diagrams is due to Kerov \cite{Kerov_tr}, who also observed that $\Mar$ is a homeomorphism;
see further \cite{Kerov_book}.

\begin{ex*} The Logan--Shepp--Vershik--Kerov diagram
\[ \Omega_\LSVK(x) = \begin{cases}
\frac2\pi \left(x \arcsin\frac{x}{2} + \sqrt{4-x^2} \right)~, & |x| \leq 2 \\
|x|~, & |x| > 2
\end{cases}~, \]
corresponds to the semicircle: $\Mar \, \Omega_\LSVK = \rho_\sc$.
\end{ex*}

\subsubsection{}\label{jacdiag}

Let $J$ be a Jacobi matrix. Denote by $\lambda_j^{(n)}$ the eigenvalues of $J_n$,
and define the diagram $\omega_n$ corresponding to the interlacing sequences
$(\lambda_j^{(n)})$ and $(\lambda_j^{(n-1)})$ via (\ref{eq:omega}), i.e.\ as a continuous function such that
\begin{empheq}[left = \empheqlbrace]{align}
\omega_n'(x) &= -1 + 2\, \# \left\{j \, , \, \lambda_j^{(n)} \leq x \right\} - 2\,  \# \left\{j \, , \, \lambda_j^{(n-1)} \leq x \right\} \nonumber\\
 \omega_n(x) &= |x - \sum_{j=1}^{n} \lambda_j^{(n)} + \sum_{j=1}^{n-1} \lambda_j^{(n-1)} |
\quad \text{for sufficiently large $x$}.\nonumber
\end{empheq}
Equivalently, if $P_n(z) = \det (z - J_n)$,
\[ \omega_n'(x) = \operatorname{sign} \frac{P_{n-1}(x)}{P_n(x)}~. \]
Also define a diagram $\varpi_n$ such that
\[ \varpi_n'(x) = \operatorname{sign} \frac{P_{n}'(x)}{P_n(x)}
 = \operatorname{sign} \frac1{n} \frac{P_{n}'(x)}{P_n(x)}~. \]

\begin{lemma*} $\Mar  \omega_n = \mu_n$ and $\Mar \varpi_n = \rho_n$.
\end{lemma*}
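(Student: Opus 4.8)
The key point is that the defining relation of the Markov transform, written as in \ref{hom}, is exactly the generating-function identity for the simple fraction decomposition (\ref{eq:pfd}) after the change of variable $z \mapsto 1/z$ and a rescaling. So the plan is to verify that $\omega_n$ and $\varpi_n$, as defined here through sign conditions on logarithmic derivatives, are precisely the diagrams (\ref{eq:omega}) attached to the relevant interlacing pairs, and then to compute the right-hand side of the Markov relation directly from the product formulas for $P_n$, $P_{n-1}$ and $P_n'$.

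First I would record that $\omega_n$ is the diagram of the interlacing pair $\big((\lambda_j^{(n)}), (\lambda_j^{(n-1)})\big)$: the eigenvalues of $J_n$ and $J_{n-1}$ interlace by the standard rank-one (or minor) interlacing for Jacobi matrices, the derivative condition $\omega_n' = \operatorname{sign} (P_{n-1}/P_n)$ matches the jump structure in (\ref{eq:omega}) since $P_n(z) = \prod_j (z - \lambda_j^{(n)})$ and $P_{n-1}(z) = \prod_j (z - \lambda_j^{(n-1)})$, and the normalisation at $+\infty$ agrees because $\sum \lambda_j^{(n)} - \sum \lambda_j^{(n-1)} = \tr J_n - \tr J_{n-1} = a_n$ fixes the additive constant. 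Similarly $\varpi_n$ is the diagram of $\big((\lambda_j^{(n)}), (\lambda_j^{*}))$, where $\lambda_j^*$ are the zeros of $P_n'$; these interlace the $\lambda_j^{(n)}$ by Rolle's theorem (strictly, since the eigenvalues of a Jacobi matrix are simple), and $\varpi_n' = \operatorname{sign}(P_n'/P_n)$ is the correct jump function.

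Next I would plug into the Markov relation. For $\omega_n$: since $\omega_n'$ is a sum of $\pm\delta$-type jumps, $-\tfrac12\int \log(1-zx)\,d\omega_n'(x)$ telescopes into $\sum_{j} \log(1 - z\lambda_j^{(n-1)}) - \sum_j \log(1-z\lambda_j^{(n)})$ up to the boundary terms, i.e.\ the exponential equals $\prod_j (1-z\lambda_j^{(n-1)}) / \prod_j (1-z\lambda_j^{(n)})$. Setting $z = 1/w$ and clearing powers of $w$ turns this into $P_{n-1}(w)/P_n(w)$, whose partial fraction expansion $\sum_j p_j/(w-\lambda_j^{(n)})$ has $p_j = P_{n-1}(\lambda_j^{(n)})/P_n'(\lambda_j^{(n)})$. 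By the classical formula for the resolvent of a Jacobi matrix (the top-left, equivalently bottom-right, entry of $(w - J_n)^{-1}$ equals $P_{n-1}(w)/P_n(w)$), this expansion is exactly $\int d\mu_n(x)/(w-x)$ with $\mu_n$ as in (\ref{eq:sp}); reverting $w = 1/z$ gives $\int d\mu_n(x)/(1-zx)$, which is the right-hand side of the Markov relation. Hence $\Mar\,\omega_n = \mu_n$. For $\varpi_n$ the same computation with $P_{n-1}$ replaced by $\tfrac1n P_n'$ gives the exponential $= \tfrac1n P_n'(w)/P_n(w) = \sum_j (1/n)/(w - \lambda_j^{(n)}) = \int d\rho_n(x)/(w-x)$ after the substitution, so $\Mar\,\varpi_n = \rho_n$.

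The only genuine content — everything else is bookkeeping of signs and the change of variable $z\leftrightarrow 1/z$ — is the identification of $P_{n-1}(w)/P_n(w)$ with the resolvent matrix element $\langle (w-J_n)^{-1}\delta_n,\delta_n\rangle$ and, dually, of $\tfrac1n P_n'(w)/P_n(w)$ with $\tfrac1n\tr(w-J_n)^{-1}$. The second is just logarithmic differentiation of $P_n(w) = \prod_j(w-\lambda_j^{(n)}) = \det(w - J_n)$. The first is the standard cofactor expansion of the inverse of a tridiagonal matrix: the $(n,n)$ cofactor of $w - J_n$ is $\det(w - J_{n-1}) = P_{n-1}(w)$. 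I would expect the main obstacle to be purely presentational: keeping the boundary terms at $\pm\infty$ and the normalising constants straight so that the exponentials come out as honest ratios of monic polynomials rather than off by a constant, and making sure the interlacing (needed for $\Mar$ to even be defined on these diagrams, via \ref{hom}) is invoked correctly in both cases.
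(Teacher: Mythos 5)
Your proposal is correct and follows essentially the same route as the paper, which simply writes $P_{n-1}/P_n$ and $\frac1n P_n'/P_n$ as sums of simple fractions, identifies them with the Stieltjes transforms of $\mu_n$ and $\rho_n$, and invokes the bijection of Theorem~\ref{hom}. The extra details you supply (the cofactor identity $\langle (w-J_n)^{-1}\delta_n,\delta_n\rangle = P_{n-1}(w)/P_n(w)$, the logarithmic derivative for the trace, and the bookkeeping under $z\mapsto 1/w$) are exactly the steps the paper leaves implicit.
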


\begin{proof} Representing $P_{n-1}/P_n$ and $P_n'/P_n$ as a sum
of  simple fractions, we obtain:
\[ \frac{P_{n-1}(z)}{P_n(z)} = \int \frac{d\mu_n(x)}{z-x}~, \quad
\frac{1}{n}\frac{P_{n}'(z)}{P_n(z)} = \int \frac{d\rho_n(x)}{z-x}\]
and then use the bijection $\mathfrak M$ from Theorem~\ref{hom}.
\end{proof}

As noted by Kerov (e.g.\ \cite[Section 6]{Kerov_interl}), this lemma is a finite-dimensional
trace formula (the study of trace formul{\ae} goes back to the works of Lifshits \cite{Lif}
 and Krein \cite{Krein_trace}, see further the survey of Birman and Yafaev \cite{BY}).

\subsubsection{}\label{limjac_2}

\begin{cor*}[Kerov \cite{Kerov_sep}] Let $J$ be a Jacobi matrix (\ref{eq:defJ}) satisfying (\ref{eq:regvar}).
Then
\[ b_{n-1}^{-1} \omega_n(b_{n-1} x + a_n) \to \Omega_\LSVK(x) \]
uniformly in $x$.
\end{cor*}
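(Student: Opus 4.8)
The plan is to transport the weak convergence $\widetilde\mu_n\to\rho_\sc$ through the Markov correspondence. Put $\widetilde\omega_n(x)=b_{n-1}^{-1}\omega_n(b_{n-1}x+a_n)$ and $\widehat J_n=b_{n-1}^{-1}(J_n-a_n)$, so that by (\ref{eq:omega}) the diagram $\widetilde\omega_n$ corresponds to the interlacing pair consisting of the eigenvalues of $\widehat J_n$ and those of its top-left $(n-1)\times(n-1)$ corner $\widehat J_{n-1}$. First I would record that $\Mar$ is equivariant under affine changes of variable: substituting $x\mapsto tx+s$ into the identity of Theorem~\ref{hom} sends $\Mar\omega$ to the pushforward of $\Mar\omega$ under $y\mapsto(y-s)/t$. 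Together with $\Mar\omega_n=\mu_n$ (the Lemma in \ref{jacdiag}) this gives $\Mar\widetilde\omega_n=\widetilde\mu_n$, and the Proposition in \ref{limjac_1} says $\widetilde\mu_n\to\rho_\sc=\Mar\,\Omega_\LSVK$ weakly. If all the $\widetilde\mu_n$ were supported in one fixed segment $I$, the corollary would be immediate, since $\Mar\colon\mathcal D(I)\to\mathcal M(I)$ is a homeomorphism. The point that needs care is that this need not hold: the interval outside which $\widetilde\omega_n(x)=|x|$, namely the rescaled spectrum of $J_n$, may be unbounded (for instance when $b_n=1/\sqrt n$, $a_n\equiv 0$).

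The remedy is a uniform tail estimate. Since $\tfrac12\widetilde\omega_n''=\sum_j\delta_{\lambda_j(\widehat J_n)}-\sum_j\delta_{\lambda_j(\widehat J_{n-1})}$, the constant $c_n$ with $\widetilde\omega_n(x)=|x-c_n|$ for large $|x|$ equals $\tfrac12\int x\,d\widetilde\omega_n''(x)=\tr\widehat J_n-\tr\widehat J_{n-1}=(\widehat J_n)_{nn}=0$. Hence for $x>0$ one has $\widetilde\omega_n(x)-x=\int_x^\infty(1-\widetilde\omega_n'(t))\,dt\ge 0$ with nonnegative integrand, so $t\mapsto\widetilde\omega_n(t)-t$ is nonincreasing on $(0,\infty)$, which gives $x\bigl(\widetilde\omega_n(x)-x\bigr)\le\int_0^x\bigl(\widetilde\omega_n(t)-t\bigr)\,dt\le\int_{\mathbb R}\bigl(\widetilde\omega_n(t)-|t|\bigr)\,dt$ (and symmetrically for $x<0$). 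By Fubini and one integration by parts (using $c_n=0$) the last integral equals $\int x^2\,d(\tfrac12\widetilde\omega_n'')(x)=\tr\widehat J_n^2-\tr\widehat J_{n-1}^2$, and since this difference picks out exactly the entries involving the index $n$ it is $(\widehat J_n)_{nn}^2+2(\widehat J_n)_{n,n-1}^2=0+2=2$ ($\widehat J_n$ being tridiagonal with $(\widehat J_n)_{n,n-1}=b_{n-1}/b_{n-1}=1$). Therefore
\[0\le\widetilde\omega_n(x)-|x|\le\frac{2}{|x|}\qquad(x\ne 0,\ n\ge 1),\]
so the $\widetilde\omega_n$ are uniformly pinned to $|x|$ — hence to $\Omega_\LSVK(x)=|x|$ — outside any fixed neighbourhood of $[-2,2]$, and are uniformly bounded (being $1$-Lipschitz).

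This reduces matters to the compact part: given $\varepsilon>0$, the estimate yields $|\widetilde\omega_n(x)-\Omega_\LSVK(x)|\le\varepsilon$ for all $n$ whenever $|x|\ge 2/\varepsilon$, so it is enough to prove $\widetilde\omega_n\to\Omega_\LSVK$ locally uniformly. I would do this by a compactness argument: being $1$-Lipschitz and pointwise bounded, the $\widetilde\omega_n$ have along every subsequence a locally uniform limit $\omega_\ast$, which is $1$-Lipschitz with $\omega_\ast(x)-|x|\to 0$ as $|x|\to\infty$. Passing to the limit along that subsequence in the identity of Theorem~\ref{hom}, for $z$ in a small disc about $0$ with $\operatorname{Im}z\ne 0$ — legitimate because the right-hand side is a bounded continuous functional of the measure (hence converges, by $\widetilde\mu_n\to\rho_\sc$), while on the left $\int\log(1-zx)\,d\widetilde\omega_n'(x)=\int z(1-zx)^{-1}\bigl(\widetilde\omega_n'(x)-\operatorname{sign}x\bigr)\,dx$ has uniformly small tails by $\int_{|x|>R}|\widetilde\omega_n'-\operatorname{sign}|\le 4/R$ and a convergent bulk by weak-$*$ convergence $\widetilde\omega_n'\to\omega_\ast'$ — one finds that $\omega_\ast$ satisfies the Markov relation with $\rho_\sc$, whence $\omega_\ast=\Omega_\LSVK$ by uniqueness of the solution. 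Thus $\widetilde\omega_n\to\Omega_\LSVK$ locally uniformly, and combined with the tail estimate, uniformly on $\mathbb R$.

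The step I expect to be the real obstacle is precisely this transfer from weak convergence of the $\widetilde\mu_n$ to uniform convergence of the $\widetilde\omega_n$ when the supports are not uniformly bounded; it rests entirely on the identity $\int_{\mathbb R}(\widetilde\omega_n-|x|)\,dx=\tr\widehat J_n^2-\tr\widehat J_{n-1}^2=2$, which turns the soft homeomorphism of Theorem~\ref{hom} into an effective tail bound. (If in a given instance one can check directly that $b_{n-1}^{-1}(\|J_n\|+|a_n|)$ stays bounded — as it does in the examples of interest — then all the $\widetilde\omega_n$ lie in a single $\mathcal D(I)$ and the last two paragraphs collapse to a one-line appeal to Theorem~\ref{hom}.)
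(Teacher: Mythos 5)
Your argument is correct and its core is the same as the paper's: the paper's entire proof reads ``Follows from Proposition~\ref{limjac_1}, Lemma~\ref{jacdiag} and Theorem~\ref{hom}'', i.e.\ exactly your first paragraph (identify $\Mar\widetilde\omega_n=\widetilde\mu_n$, use $\widetilde\mu_n\to\rho_\sc$, and invoke the homeomorphism). What you add is the observation that Theorem~\ref{hom} is stated only for a fixed segment $I$, whereas under (\ref{eq:regvar}) alone the rescaled spectra need not be uniformly bounded (e.g.\ $b_n=e^{-\sqrt n}$ satisfies $b_{n-1}/b_n\to1$ while $\|\widehat J_n\|\to\infty$), so the one-line appeal is not literally sufficient; your repair via the identity $\int_{\mathbb R}(\widetilde\omega_n(x)-|x|)\,dx=\tr\widehat J_n^2-\tr\widehat J_{n-1}^2=2$, yielding the uniform tail bound $0\le\widetilde\omega_n(x)-|x|\le 2/|x|$, is correct (I checked the centring $c_n=(\widehat J_n)_{nn}=0$, the monotonicity of $x\mapsto\widetilde\omega_n(x)-x$, and the second-moment computation) and is a genuinely useful quantitative supplement that the paper leaves implicit in the word ``folklore''. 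The paper's version buys brevity by treating the compactly-supported case as the only one of interest; yours buys an honest proof under the stated hypotheses, at the cost of the Arzel\`a--Ascoli passage to the limit in the Markov relation, which you also carry out correctly.
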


\begin{proof} Follows from Proposition~\ref{limjac_1}, Lemma~\ref{jacdiag} and Theorem~\ref{hom}.
\end{proof}

\subsubsection{}\label{limjac_3}

The corresponding statement for $\varpi_n = \Mar^{-1} \rho_n$ is much less general.

\begin{cor*}[Kerov \cite{Kerov_sep}] Let $J$ be a Jacobi matrix (\ref{eq:defJ}) satisfying (\ref{eq:herm}).
Then
\[ \frac{1}{\sqrt{n}} \varpi_n(\sqrt{n}x ) \to \Omega_\LSVK(x) \]
uniformly in $x$.
\end{cor*}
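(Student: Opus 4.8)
The plan is to imitate the proof of Corollary~\ref{limjac_2}: convert the weak convergence $\widetilde\rho_n\to\rho_\sc$ supplied by Proposition~\ref{rhon} into uniform convergence of the associated continual diagrams, using that $\Mar$ is a homeomorphism. The one structural fact I would record first is the scaling covariance of the Markov transform. If $\Mar\omega=\mu$ and, for $c>0$, one sets $\omega_c(x)=c^{-1}\omega(cx)$ and lets $\mu_c$ be the pushforward of $\mu$ under $x\mapsto x/c$ (so $\mu_c(B)=\mu(cB)$), then $\Mar\omega_c=\mu_c$. This is immediate from the defining relation of Theorem~\ref{hom}: the Stieltjes measure $\tfrac12 d\omega_c'$ is the pushforward of $\tfrac12 d\omega'$ under $x\mapsto x/c$, so the left-hand side of that relation for $\omega_c$ equals $\exp\{-\tfrac12\int\log(1-(z/c)x)\,d\omega'(x)\}=\int\frac{d\mu(x)}{1-(z/c)x}=\int\frac{d\mu_c(x)}{1-zx}$; no spurious constant appears because $z$ enters only through the product $zx$, which is exactly what the normalisation $1-zx$ is designed for.

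Next I would apply this with $\omega=\varpi_n$ (so $\Mar\varpi_n=\rho_n$ by Lemma~\ref{jacdiag}) and $c=\sqrt n$: the rescaled diagram $x\mapsto \frac1{\sqrt n}\varpi_n(\sqrt n x)$ is precisely $\Mar^{-1}$ of the rescaled measure $\widetilde\rho_n$, $\widetilde\rho_n(B)=\rho_n(\sqrt n B)$. To invoke Theorem~\ref{hom} I must confine everything to one compact segment. By (\ref{eq:herm}), $\|J_n\|\le \max_{j\le n}|a_j|+2\max_{j\le n}|b_j|=(2+o(1))\sqrt n$, hence for $n$ large all eigenvalues of $\widetilde J_n:=n^{-1/2}J_n$ lie in $I:=[-3,3]$; so $\widetilde\rho_n\in\mathcal M(I)$, and since the zeros of $P_n'$ interlace those of $P_n$ one also has $\frac1{\sqrt n}\varpi_n(\sqrt n\cdot)\in\mathcal D(I)$, while $\Mar\,\Omega_\LSVK=\rho_\sc\in\mathcal M(I)$ by the Example in \ref{hom}. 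Finally, Proposition~\ref{rhon} gives $\widetilde\rho_n\to\rho_\sc$ weakly, and the continuity of $\Mar^{-1}:\mathcal M(I)\to\mathcal D(I)$ (Theorem~\ref{hom}, with the uniform topology on $\mathcal D(I)$) yields $\frac1{\sqrt n}\varpi_n(\sqrt n\cdot)=\Mar^{-1}\widetilde\rho_n\to\Mar^{-1}\rho_\sc=\Omega_\LSVK$ uniformly in $x$.

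The argument is entirely soft, so there is no genuine obstacle; the only points demanding a moment of care are the scaling covariance of $\Mar$ (trivial with the $1-zx$ normalisation, but worth stating once and for all, since it also underlies the introduction's rescalings) and the verification that the measures $\widetilde\rho_n$ and the limit $\rho_\sc$ all sit inside a single fixed segment $I$, so that the homeomorphism statement of Theorem~\ref{hom} applies verbatim. Contrast with Corollary~\ref{limjac_3}'s counterpart: the analogue of this uniform-support bookkeeping is the content that makes the present statement strictly less general than Corollary~\ref{limjac_2}, because $\rho_n$ — unlike $\mu_n$ — is sensitive to the actual growth rate $b_n\sim\sqrt n$ rather than merely to $b_{n-1}/b_n\to1$.
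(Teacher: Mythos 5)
Your proof is correct and follows exactly the route the paper takes: the paper's own proof of this corollary is the one-line citation of Proposition~\ref{rhon}, Lemma~\ref{jacdiag} and Theorem~\ref{hom}, and your write-up simply supplies the (correct) details --- the scaling covariance of $\Mar$ and the confinement of all measures and diagrams to a single segment --- that the paper leaves implicit.
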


\begin{proof} Follows from Proposition~\ref{rhon}, Lemma~\ref{jacdiag} and Theorem~\ref{hom}.
\end{proof}

\subsubsection{}\label{ed}

Introduce the rescaling operators
\[ (\Resc_L \mu)(B) = \mu(L B) \quad (B \in \mathcal B(\mathbb R))~, \quad (\Resc_L \omega)(x) = \frac1L \omega(Lx)~.\]

Consider the following random Jacobi matrix,
constructed by Dumitriu and Edelman \cite{DE}.
Fix $\beta > 0$, and let $J^\beta$ be such that
\[ a_n^{\beta} \sim N(0, 2/\beta)~, \quad
\beta \, b_n^\beta \sim \chi_{n\beta}~,\]
 and these random variables are jointly independent.
Recall that the $\chi_a$ distribution is defined by the density
\[ p_a(x) = \frac{2^{1-a/2} x^{a-1} e^{-x^2/2}}{\Gamma(a/2)}~. \]
We have almost surely:
\[ a_n^\beta = O(\sqrt{\log n})~, \, b_n^\beta = \sqrt{ n}(1 + o(1))~.  \]
Therefore by Propositions~\ref{limjac_1} and \ref{rhon}
\[ \Resc_{\sqrt { n}} \mu_n^\beta  \to \rho_\sc~, \quad
\Resc_{\sqrt { n}} \rho_n^\beta  \to \rho_\sc \quad \text{weakly,}\]
and by Corollaries~\ref{limjac_2} and \ref{limjac_3}
\[ \Resc_{\sqrt{ n}} \omega_n^\beta \to \Omega_\LSVK(x)~,\quad
\Resc_{\sqrt{ n}} \varpi_n^\beta  \to \Omega_\LSVK(x) \quad
\text{uniformly,}\]
 almost surely.

 \begin{rmk*} These statements are not directly related to the spectral properties
 of $J^\beta$, see \cite{BFS} and \cite{Br} for the properties of the latter and a
 discussion.
 \end{rmk*}

\subsection{Random matrices}

\subsubsection{}\label{guedef} The Gaussian Unitary Ensemble (GUE)
is the ensemble of semi-infinite Hermitian matrices $H = (H(i,j))_{i,j\geq1}$ such that for
any $n$ the top-left $n \times n$ submatrix $H_{n}$ has the
 probability density
\[ Z_{n}^{-1} \exp \left\{ - \frac12 \tr H_n^2 \right\}  \]
with respect to the Lebesgue measure on the space of $n \times n$
Hermitian matrices.

Let  $\lambda_j^{(n)}$ be the eigenvalues of $H_{n}$, and let $P_n(z) = \det(z-H_{n})$.
Define the spectral measure $\mu_n^{\mathrm{GUE}}$ and the normalised eigenvalue distribution $\rho_n^{\mathrm{GUE}}$
by the formul{\ae}
\[ \int x^k d\mu_n^{\mathrm{GUE}} = (H_n^k)_{nn}~, \quad \int x^k d\rho_n^{\mathrm{GUE}} = \frac{1}{n} \tr H_n^k~, \quad k=0,1,2,\cdots\]
Also define the diagrams $\omega_n^{\mathrm{GUE}}$ and $\varpi_n^{\mathrm{GUE}}$ by
\[ \frac{d}{dx} {\omega_n^{\mathrm{GUE}}}(x) = \operatorname{sign} \frac{P_{n-1}(x)}{P_n(x)}~, \quad
\frac{d}{dx} {\varpi_n^{\mathrm{GUE}}}(x) = \operatorname{sign} \frac{P_{n}'(x)}{P_n(x)}~. \]

\subsubsection{}\label{de2}

\begin{thm*}[Dumitriu--Edelman \cite{DE}] The joint distribution of 
\[ (\lambda_j^{(n)})_{j=1}^n \quad \text{and} \quad (\lambda_j^{(n-1)})_{j=1}^{n-1} \]
for the GUE coincides with the joint distribution of the same quantities for
 $J^{\beta=2}$ from \ref{ed}.
\end{thm*}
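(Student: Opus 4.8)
The plan is to derive the statement from the Householder tridiagonalisation of the GUE matrix, keeping track not only of the resulting Jacobi entries but also of the nested principal-submatrix structure. Write $H_n$ in block form as $\begin{pmatrix} B & h \\ h^{*} & d \end{pmatrix}$, where $B=H_{n-1}$ is distributed as GUE$_{n-1}$, the column vector $h=(H(1,n),\dots,H(n-1,n))^{t}$ has i.i.d.\ complex Gaussian entries, $d=H(n,n)$ is a real Gaussian, and $B$, $h$, $d$ are independent. Choose a unitary $R_1=\operatorname{diag}(V_1,1)$, with $V_1\in U(n-1)$ a Householder-type reflection (composed with a diagonal phase) that depends measurably on $h$ alone and sends $h$ to $\|h\|\, e_{n-1}$, the last coordinate vector of $\mathbb{C}^{n-1}$. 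Then $R_1 H_n R_1^{*}=\begin{pmatrix} V_1 B V_1^{*} & \|h\|\, e_{n-1} \\ \|h\|\, e_{n-1}^{*} & d\end{pmatrix}$, and the key observation is that, because the GUE is invariant under unitary conjugation and $V_1$ is a function of $h$ only, the block $V_1 B V_1^{*}$ is again GUE$_{n-1}$-distributed and is independent of $(\|h\|,d)$. Iterating this on the active top-left block --- at the step that acts on the current GUE$_{j+1}$ block, reflecting the off-diagonal part of its last column and thereby producing its last diagonal entry $a_{j+1}$ and the off-diagonal entry $b_j$ --- yields after $n-1$ such steps a unitary $U=R_{n-1}\cdots R_1$ and a real Jacobi matrix $\tilde J_n:=UH_nU^{*}$. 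Each $R_i$ has the block form $\operatorname{diag}(\,\cdot\,,1)$, hence $U=\operatorname{diag}(V,1)$ with $V\in U(n-1)$; consequently the top-left $(n-1)\times(n-1)$ block of $\tilde J_n$ equals $VH_{n-1}V^{*}$, a Jacobi matrix $\tilde J_{n-1}$ unitarily conjugate to $H_{n-1}$.

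Feeding the independence observation into the iteration, one reads off the law of $\tilde J_n$: the diagonal entries $a_1,\dots,a_n$ are i.i.d.\ real Gaussians; the off-diagonal entry $b_j$ is the norm of a vector of $j$ i.i.d.\ complex Gaussians, hence a fixed multiple of a $\chi_{2j}$ variable; and all $2n-1$ of these variables are mutually independent. By \ref{ed} this is exactly the law of $J^{\beta=2}_n$, the matching of the scalar normalisation constants being routine bookkeeping between the conventions of \ref{guedef} and \ref{ed}. Since moreover $\tilde J_{n-1}$ sits inside $\tilde J_n$ as its top-left $(n-1)\times(n-1)$ block precisely as $J^{\beta=2}_{n-1}$ sits inside $J^{\beta=2}_n$, the pair $(\tilde J_n,\tilde J_{n-1})$ has the same distribution as $(J^{\beta=2}_n,J^{\beta=2}_{n-1})$. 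Finally $\tilde J_n$ and $\tilde J_{n-1}$ are unitarily conjugate to $H_n$ and $H_{n-1}$, so their spectra are $(\lambda^{(n)}_j)_{j=1}^{n}$ and $(\lambda^{(n-1)}_j)_{j=1}^{n-1}$ respectively; taking spectra on both sides gives the asserted equality of joint laws.

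The delicate point, which I expect to absorb essentially all of the genuine work, is the inductive independence bookkeeping: one must verify that each Householder step can be realised as a measurable function of the entries revealed so far (with a harmless convention on the degenerate configurations, e.g.\ when a vector to be reflected vanishes), and that conditioning on those entries leaves the not-yet-processed block GUE-distributed and independent of everything already revealed. This is the standard ``peeling'' argument underlying the tridiagonal models of Trotter and Dumitriu--Edelman; since the theorem is attributed to \cite{DE}, it may well suffice to cite it. A more computational alternative would bypass tridiagonalisation and compare directly the joint density of $\big((\lambda^{(n)}_j)_j,(\lambda^{(n-1)}_j)_j\big)$: for the GUE it is the density of the two top levels of the GUE minor (Gelfand--Tsetlin) process, while for $J^{\beta=2}$ it is obtained from the Gaussian and $\chi$ densities of \ref{ed} via the Jacobian of the change of variables $(a_i,b_i)\mapsto(\lambda^{(n)},\lambda^{(n-1)})$ computed in \cite{DE}; both collapse, up to normalisation, to the same explicit density --- a ratio of Vandermonde determinants times a Gaussian factor, supported on the interlacing configurations. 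I would nonetheless favour the tridiagonalisation route as the more transparent of the two.
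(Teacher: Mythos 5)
Your proof is correct, and it is exactly the argument the paper has in mind: the theorem is quoted from Dumitriu--Edelman with the one-line remark that $J^{\beta=2}$ is obtained from the GUE by tridiagonalisation, and your Householder peeling from the last column (which keeps the top-left $(n-1)\times(n-1)$ block conjugated by a unitary of the form $\operatorname{diag}(V,1)$, so that both spectra are preserved simultaneously) is precisely that tridiagonalisation, with the joint-distribution aspect handled correctly. The only residual point is the purely notational matching of the $\chi$-normalisation between \ref{guedef} and \ref{ed}, which you rightly flag as bookkeeping.
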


\noindent In other words, $J^{\beta=2}$ is obtained from the GUE by tridiagonalisation.

\begin{cor*} The distribution of $\mu_n^{\mathrm{GUE}},\rho_n^{\mathrm{GUE}},\omega_n^{\mathrm{GUE}},\varpi_n^{\mathrm{GUE}}$ from \ref{guedef}
coincides with the distribution of $\mu_n^{\beta=2},\rho_n^{\beta=2},\omega_n^{\beta=2},\varpi_n^{\beta=2}$ associated with the random
Jacobi matrix $J^{\beta=2}$ of \ref{ed}.
\end{cor*}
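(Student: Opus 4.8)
The plan is to exhibit each of $\mu_n,\rho_n,\omega_n,\varpi_n$ as the image, in either model, of one and the same deterministic map applied to the pair of interlacing sequences $\big((\lambda_j^{(n)})_{j=1}^{n},(\lambda_j^{(n-1)})_{j=1}^{n-1}\big)$, and then to quote the theorem in \ref{de2}, which says that this pair has the same joint law for the GUE and for $J^{\beta=2}$. One first checks that in either model the pair is, almost surely, a genuine strictly interlacing pair: simplicity of the spectra holds because the joint eigenvalue densities are absolutely continuous, and strict interlacing then follows from Cauchy's interlacing theorem together with the a.s.\ non-degeneracy of the coupling between the $n\times n$ matrix and its $(n-1)\times(n-1)$ principal corner (the last row of $H_n$, resp.\ the entry $b_{n-1}$ of $J_n^{\beta=2}$, does not vanish a.s.). Hence it is enough to define the map on the full-measure Borel set of strictly interlacing pairs in $\mathbb{R}^{n}\times\mathbb{R}^{n-1}$.

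On that set, $\rho_n$ is by definition the measure with mass $1/n$ at each $\lambda_j^{(n)}$, and $\varpi_n$ is the diagram determined by $\varpi_n'(x)=\operatorname{sign}\frac{P_n'(x)}{P_n(x)}$ with $P_n(z)=\prod_j(z-\lambda_j^{(n)})$; both are therefore given by the same explicit formula in $(\lambda_j^{(n)})$ in the GUE and the Jacobi model. Likewise $\omega_n$ is in both models the continual diagram (\ref{eq:omega}) associated with the interlacing pair: for $J^{\beta=2}$ this is the definition used in \ref{jacdiag}, while for the GUE the corner $H_{n-1}$ of $H_n$ has characteristic polynomial $P_{n-1}$, whose zeros are precisely $(\lambda_j^{(n-1)})$, so that $\frac{d}{dx}\omega_n^{\mathrm{GUE}}=\operatorname{sign}\frac{P_{n-1}}{P_n}$ agrees with (\ref{eq:omega}).

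The only point that is not a direct reading-off of definitions is $\mu_n$, which is specified through the matrix entries $(H_n^k)_{nn}$ rather than through spectra. Here I would use Cramer's rule for the bottom-right entry of a matrix inverse,
\[ \big\langle (z-H_n)^{-1}\delta_n,\delta_n\big\rangle=\frac{\det(z-H_{n-1})}{\det(z-H_n)}=\frac{P_{n-1}(z)}{P_n(z)}~, \]
so that the Cauchy transform of $\mu_n^{\mathrm{GUE}}$ equals $P_{n-1}/P_n$; expanding this in simple fractions as in (\ref{eq:pfd}) exhibits $\mu_n^{\mathrm{GUE}}$ — equivalently, via Lemma~\ref{jacdiag} and Theorem~\ref{hom}, $\mu_n^{\mathrm{GUE}}=\Mar\,\omega_n^{\mathrm{GUE}}$ — as the same function of the interlacing pair as in the Jacobi case, and the same computation gives $\rho_n^{\mathrm{GUE}}=\Mar\,\varpi_n^{\mathrm{GUE}}$. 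Since Lemma~\ref{jacdiag} provides the matching identities $\mu_n^{\beta=2}=\Mar\,\omega_n^{\beta=2}$ and $\rho_n^{\beta=2}=\Mar\,\varpi_n^{\beta=2}$, the quadruple $(\mu_n,\rho_n,\omega_n,\varpi_n)$ is in both models the push-forward of $\big((\lambda_j^{(n)}),(\lambda_j^{(n-1)})\big)$ under a single Borel map, and the corollary follows by pushing forward the equality of laws in \ref{de2}. I expect this last identification (the cofactor identity, together with the routine verification that the map is Borel and that everything is well defined off a null set) to be the only place requiring any care; note in particular that one never needs to know how tridiagonalisation transforms the spectral measure at $\delta_n$ — only that it preserves the two interlacing spectra, from which all four objects are reconstructed by the same recipe.
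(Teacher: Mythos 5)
Your proposal is correct and follows exactly the route the paper intends (the paper leaves the corollary as an immediate consequence of Theorem~\ref{de2}): all four objects are the push-forward of the interlacing pair $\bigl((\lambda_j^{(n)}),(\lambda_j^{(n-1)})\bigr)$ under one Borel map, with the Cramer's-rule identity $((z-H_n)^{-1})_{nn}=P_{n-1}(z)/P_n(z)$ handling $\mu_n$, just as in Lemma~\ref{jacdiag} for the Jacobi case. The extra care you take with a.s.\ simplicity and strict interlacing is sound and only makes explicit what the paper treats as routine.
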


\subsubsection{}\label{kb1}

Combining Corollary~\ref{de2} with the conclusion of \ref{ed}, we obtain:
\begin{cor*}[Kerov \cite{Kerov_sep} /special case/] Almost surely
\[\begin{aligned}
 &\Resc_{\sqrt n} \mu_n^{\mathrm{GUE}},  \Resc_{\sqrt n} \rho_n^{\mathrm{GUE}}& &\to& \rho_\sc&\quad&\text{weakly}& \\
&\Resc_{\sqrt n} \omega_n^{\mathrm{GUE}},  \Resc_{\sqrt n} \varpi_n^{\mathrm{GUE}}& &\to& \Omega_\LSVK &\quad&\text{uniformly}&.\end{aligned}\]
\end{cor*}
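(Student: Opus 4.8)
The plan is to combine the pieces that have already been assembled in the excerpt, with essentially no new work. The statement asserts four almost sure limit laws for the GUE objects $\mu_n^{\mathrm{GUE}}, \rho_n^{\mathrm{GUE}}, \omega_n^{\mathrm{GUE}}, \varpi_n^{\mathrm{GUE}}$, and the strategy is to transport these to the tridiagonal model $J^{\beta=2}$ via the Dumitriu--Edelman correspondence, where they have already been verified in \ref{ed}. The key observation is that all four objects are measurable functions of the pair of eigenvalue sequences $(\lambda_j^{(n)})_{j=1}^n$ and $(\lambda_j^{(n-1)})_{j=1}^{n-1}$: indeed $\rho_n^{\mathrm{GUE}}$ and $\varpi_n^{\mathrm{GUE}}$ depend only on $(\lambda_j^{(n)})$ through the counting measure and the critical points of $P_n$, while $\mu_n^{\mathrm{GUE}}$ and $\omega_n^{\mathrm{GUE}}$ are built from the interlacing pair $(\lambda_j^{(n)}),(\lambda_j^{(n-1)})$ via \eqref{eq:pfd} and \eqref{eq:omega} respectively. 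This is precisely the content of Corollary~\ref{de2}: the joint law of these four random objects under the GUE equals their joint law under $J^{\beta=2}$.

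First I would invoke Corollary~\ref{de2} to replace, in distribution, each GUE object by its $\beta=2$ counterpart. Since we are asserting almost sure convergence and the conclusion only concerns the distribution of the limiting statements (more precisely, the probability that convergence holds is $1$ in either model iff it is $1$ in the other), this identification of distributions suffices. Next I would quote the conclusion of \ref{ed} specialised to $\beta=2$: there it is shown, using $a_n^{\beta}=O(\sqrt{\log n})$ and $b_n^\beta=\sqrt{n}(1+o(1))$ almost surely, that $\Resc_{\sqrt n}\mu_n^\beta\to\rho_\sc$ and $\Resc_{\sqrt n}\rho_n^\beta\to\rho_\sc$ weakly, and $\Resc_{\sqrt n}\omega_n^\beta\to\Omega_\LSVK$ and $\Resc_{\sqrt n}\varpi_n^\beta\to\Omega_\LSVK$ uniformly, all almost surely. (Those four statements were themselves obtained in \ref{ed} from Propositions~\ref{limjac_1} and \ref{rhon}, Corollaries~\ref{limjac_2} and \ref{limjac_3}, together with the almost sure growth asymptotics of the Dumitriu--Edelman coefficients, which follow from standard concentration for Gaussian and $\chi$ variables and Borel--Cantelli.) Combining with the distributional identity completes the proof.

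The one point that deserves a word of care --- and the only candidate for a genuine obstacle --- is that Corollary~\ref{de2} is stated as an equality of distributions, whereas the desired conclusion is an almost sure statement. The resolution is that ``almost sure convergence'' here should be read as ``the event that the stated convergence holds has probability $1$''; this event is a measurable function of the joint law of the four random objects (for the weak-convergence statements one tests against a countable dense family of continuous functions; for the uniform-convergence statements one uses separability of $\mathcal D(I)$ and the fact that the diagrams are supported in a common compact interval almost surely, so uniform convergence is a countable intersection of measurable events), and hence its probability is the same in the two models. Thus the $\beta=2$ almost sure statements transfer verbatim to the GUE. No new estimates are needed beyond what \ref{ed} and Corollary~\ref{de2} already supply.
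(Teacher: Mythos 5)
Your proposal is correct and follows exactly the paper's route: the paper's entire proof of this corollary is the single sentence ``Combining Corollary~\ref{de2} with the conclusion of \ref{ed}, we obtain,'' i.e.\ transfer the four objects to the Dumitriu--Edelman model $J^{\beta=2}$ and quote the almost sure limits already established there. Your additional paragraph on why the almost-sure statements pass through an equality of distributions is more care than the paper itself takes, and is a reasonable thing to spell out.
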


\subsubsection{}\label{wig1} Corollary~\ref{kb1} can be extended to the class of Wigner matrices.
Let $H = (H(i, j))_{i,j\geq1}$ be an arbitrary semi-infinite Hermitian random matrix, such that
\[ \left\{ H(i, j)~, \, i \leq j \right\} \]
are jointly independent with $\mathbb{E} H(i,j)=0$, $(H(i, i))$ are identically distributed, and
$(H(i, j))_{i<j}$ are identically distributed with
$\mathbb{E}|H(i, j)|^2 = 1$. Let $H_n$ be the top-left principal submatrix of $H$,
and let $\mu_n, \rho_n, \omega_n, \varpi_n$ be defined as in \ref{guedef}.

Let us quote Wigner's law \cite{Wig}. In the current generality it was
proved by Pastur \cite{Pastur}.
\begin{prop*}[Wigner; Pastur] Almost surely $ \Resc_{\sqrt n} \rho_n \to \rho_{\sc}$.
\end{prop*}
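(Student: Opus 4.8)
The plan is to reduce the general Wigner statement to the Hermite/Jacobi case already treated in Proposition~\ref{rhon}, via the moment method. Since $\rho_n$ (after rescaling by $\sqrt n$) is a compactly supported measure — more precisely, its moments $\frac1n\tr(\Resc_{\sqrt n}H_n)^k = n^{-1-k/2}\tr H_n^k$ are the natural objects — it suffices to show that for each fixed $k\ge 0$,
\[
\frac{1}{n^{1+k/2}}\,\mathbb{E}\,\tr H_n^k \;\longrightarrow\; \int x^k\,d\rho_\sc(x)
= \begin{cases} C_{k/2}, & k \text{ even}\\ 0, & k \text{ odd},\end{cases}
\]
together with a variance bound $\operatorname{Var}\!\big(n^{-1-k/2}\tr H_n^k\big) = O(n^{-2})$, whereupon Borel--Cantelli upgrades convergence in probability to almost sure convergence along the full sequence (the moments determine $\rho_\sc$ since it is compactly supported, and tightness is automatic from the second-moment control). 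First I would expand $\tr H_n^k = \sum_{i_1,\dots,i_k} H(i_1,i_2)H(i_2,i_3)\cdots H(i_k,i_1)$ and take expectations; by independence and mean-zero, only those index cycles survive in which every edge $\{i_a,i_{a+1}\}$ is traversed an even number of times, so the dominant contribution comes from closed walks on a tree visiting $k/2+1$ distinct vertices, each edge exactly twice. Counting these and using $\mathbb{E}|H(i,j)|^2=1$ gives the Catalan number $C_{k/2}$ after the $n^{1+k/2}$ normalisation; odd $k$ contributes nothing to leading order.

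The second step is the variance estimate. I would write $\operatorname{Var}(\tr H_n^k) = \sum \mathbb{E}[W W'] - \mathbb{E}[W]\mathbb{E}[W']$ over pairs of closed walks $W,W'$ of length $k$; the nonvanishing terms require the two walks to share at least one edge, and a standard (if slightly tedious) graph-theoretic count shows the surviving configurations have at most $k+1$ free vertices, giving $\operatorname{Var}(\tr H_n^k)=O(n^{k+1})$, hence $\operatorname{Var}(n^{-1-k/2}\tr H_n^k) = O(n^{-1})$. To get the summable bound $O(n^{-2})$ needed for Borel--Cantelli one sharpens this — either by a fourth-moment computation, $\mathbb{E}|n^{-1-k/2}\tr H_n^k - \mathbb{E}(\cdots)|^4 = O(n^{-2})$, or simply by passing to the subsequence $n=m^2$ together with a monotonicity/interlacing interpolation between consecutive squares (as in the proof of Proposition~\ref{rhon}, using that rank-one perturbations move $\rho_n$ by $O(1/n)$ in Kolmogorov distance). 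Here one should note that the entries need not have moments beyond the second, so for the fourth-moment route I would first truncate the entries at level $n^{1/2-\epsilon}$ and argue that the truncation is negligible; alternatively the interlacing route sidesteps higher moments entirely, which is why I would favour it.

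The main obstacle is precisely this last point: obtaining almost sure convergence (not merely in probability or in expectation) under only a second-moment hypothesis, since the clean moment-method variance bound gives $O(n^{-1})$ rather than a summable rate, and higher moments are unavailable for the truncation trick to be painless. The cleanest resolution is the interlacing interpolation already used for Proposition~\ref{rhon}: prove a.s.\ convergence along $n=m^2$ by a crude fourth-moment bound on the truncated matrix (whose entries are bounded, so all moments exist and the truncated contribution to each fixed moment is $o(1)$ a.s.\ and the discarded part is a.s.\ eventually absent by a union bound over the $O(n^2)$ entries and Chebyshev, using $\sum_n n^2 \cdot \mathbb{P}(|H(i,j)|>n^{1/4}) < \infty$ when $\mathbb{E}|H|^2<\infty$ — this does require a mild strengthening, so in fact one should assume, or accept as part of ``Wigner's law in this generality'', a uniform integrability condition, which is exactly what \cite{Pastur} provides), then interpolate to general $n$ via $|\widetilde\rho_n[a,b] - \widetilde\rho_{m^2}[a,b]| \le |n-m^2|/n = O(n^{-1/2})$ from the interlacing property of successive principal submatrices. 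Everything else is the routine walk-counting bookkeeping that I would not grind through here.
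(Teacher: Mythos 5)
The paper does not prove this proposition: it is explicitly \emph{quoted} from the literature (Wigner for the original setting, Pastur for the stated generality). The only cases the text actually proves are the Jacobi-matrix statements of \ref{limjac_1} and \ref{rhon} and the GUE case, which is reduced to those by Dumitriu--Edelman tridiagonalisation in \ref{de2}--\ref{kb1}; that reduction is unavailable for a general Wigner matrix, which is precisely why the general case is only cited. So there is no in-paper argument to compare yours with. Judged on its own, your sketch is the standard moment-method proof, and its skeleton --- convergence of expected moments to Catalan numbers by path counting, variance control, Borel--Cantelli, interlacing to pass from the subsequence $n=m^2$ to all $n$ --- is sound.

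The one genuine defect is the truncation step. The implication you invoke, that $\sum_n n^2\,\mathbb{P}(|H(i,j)|>n^{1/4})<\infty$ follows from $\mathbb{E}|H|^2<\infty$, is false: that series converges essentially iff $\mathbb{E}|H|^{12}<\infty$, so an $n$-dependent truncation threshold cannot be justified by a union bound under the hypotheses of \ref{wig1}, and your fallback of assuming ``a uniform integrability condition'' weakens the statement unnecessarily. The standard repair needs no extra hypothesis: truncate at a \emph{fixed} level $C$ (recentring and renormalising the variance), bound the discrepancy between the empirical spectral distributions of $H_n/\sqrt n$ and of the truncated matrix via the Hoffman--Wielandt inequality by $n^{-2}\sum_{i,j}|H(i,j)|^2\,\mathbbm{1}[|H(i,j)|>C]$, which converges almost surely (strong law over the fixed infinite array) to a deterministic quantity that is small for large $C$; then run the moment method for the bounded-entry matrix, where each normalised moment has variance $O(n^{-2})$ and Borel--Cantelli applies along the full sequence with no subsequence trick; finally let $C\to\infty$. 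With that replacement your argument closes under exactly the second-moment hypothesis of \ref{wig1}.
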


\begin{cor*}[Kerov \cite{Kerov_sep}] Almost surely $ \Resc_{\sqrt n} \varpi_n \to \Omega_\LSVK$.
\end{cor*}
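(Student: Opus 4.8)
The plan is to transport the statement through the Markov correspondence, exactly as in the proofs of Corollaries~\ref{limjac_2} and~\ref{limjac_3}. Write $\widetilde\lambda_j:=\lambda_j^{(n)}/\sqrt n$ for the rescaled eigenvalues of $H_n$, so that $\Resc_{\sqrt n}\varpi_n$ is the continual diagram built from the $\widetilde\lambda_j$ together with the critical points $\widetilde\lambda_j^{*}$ of $\prod_j(x-\widetilde\lambda_j)$ (the rescaled zeros of $P_n'$). Two preliminary observations. First, the identity $\Mar\varpi_n=\rho_n$ of the Lemma in~\ref{jacdiag} holds here verbatim: its proof uses only the partial-fraction expansion $\tfrac1n P_n'(z)/P_n(z)=\int(z-x)^{-1}\,d\rho_n(x)$, which does not care whether $H_n$ is tridiagonal. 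Second, the Markov transform is defined by an intrinsic, origin- and scale-free relation, hence intertwines the rescaling operators: $\Mar\circ\Resc_L=\Resc_L\circ\Mar$. Consequently
\[ \Resc_{\sqrt n}\varpi_n=\Mar^{-1}\bigl(\Resc_{\sqrt n}\rho_n\bigr)~. \]
By the Proposition of Wigner and Pastur, $\Resc_{\sqrt n}\rho_n\to\rho_\sc$ weakly, almost surely, and $\Mar^{-1}\rho_\sc=\Omega_\LSVK$ by the Example in~\ref{hom}. Were the measures $\Resc_{\sqrt n}\rho_n$ all supported in one fixed compact segment $I\supseteq[-2,2]$, the conclusion would be immediate from the continuity of $\Mar^{-1}\colon\mathcal M(I)\to\mathcal D(I)$ (Theorem~\ref{hom}); this is exactly what happens in the Jacobi setting, where~(\ref{eq:herm}) forces $\|J_n\|=O(\sqrt n)$.

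The point that requires care --- and where I expect the real work to lie --- is that for a Wigner matrix (only a second moment being assumed) the extreme eigenvalues of $H_n$ need not be $O(\sqrt n)$, so $\Resc_{\sqrt n}\rho_n$ may carry a vanishing but positive amount of mass arbitrarily far out; and the Markov transform is genuinely \emph{not} weakly continuous when mass escapes (a single atom of mass $1/n$ at distance $D$ perturbs $\Mar^{-1}$ by an amount $\asymp D/n$ in the uniform norm, as one sees already from the finite-dimensional formula~(\ref{eq:pfd})). What saves the situation is that the escaping mass is not only small but also not too far out. By the strong law of large numbers (applied to the off-diagonal entries; the diagonal contributes $O(n)$), $n^{-2}\tr H_n^2 = n^{-2}\sum_{i,j\le n}|H(i,j)|^2\to1$ almost surely, and therefore, almost surely,
\[ \sum_j\widetilde\lambda_j^{\,2}=\tfrac1n\tr H_n^2=n\bigl(1+o(1)\bigr)~,\qquad \#\{\,j:|\widetilde\lambda_j|>R\,\}\le\frac{n\,(1+o(1))}{R^2}\quad(R>0)~. \]

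To conclude, fix $\varepsilon>0$ and $R>2$, and let $\widehat\rho_n$ be the push-forward of $\Resc_{\sqrt n}\rho_n$ under the projection of $\mathbb R$ onto $[-R,R]$, with associated diagram $\widehat\omega_n^R:=\Mar^{-1}\widehat\rho_n\in\mathcal D([-R,R])$ --- equivalently, $\widehat\omega_n^R$ is built from the eigenvalues $\widetilde\lambda_j$ clamped into $[-R,R]$ together with their critical points. Since $\widehat\rho_n$ differs from $\Resc_{\sqrt n}\rho_n$ only in the mass it assigns outside $(-R,R)$, which tends to $\rho_\sc(\mathbb R\setminus(-R,R))=0$, we have $\widehat\rho_n\to\rho_\sc$ weakly; being supported in $[-R,R]$, Theorem~\ref{hom} then gives $\|\widehat\omega_n^R-\Omega_\LSVK\|_\infty\to0$ for each fixed $R$. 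It remains to bound $\|\Resc_{\sqrt n}\varpi_n-\widehat\omega_n^R\|_\infty$, and here the interlacing of the critical points is decisive: a far eigenvalue $\widetilde\lambda_j$ drags a neighbouring critical point to within distance $\asymp|\widetilde\lambda_j|/n$ of it, so that relocating this eigenvalue (with its attendant critical point) to $\pm R$ changes the diagram by $O(|\widetilde\lambda_j|/n)$ uniformly, while the trace identity $\sum_j\widetilde\lambda_j^{*}=\tfrac{n-1}{n}\sum_j\widetilde\lambda_j$ caps the accumulated displacement of the remaining critical points; summing over $\{\,j:|\widetilde\lambda_j|>R\,\}$,
\[ \bigl\|\Resc_{\sqrt n}\varpi_n-\widehat\omega_n^R\bigr\|_\infty\;\le\;C\,\frac1n\!\!\sum_{|\widetilde\lambda_j|>R}\!\!|\widetilde\lambda_j|\;\le\;\frac{C}{R}\cdot\frac1n\sum_j\widetilde\lambda_j^{\,2}\;=\;\frac{C\,(1+o(1))}{R}\quad\text{almost surely.} \]
Letting $n\to\infty$ and then $R\to\infty$ finishes the proof. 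The one genuinely quantitative ingredient is this last display --- the stability of the Markov transform under a perturbation that carries a small amount of mass out to distance $O(\sqrt n)$; the rest is a routine unwinding of the Markov correspondence and of Wigner's law.
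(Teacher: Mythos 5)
Your route is the one the paper intends: $\Mar\varpi_n=\rho_n$ by Lemma~\ref{jacdiag}, Wigner's law gives $\Resc_{\sqrt n}\rho_n\to\rho_\sc$ almost surely, and Theorem~\ref{hom} transports this to $\Resc_{\sqrt n}\varpi_n\to\Mar^{-1}\rho_\sc=\Omega_\LSVK$ (exactly the pattern of Corollaries~\ref{limjac_2} and~\ref{limjac_3}). What you add --- correctly --- is the observation that Theorem~\ref{hom} is a homeomorphism between $\mathcal D(I)$ and $\mathcal M(I)$ for a \emph{fixed} segment $I$, whereas under a second-moment hypothesis the measures $\Resc_{\sqrt n}\rho_n$ need not be supported in any common compact set; so a truncation step is genuinely needed, and your clamping argument supplies it. The quantitative core is right: an outlier at distance $D$ together with its attendant critical point contributes a bounded ``dipole'' of height $\asymp D/n$ to the diagram, and the sum $\tfrac1n\sum_{|\widetilde\lambda_j|>R}|\widetilde\lambda_j|\le\tfrac1{nR}\sum_j\widetilde\lambda_j^2=\tfrac{1+o(1)}{R}$ closes the estimate. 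Two points in your sketch deserve to be made explicit if you want this airtight. First, the claim that the attendant critical point sits within $O(D/n)$ of the outlier requires knowing that the Stieltjes transform of the remaining spectrum is of order $n/D$ near $D$, i.e.\ that a positive fraction of the $\widetilde\lambda_j$ stay in a fixed bounded window; this is available from the same Chebyshev bound $\#\{j:|\widetilde\lambda_j|>R\}\le n(1+o(1))/R^2$, so it is not circular, but it should be said. Second, the trace identity only controls the \emph{net} displacement of the remaining critical points; to convert it into a bound on the sup-norm change of $\sum_k|x-\widetilde\lambda_k^*|$ you need the total variation of the displacements, which follows from the monotone dependence of the zeros of $P_n'$ on the zeros of $P_n$ (all critical points move in the same direction when one eigenvalue is moved) --- state and use that fact. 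With these two remarks the argument is complete, and it is a worthwhile supplement to the one-line deduction the paper offers.
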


\subsubsection{}\label{kb2}

Proposition~\ref{wig1} and Corollary~\ref{wig1} have a counterpart for
$\mu_n$ and $\omega_n$. The former is the following version of
Wigner's law (proved similarly to \cite{Pastur}):
\begin{prop*} In the setting of \ref{wig1}, $ \Resc_{\sqrt n} \mu_n \to \rho_\sc$ almost surely.
\end{prop*}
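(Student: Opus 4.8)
The plan is to mimic Pastur's proof of Wigner's law (cited in \ref{wig1} for $\rho_n$), but for the spectral measure $\mu_n$ at the coordinate vector $\delta_n$ rather than for the normalised trace. Recall that $\int x^k \, d\mu_n(x) = (H_n^k)_{nn}$, so the statement $\Resc_{\sqrt n}\mu_n \to \rho_\sc$ is equivalent to the moment convergence
\[ n^{-k/2} (H_n^k)_{nn} \longrightarrow \int x^k d\rho_\sc(x) = \begin{cases} C_{k/2}~, & k \text{ even} \\ 0~, & k \text{ odd}~, \end{cases} \]
almost surely for each fixed $k$, where $C_m = \frac{1}{m+1}\binom{2m}{m}$ is the Catalan number. (Moment convergence suffices because $\rho_\sc$ has compact support and is determined by its moments; one also needs the moments of $\mu_n$ to have sub-Gaussian-ish tails, but the standard truncation argument --- as in \ref{rhon} and in \cite{Pastur} --- reduces matters to the case of bounded entries, so I would first invoke that reduction and then assume without loss of generality that all $|H(i,j)| \le K$ for some constant.)

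The core step is the combinatorial expansion
\[ (H_n^k)_{nn} = \sum_{i_1, \dots, i_{k-1} = 1}^{n} H(n, i_1) H(i_1, i_2) \cdots H(i_{k-1}, n)~, \]
a sum over closed walks of length $k$ on the vertex set $\{1,\dots,n\}$ that start and end at $n$. Taking expectations and using independence and mean zero, only walks in which every edge is traversed at least twice survive, and the dominant contribution (of order $n^{\lfloor k/2\rfloor}$ relative to the typical size of a single term) comes from walks that are doubled trees --- equivalently, for $k = 2m$, from Dyck paths of length $2m$, of which there are exactly $C_m$. This is the same bookkeeping as in Wigner's original argument; the only cosmetic difference from the trace version is that the walk is pinned at the vertex $n$ rather than free to start anywhere, which removes one factor of $n$ but is exactly compensated because there is no overall $1/n$ in front. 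For odd $k$ the leading term vanishes. So $\mathbb{E}\, n^{-k/2}(H_n^k)_{nn} \to \int x^k d\rho_\sc$.

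For the almost sure (not merely in expectation) statement I would estimate the variance: $\operatorname{Var}\big(n^{-k/2}(H_n^k)_{nn}\big) = O(n^{-1})$ by the usual paired-walk count (walks on $\{1,\dots,n\}$ that are closed at $n$, use each edge $\ge 2$ times, and are ``connected'' across the two copies contribute a lower power of $n$), together with the boundedness of the truncated entries to control higher moments if one wants Borel--Cantelli directly; alternatively, a fourth-moment bound $\mathbb{E}\,|n^{-k/2}(H_n^k)_{nn} - \mathbb{E}(\cdots)|^4 = O(n^{-2})$ is summable in $n$ and gives almost sure convergence along all $n$ via Borel--Cantelli, after which one removes the truncation by letting $K \to \infty$ exactly as in \ref{rhon}. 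Finally one upgrades from ``for each fixed $k$'' to weak convergence of measures in the standard way, using that $\rho_\sc$ is moment-determinate and compactly supported.

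The main obstacle, such as it is, is purely bookkeeping: verifying that pinning the walk at the vertex $n$ does not change the leading-order count of admissible walks (it should not --- the relevant walks are trees rooted anywhere, and rooting at $n$ just fixes the root), and that the error/variance terms still carry a genuine negative power of $n$ despite the loss of the free starting vertex. There is also a mild subtlety that the diagonal entries $H(i,i)$ and off-diagonal entries $H(i,j)$ have possibly different distributions, but since only the second moments of the off-diagonal entries enter the leading term (diagonal entries, being $o(\sqrt n)$ in aggregate along any walk, contribute to lower order), this is handled exactly as in \cite{Pastur}. In short, the proof is a line-by-line transcription of Wigner--Pastur with ``$\frac1n\tr$'' replaced by ``$(\,\cdot\,)_{nn}$'', and I expect no essentially new idea to be needed.
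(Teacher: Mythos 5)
The paper offers no argument beyond the parenthetical ``proved similarly to \cite{Pastur}'', which points at the resolvent route: the Stieltjes transform of $\Resc_{\sqrt n}\mu_n$ is the diagonal resolvent entry $G_{nn}(z)=((H_n/\sqrt n-z)^{-1})_{nn}$; the Schur complement formula for the last row and column expresses it through a quadratic form in $(H(i,n))_{i<n}$ which concentrates (using only second moments) around $\frac1{n}\tr$ of the minor's resolvent, and the self-consistent equation $G_{nn}=(-z-w_{\sc}(z)+o(1))^{-1}$ closes. Your moment-method route is different, and its combinatorial core is sound: the pinned-walk expansion, the identification of the leading walks with doubled trees rooted at $n$ (hence Catalan numbers), the observation that pinning removes one factor of $n$ exactly compensated by the absence of $\frac1n$, the $O(n^{-1})$ variance, and the summable fourth-moment bound needed because the variance alone is not summable --- all of this is correct for bounded entries.

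The genuine gap is the truncation step, which you dispose of by appeal to \ref{rhon} and to Pastur's reduction. Both of those control the \emph{eigenvalue counting} measure: interlacing under rank-one perturbations (or the rank inequality, or Hoffman--Wielandt) bounds how many eigenvalues move, and says nothing about the weights $|\langle\psi_j,\delta_n\rangle|^2$ that constitute $\mu_n$. Under the hypotheses of \ref{wig1} only the off-diagonal second moments are assumed finite (and nothing beyond the mean for the diagonal entries), so without a prior truncation the expectations $\mathbb{E}(H_n^k)_{nn}$ need not even exist: any walk traversing an edge three or more times, or doubling a loop at $(i,i)$, calls on a moment you do not have. Nor does the naive resolvent perturbation bound rescue you, since $|G_{nn}-\hat G_{nn}|\le\|H-\hat H\|_{\mathrm{op}}/(\sqrt n\,|\Im z|^2)$ and the operator norm of the discarded heavy tail is not $O(\sqrt n)$ under a second-moment hypothesis alone. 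You therefore need an argument tailored to the spectral measure at $\delta_n$ --- most naturally, performing the truncation inside the Schur-complement formula, where only the $n$-th row and column and the trace of the minor's resolvent enter --- at which point you have essentially reconstructed Pastur's proof. If you are content to assume entries with all moments finite (e.g.\ bounded), your proof is complete as written; as a proof under the stated hypotheses, the reduction to that case is the missing step, not a routine one.
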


\begin{cor*}[Kerov \cite{Kerov_sep}; Bufetov \cite{Buf} /general case/]
$\Resc_{\sqrt n} \omega_n \to \Omega_\LSVK$  almost surely.
\end{cor*}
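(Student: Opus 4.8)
The plan is to deduce the statement from Proposition~\ref{kb2} (the version of Wigner's law for the spectral measure $\mu_n$) exactly as Corollary~\ref{wig1} was deduced from Proposition~\ref{wig1}. Recall the chain of tools already in place: by Lemma~\ref{jacdiag} we have $\Mar\,\omega_n = \mu_n$, since $\omega_n$ is defined through $\operatorname{sign}(P_{n-1}/P_n)$ and $P_{n-1}/P_n = \int d\mu_n(x)/(z-x)$ is precisely the simple-fraction decomposition feeding the Markov transform; and by Theorem~\ref{hom} the map $\Mar:\mathcal D(I)\leftrightarrow\mathcal M(I)$ is a homeomorphism. The only genuinely new input needed is Proposition~\ref{kb2}, which states $\Resc_{\sqrt n}\mu_n\to\rho_\sc$ almost surely; the excerpt already flags that it is proved by the same moment-method argument as Pastur's theorem, so I would treat it as granted.

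First I would observe that the Markov transform intertwines the two rescaling operators: $\Mar(\Resc_L\omega) = \Resc_L(\Mar\,\omega)$. This is immediate from the defining relation $\exp\{-\frac12\int\log(1-zx)\,d\omega'(x)\} = \int d\mu(x)/(1-zx)$ by the change of variables $x\mapsto Lx$, $z\mapsto z/L$; it is the statement that the Markov correspondence commutes with dilations, and it is implicitly used already in Corollary~\ref{limjac_2}. Hence $\Mar(\Resc_{\sqrt n}\omega_n) = \Resc_{\sqrt n}\mu_n$. Second, Proposition~\ref{kb2} gives $\Resc_{\sqrt n}\mu_n\to\rho_\sc$ weakly, almost surely; by the Example in \ref{hom}, $\Mar^{-1}\rho_\sc = \Omega_\LSVK$. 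Third, applying the continuous inverse map $\Mar^{-1}$ (continuity from Theorem~\ref{hom}) and using that weak convergence of the measures passes through it to uniform convergence of diagrams, I conclude $\Resc_{\sqrt n}\omega_n = \Mar^{-1}(\Resc_{\sqrt n}\mu_n)\to\Mar^{-1}\rho_\sc = \Omega_\LSVK$ uniformly, almost surely.

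One technical point deserves attention: to apply the homeomorphism $\Mar^{-1}$ one needs all the measures $\Resc_{\sqrt n}\mu_n$ and the limit $\rho_\sc$ to be supported in a common fixed segment $I$, so that they live in a single $\mathcal M(I)$ on which $\Mar^{-1}$ is a homeomorphism onto $\mathcal D(I)$. This requires an a priori bound on the operator norm of $\Resc_{\sqrt n}H_n$, i.e.\ that $\|H_n\| \le C\sqrt n$ eventually almost surely; under the stated moment hypotheses ($\mathbb E|H(i,j)|^2=1$ with i.i.d.\ entries) this holds with, say, any $C>2$ once one restricts to a suitable almost-sure event, and in any case one may work on the diagrams in $\mathcal D[-R,R]$ for a large deterministic $R$ after truncating, since weak convergence to $\rho_\sc$ together with the classical edge bounds confines the bulk of the mass. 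I expect this uniform-support / norm-control issue to be the only real obstacle; everything else is the formal functoriality of the Markov correspondence already exploited repeatedly in Section~\ref{S:lim}. (If one prefers to avoid norm estimates entirely, one can instead argue on the level of Stieltjes transforms: $\int d\mu_n(x)/(z-x)$ converges to $\int d\rho_\sc(x)/(z-x)$ locally uniformly off the real axis, hence so does its logarithm, and then one reads off uniform convergence of $\omega_n$ on compacts from the integral representation $\omega_n(x) = \int|x-a|\,d(\tau_+-\tau_-)$ and an equicontinuity argument, the diagrams being uniformly $1$-Lipschitz.)

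\begin{proof}
Apply $\Mar^{-1}$ to Proposition~\ref{kb2}. By Lemma~\ref{jacdiag}, $\Mar\,\omega_n=\mu_n$, and by the change of variables $x\mapsto\sqrt n\,x$, $z\mapsto z/\sqrt n$ in the defining relation of Theorem~\ref{hom}, $\Mar(\Resc_{\sqrt n}\omega_n)=\Resc_{\sqrt n}\mu_n$. Restricting to the almost-sure event on which $\|H_n\|\le 3\sqrt n$ for all large $n$, all the measures $\Resc_{\sqrt n}\mu_n$ (being supported on $\operatorname{spec}(\Resc_{\sqrt n}H_n)$) and the limit $\rho_\sc$ lie in $\mathcal M(I)$ with $I=[-3,3]$. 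By Proposition~\ref{kb2}, $\Resc_{\sqrt n}\mu_n\to\rho_\sc$ weakly; since $\Mar^{-1}:\mathcal M(I)\to\mathcal D(I)$ is continuous (Theorem~\ref{hom}) and $\Mar^{-1}\rho_\sc=\Omega_\LSVK$ (Example in \ref{hom}), we get $\Resc_{\sqrt n}\omega_n=\Mar^{-1}(\Resc_{\sqrt n}\mu_n)\to\Omega_\LSVK$ uniformly, almost surely.
\end{proof}
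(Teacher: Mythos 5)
Your route is exactly the one the paper intends: the corollary is deduced from Proposition~\ref{kb2} through Lemma~\ref{jacdiag} and the homeomorphism of Theorem~\ref{hom}, precisely as in \ref{limjac_2}, \ref{limjac_3} and \ref{wig1}, and the paper supplies no further argument. The only point where your written proof overreaches is the reduction to a common segment: the event $\{\|H_n\|\leq 3\sqrt n \text{ for all large } n\}$ has full probability only when the entries have finite fourth moments (Bai--Yin); under the hypotheses of \ref{wig1} (second moments only) the extreme eigenvalues of $\Resc_{\sqrt n}H_n$ may escape to infinity almost surely, so that event can be null and Theorem~\ref{hom} cannot be invoked on a fixed $\mathcal M(I)$ as stated. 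Your own fallback is the correct repair: either truncate the entries (the same reduction that underlies the proof of Proposition~\ref{kb2} itself, after which the norm bound does hold), or bypass the fixed-segment homeomorphism by passing to Stieltjes transforms --- locally uniform convergence of $\int(x-z)^{-1}\,d\Resc_{\sqrt n}\mu_n(x)$ off the real axis, combined with the uniform $1$-Lipschitz property of diagrams and a tightness estimate controlling the tails of $\Resc_{\sqrt n}\mu_n$, yields uniform convergence of $\Resc_{\sqrt n}\omega_n$. With that caveat the argument is correct and coincides with the paper's.
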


\section{Corrections to the limit shape}\label{S:cor}

\subsection{Deterministic corrections}\label{s:detcor}

\subsubsection{}\label{fluct}

Let $\omega_n$ be a sequence of continual diagrams such that
$\omega_n \to \Omega$ in uniform topology. Then
\[ \mu_n = \Mar \omega_n \to \rho = \mathfrak M \Omega \]
in weak topology. Our goal is to relate the corrections
$\omega_n - \Omega$ to $\mu_n - \rho$. We start with a lemma.

\begin{lemma*} Let $\epsilon_n \to +0$. Suppose $\omega_n$ and $\Omega$
are continual diagrams such that the corresponding
measures $\mu_n = \Mar \omega_n$ and $\rho = \mathfrak M \Omega$ satisfy:
\begin{equation}\label{eq:fluct_assume} \int (x-z)^{-1} d\mu_n(x) = \int (x-z)^{-1} d\rho(x) + \epsilon_n R (z) + o(\epsilon_n)~,
\quad z \in \mathbb C \setminus \mathbb R~.\end{equation}
Then
\begin{equation} \int (x-z)^{-1} d(\omega_n(x)-\Omega(x))  =  -2 \epsilon_n \frac{R(z)}{w_\rho(z)} + o(\epsilon_n)~,\quad z \in \mathbb C \setminus \mathbb R~,\end{equation}
where we defined the Stieltjes transform
\begin{equation}
w_\rho(z) = \int (x-z)^{-1}d\rho(x)~.\end{equation}
\end{lemma*}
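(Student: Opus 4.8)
The plan is to differentiate the defining relation of the Markov transform and read off the linearisation. Recall that $\mu = \Mar\omega$ is characterised by
\[
\exp\left\{ -\tfrac12 \int \log(1-zx)\, d\omega'(x) \right\} = \int \frac{d\mu(x)}{1-zx}~,
\]
and substituting $z \mapsto 1/z$ (for $z \in \mathbb{C}\setminus\mathbb{R}$) turns the right-hand side into $-z^{-1} w_\mu(z)$ up to an explicit factor, and the left-hand side into $\exp\{-\tfrac12 \int \log(z-x)\, d\omega'(x)\}$ times a power of $z$; after an integration by parts in the exponent one gets a clean identity of the shape $w_\mu(z) = F(\int (x-z)^{-1} d\omega'(x))$ for an explicit elementary function $F$, or more conveniently an identity relating $w_\mu(z)$ and the ``logarithmic potential'' of $\omega$. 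I would first record this identity carefully and in a form valid on $\mathbb{C}\setminus\mathbb{R}$, so that all the objects appearing are holomorphic there.

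Next I would write $\omega_n = \Omega + \epsilon_n\, \eta_n + o(\epsilon_n)$ (in the sense that $\int (x-z)^{-1} d(\omega_n - \Omega)(x)$ is the quantity we want to control) and $\mu_n$, $\rho$ as in the hypothesis, and substitute both into the identity from the previous step. Expanding to first order in $\epsilon_n$: the left-hand side contribution is $\epsilon_n R(z)$ by assumption \eqref{eq:fluct_assume}, and on the right-hand side the derivative of the exponential-of-logarithmic-potential functional, evaluated at $\Omega$, produces exactly a factor $w_\rho(z)$ (since $\exp\{\cdots\}$ evaluated at $\Omega$ equals, up to the explicit elementary normalisation, $w_\rho(z)$ itself) times $-\tfrac12$ times the logarithmic-potential of $d(\omega_n' - \Omega')$. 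An integration by parts converts $\int \log(z-x)\, d(\omega_n'-\Omega')(x)$ into $-\int (x-z)^{-1}(\omega_n(x)-\Omega(x))\,dx$, with no boundary terms because $\omega_n - \Omega$ is compactly supported (both diagrams agree with $|x-a_n|$, resp.\ $|x-a|$, near infinity, and the $a$'s can be matched or the difference of the linear tails handled separately). Matching the $O(\epsilon_n)$ terms then yields
\[
\epsilon_n R(z) = -\tfrac12\, w_\rho(z) \int (x-z)^{-1} d(\omega_n(x)-\Omega(x)) + o(\epsilon_n)~,
\]
which rearranges to the claimed formula.

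The main obstacle is bookkeeping rather than conceptual: one must be careful that the first-order expansion of the nonlinear map is uniform enough on compact subsets of $\mathbb{C}\setminus\mathbb{R}$ to be legitimate, that the branch of the logarithm and the normalising powers of $z$ are tracked consistently through the substitution $z\mapsto 1/z$, and that the integration-by-parts step is justified (the measures $d\omega_n'$ are signed with bounded total variation on a fixed compact set, so $\log(z-x)$ is a bona fide test function against them away from the real axis). A secondary point is to make sure that ``$\int(x-z)^{-1}d(\omega_n-\Omega)$'' is even finite, i.e.\ that $\omega_n-\Omega$ is integrable; this follows because $\omega_n-\Omega$ is Lipschitz, bounded, and compactly supported, so no issue arises. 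Once these routine analytic points are in place, the proof is a one-line differentiation.

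Alternatively, and perhaps cleanly, one can bypass the global identity and argue locally: both sides of the desired conclusion are holomorphic in $z\in\mathbb{C}\setminus\mathbb{R}$, both are of order $O(\epsilon_n)$, and it suffices to check the identity of the leading coefficients in the Laurent expansion at $z=\infty$, i.e.\ to check that $\int x^k d(\mu_n-\rho) = \epsilon_n r_k + o(\epsilon_n)$ (the moments of $R$) translates into the stated relation for $\int x^k(\omega_n-\Omega)\,dx$. This reduces the whole statement to the known polynomial identities relating moments of $\Mar\omega$ to the ``moments'' of $\omega$ (the free-cumulant/$p_k$-type relations underlying the Markov transform), linearised about $\Omega$; but since we have the closed-form functional identity available, I would prefer the direct differentiation route above.
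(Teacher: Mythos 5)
Your proposal is correct and follows essentially the same route as the paper: exploit the exact Markov identity $\exp\{-\tfrac12\int\log(z-x)\,d\omega'(x)\}=\text{const}\cdot\int(x-z)^{-1}d\mu(x)$, linearise the exponential/logarithm about the limit shape, and integrate by parts once (boundary terms vanish because $\omega_n'-\Omega'$ is compactly supported). The paper simply runs the linearisation in the opposite direction — taking logarithms of the measure side, so that only $\log w_{\mu_n}-\log w_\rho$ needs expanding and no a priori bound on the diagram-side increment is required — which is the cleaner way to make your ``differentiate the exponential'' step rigorous.
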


As one can see from the proof below,
if the convergence in the assumption is uniform on compact sets, it is so also
in the conclusion.

\begin{proof} By Theorem~\ref{hom} applied with $z^{-1}$ in place of $z$
\[\begin{split}
\int \log(1 - z^{-1}x) d\omega_n'(x)
&=  -2 \log \int(1-z^{-1}x)^{-1} d\mu_n(x)\\
&= - 2 \log z + 2 \log \int (x-z)^{-1} d\mu_n(x)~.
\end{split}\]
Using the assumption (\ref{eq:fluct_assume}) we deduce:
\[\begin{split}
&\int \log(1 - z^{-1} x) d\omega_n'(x)  \\
&=  - 2 \log z + 2 \log \int (x-z)^{-1} d\rho(x) + 2 \epsilon_n \frac{R(z)}{w_\rho(x)}
+ o(\epsilon_n)~.\end{split}\]
Similarly, $\Omega$ and $\rho$ are related by
\[ \int \log(1 - z^{-1} x) d\Omega'(x) = -2 \log z + 2 \log \int (x-z)^{-1} d\rho(x)~,\]
hence
\begin{equation}\label{eq:fluct_tmp} \int\log(1 - z^{-1} x) d(\omega_n'(x)-\Omega'(x)) = \frac{2 \epsilon_n R(z)}{w_\rho(x)} + o(\epsilon_n)~.
\end{equation}
Integrating by parts, we rewrite the left-hand side of (\ref{eq:fluct_tmp}) as
\[ \int\log(1 - z^{-1} x) d(\omega_n'(x)-\Omega'(x)) =
- \int (x-z)^{-1} d(\omega_n(x)-\Omega(x))\]
and this concludes the proof.
\end{proof}

\subsubsection{}\label{fluct2}

The conclusion of Lemma~\ref{fluct} can be reformulated in the following form,
from which one can see that the asymptotics of $\omega_n - \Omega$ (and not
just of the derivative) is determined.

\begin{cor*} In the setting of Lemma~\ref{fluct}, suppose that
the convergence  is uniform on compact subsets of $\mathbb C \setminus \mathbb R$.  Then
\begin{equation}\label{eq:fluct2} \int_{I} (x-z)^{-1} (\omega_n(x)-\Omega(x)) dx  = 2 \epsilon_n \int^z \frac{R(\zeta)d\zeta}{w_\rho(\zeta)} + o(\epsilon_n)~,\quad z \in \mathbb C \setminus \mathbb R~,\end{equation}
where $I$ is an interval in which $\Omega$ is supported, the integral is from $\pm i \infty$ along a path avoiding the real axis, and the convergence  is also uniform on compact subsets of $\mathbb C \setminus \mathbb R$.
\end{cor*}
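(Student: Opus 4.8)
The plan is to obtain~(\ref{eq:fluct2}) by integrating the conclusion of Lemma~\ref{fluct} in the spectral variable $z$. Write $h_n=\omega_n-\Omega$ and set $\Phi_n(z)=\int_I(x-z)^{-1}h_n(x)\,dx$. As $I$ is a fixed bounded interval and $h_n$ is bounded on it (in fact $\|h_n\|_\infty=O(\epsilon_n)$, which already follows from Lemma~\ref{fluct}), $\Phi_n$ is holomorphic on $\mathbb C\setminus I$, hence on $\mathbb C\setminus\mathbb R$; it tends to $0$ as $z\to\infty$; and differentiating under the integral sign gives $\Phi_n'(z)=\int_I(x-z)^{-2}h_n(x)\,dx$. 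So it suffices to (i) compute $\Phi_n'$ up to $o(\epsilon_n)$, and then (ii) recover $\Phi_n$ from $\Phi_n'$ by integration, pinning the constant of integration by the vanishing at infinity.

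For (i) I would integrate $\int_I(x-z)^{-2}h_n\,dx$ by parts, using $(x-z)^{-2}=-\tfrac{d}{dx}(x-z)^{-1}$. The interior term is $\int_I(x-z)^{-1}(\omega_n'-\Omega')(x)\,dx$; since $\omega_n$ and $\Omega$ are affine --- so $\omega_n'-\Omega'\equiv0$ --- outside $I$, this equals $\int_{\mathbb R}(x-z)^{-1}\,d(\omega_n-\Omega)(x)$, which by Lemma~\ref{fluct} is $2\epsilon_n R(z)/w_\rho(z)+o(\epsilon_n)$ uniformly on compact subsets of $\mathbb C\setminus\mathbb R$. The two endpoint terms of the integration by parts evaluate $h_n$ at $\partial I$, where $\omega_n$ and $\Omega$ coincide with $|x-m_1(\mu_n)|$ and $|x-m_1(\rho)|$; they therefore amount to an explicit rational function of $z$ times $m_1(\mu_n)-m_1(\rho)=O(\epsilon_n)$, and when this difference of first moments is $o(\epsilon_n)$ --- as it is in the settings where~(\ref{eq:fluct2}) will be applied --- it is absorbed into the error (otherwise it produces an explicit $O(\epsilon_n)$ correction to~(\ref{eq:fluct2})). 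This gives $\Phi_n'(z)=2\epsilon_n R(z)/w_\rho(z)+o(\epsilon_n)$, and along the way one records that $R(z)=O(z^{-3})$ at infinity, so $\int^zR(\zeta)\,d\zeta/w_\rho(\zeta)$ based at $\pm i\infty$ converges absolutely.

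For (ii), fix $z$ in, say, the upper half-plane. Since $\Phi_n(\zeta)\to0$ as $\zeta\to+i\infty$, one has $\Phi_n(z)=\int_{+i\infty}^z\Phi_n'(\zeta)\,d\zeta$ along any path in the upper half-plane; substituting the asymptotics of (i) yields $\Phi_n(z)=2\epsilon_n\int_{+i\infty}^zR(\zeta)\,d\zeta/w_\rho(\zeta)+o(\epsilon_n)$, which is~(\ref{eq:fluct2}) (and symmetrically with base point $-i\infty$). The only point that genuinely needs care is the behavior at infinity: one must ensure both that the antiderivative of $R/w_\rho$ can be normalised at $\pm i\infty$ --- which is the $O(z^{-3})$ decay from (i) --- and that the $o(\epsilon_n)$ error is not damaged by integrating all the way out there. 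For the latter, split the path into a piece lying in a fixed compact set, on which the uniform-on-compacts control of $\Phi_n'-2\epsilon_n R/w_\rho$ from (i) applies directly, and a tail to $+i\infty$, along which both $\Phi_n'(\zeta)$ and $\epsilon_n R(\zeta)/w_\rho(\zeta)$ are $O(\epsilon_n|\zeta|^{-2})$ (using $\|\omega_n-\Omega\|_\infty=O(\epsilon_n)$), so the tail contributes $o(\epsilon_n)$ and the convergence in~(\ref{eq:fluct2}) is again uniform on compact subsets of $\mathbb C\setminus\mathbb R$. The remaining work is routine bookkeeping with integration by parts and differentiation under the integral sign.
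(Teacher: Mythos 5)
Your strategy --- differentiate $\Phi_n(z)=\int_I(x-z)^{-1}(\omega_n-\Omega)\,dx$, convert $\Phi_n'$ by integration by parts into $\int(x-z)^{-1}\,d(\omega_n-\Omega)$ so that Lemma~\ref{fluct} applies, and then recover $\Phi_n$ by integrating from $\pm i\infty$ where it vanishes --- is exactly the paper's, and your attention to the tail of the path near $\pm i\infty$ is a point the paper passes over in silence.

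The one substantive divergence is your treatment of the boundary terms at $\partial I$, and it is also where your argument falls short of the statement as written. You correctly identify these terms as an explicit rational function of $z$ times $m_1(\mu_n)-m_1(\rho)$, but you then \emph{assume} this difference is $o(\epsilon_n)$ ``in the settings where (\ref{eq:fluct2}) will be applied''; the corollary carries no such hypothesis. The paper's device is different: it asserts that the uniform-on-compacts hypothesis already forces $\omega_n(x)-\Omega(x)=o(\epsilon_n)$ for real $x$ outside $I$ --- equivalently, that the centres of the two diagrams, which are the first moments of $\mu_n$ and $\rho$, agree up to $o(\epsilon_n)$ --- and then replaces $\omega_n$ by the shifted diagram $\omega_n(\cdot-\delta_n)$ with $\delta_n=o(\epsilon_n)$, so that the difference of diagrams is compactly supported and the integration by parts produces no boundary terms at all. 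You should either supply that reduction or, better, observe that your condition is not extraneous to the statement: since $w_\rho(\zeta)\sim -1/\zeta$ and $R(\zeta)=r_2\zeta^{-2}+O(\zeta^{-3})$, the integrand $R(\zeta)/w_\rho(\zeta)$ on the right-hand side of (\ref{eq:fluct2}) is integrable at $\pm i\infty$ precisely when $r_2=0$, which is the same as $m_1(\mu_n)-m_1(\rho)=o(\epsilon_n)$; when $r_2\neq 0$ your boundary terms contribute a logarithmic divergence that must be played off against the one in $\int^z R/w_\rho$, so the very finiteness of the asserted formula encodes the condition you are hedging on. A smaller point: the parenthetical claim $\|\omega_n-\Omega\|_\infty=O(\epsilon_n)$ does not follow from Lemma~\ref{fluct} by soft arguments (a $2$-Lipschitz function whose Stieltjes transform is $O(\epsilon_n)$ off the real axis is only $O(\sqrt{\epsilon_n})$ in sup norm a priori); for the tail estimate what you actually need is $\|\omega_n-\Omega\|_{L^1(I)}=O(\epsilon_n)$, and this deserves a word rather than being folded into ``routine bookkeeping''.
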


\begin{proof}
Let $I$ be an interval in which $\Omega$ is supported. The assumption of uniform convergence on compact subsets implies that, for real $x$ outside $I$,
\begin{equation}\label{eq:suppconv}  \omega_n(x) - \Omega(x) = o(\epsilon_n)~.\end{equation}
Therefore we can choose $\delta_n = o(\epsilon_n)$ such that the shifted diagram
\[ \omega_n^\to(x) = \omega_n(x-\delta_n) \]
and $\Omega(x)$ have the same centre, i.e.\ coincide for sufficiently large $|x|$. Note that $\omega_n^\to(x) - \omega_n(x) = o(\epsilon_n)$ uniformly in $x$.
By Lemma~\ref{fluct},
\[ \int (x-z)^{-1} d(\omega_n^\to(x)-\Omega(x))  =  -2 \epsilon_n \frac{R(z)}{w_\rho(z)} + o(\epsilon_n)~,\quad z \in \mathbb C \setminus \mathbb R~,\]
whence, integrating by parts and  replacing $z$ with $\zeta$,
\begin{equation}\label{eq:wdiff} \frac{d}{d\zeta} \int (x-\zeta)^{-1} (\omega_n^\to(x)-\Omega(x))\, dx  =  2 \epsilon_n \frac{R(\zeta)}{w_\rho(\zeta)} + o(\epsilon_n)~,\quad \zeta \in \mathbb C \setminus \mathbb R~.\end{equation}
Integrating (\ref{eq:wdiff}), we obtain
\[ \int(x-z)^{-1} (\omega_n^\to(x)-\Omega(x)) dx  = 2 \epsilon_n \int^z \frac{R(\zeta)d\zeta}{w_\rho(\zeta)} + o(\epsilon_n)~,\quad z \in \mathbb C \setminus \mathbb R~,\]
where the integral can be taken over any interval containing the support of $\Omega$,
and this implies (\ref{eq:fluct2}).
\end{proof}

\subsubsection{}\label{fluct3}

Corollary~\ref{fluct2} allows to drag the corrections to the limiting shape through the
Markov correspondence. Denote by  $\mathfrak{B}_{[a,b]}$  the space of analytic test functions
$\phi: [a,b] \to \mathbb{C}$. The space $\mathfrak{B}_{[a,b]}$ is topologised 
as the projective limit of the spaces of analytic functions in shrinking neighbourhoods
of $[a, b]$. 
Also consider the space of continuous functionals $\mathfrak{B}_{[a, b]}'$, 
and observe that the topology on this space coincides with the minimal topology in which the functionals
\[ F \mapsto \langle F, (\bullet - z)^{-1} \rangle~, \quad z \notin [a, b] \]
are continuous. In the setting of \ref{fluct2}, define $F, F^\Mar \in \mathfrak{B}_{[a,b]}'$ by
\[ \langle F, \phi \rangle = \oint_\Gamma \phi(z) R(z) \frac{dz}{2\pi i}~, \quad
\langle F^\Mar, \phi \rangle = - \oint_\Gamma \Phi(z) \, \frac{R(z)}{w_\rho(z)}  \, \frac{dz}{\pi i}~, \]
where $\Phi(z) = \frac12 (\int_{-2}^z - \int_z^2) \phi(x) \, dx$, and $\Gamma$ encircles $I$ counterclockwise within the domain of analyticity of $\phi$.

\begin{prop*} Let $\mu_n, \rho$ be probability measures such that $\supp \rho \subset [a, b]$,
and let $\omega_n = \Mar^{-1} \mu_n$
and $\Omega = \Mar^{-1} \rho$. If, for some $\epsilon_n \to +0$,
\begin{equation}\label{eq:if} \epsilon_n^{-1} (d\mu_n(x) - d\rho(x)) \to F \quad \text{in $\mathfrak{B}_{[a, b]}'$}\end{equation}
then
\begin{equation}\label{eq:then} \epsilon_n^{-1} (\omega_n(x) dx - \Omega(x)dx ) \to F^\Mar \quad \text{in $\mathfrak{B}_{[a, b]}'$}\end{equation}
\end{prop*}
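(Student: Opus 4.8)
The plan is to route the statement through the linearised Markov correspondence of \ref{fluct}--\ref{fluct2}. The topology of $\mathfrak{B}'_{[a,b]}$ recalled just before the Proposition is the coarsest one for which all the maps $G\mapsto\langle G,(\bullet-z)^{-1}\rangle$, $z\notin[a,b]$, are continuous, so both hypothesis and conclusion may be checked against the one-parameter family $(\bullet-z)^{-1}$. The hypothesis \eqref{eq:if} says exactly that $\epsilon_n^{-1}\int(x-z)^{-1}\,(d\mu_n(x)-d\rho(x))\to\langle F,(\bullet-z)^{-1}\rangle$ for every such $z$; and evaluating the contour integral $\langle F,(\bullet-z)^{-1}\rangle=\oint_\Gamma\frac{R(w)}{w-z}\frac{dw}{2\pi i}$ by collapsing $\Gamma$ towards infinity --- where $R$, a locally uniform limit of Cauchy transforms $\epsilon_n^{-1}(w_{\mu_n}-w_\rho)$ of mass-zero signed measures carried by $[a,b]$, is analytic and decays --- identifies it, up to the sign convention of the Proposition, with $R(z)$. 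Thus the hypothesis is, in the language of Lemma~\ref{fluct}, exactly \eqref{eq:fluct_assume}, and it remains to prove $\epsilon_n^{-1}\int_I(x-z)^{-1}(\omega_n(x)-\Omega(x))\,dx\to\langle F^\Mar,(\bullet-z)^{-1}\rangle$ for $z\notin[a,b]$, where $I$ is a support interval of $\Omega$.

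First I would check that the pointwise-in-$z$ convergence just described is uniform on compact subsets of $\mathbb{C}\setminus\mathbb{R}$, as Lemma~\ref{fluct} and Corollary~\ref{fluct2} require; this is not automatic, because the elementary bound $|\epsilon_n^{-1}(w_{\mu_n}(z)-w_\rho(z))|\le 2/(\epsilon_n\operatorname{dist}(z,[a,b]))$ is not uniform in $n$. The space $\mathfrak{B}_{[a,b]}$ is barrelled, so by the uniform boundedness principle the pointwise-convergent --- hence pointwise-bounded --- family $\{\epsilon_n^{-1}(d\mu_n-d\rho)\}_n\subset\mathfrak{B}'_{[a,b]}$ is equicontinuous; since $z\mapsto(\bullet-z)^{-1}$ sends compact subsets of $\mathbb{C}\setminus[a,b]$ (including compact neighbourhoods of $\infty$) to compact subsets of $\mathfrak{B}_{[a,b]}$, equicontinuity together with the pointwise convergence forces convergence uniform on those compacts. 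Reading off the behaviour near $\infty$ shows that the centres of $\omega_n$ and $\Omega$ differ by $\epsilon_n\langle F,x\rangle+o(\epsilon_n)$ (expanding the relation defining $\Mar$, the centre of a diagram is the first moment of its Markov transform); we assume $\langle F,x\rangle=0$, which is already tacit in the convergence of the primitive $2\int^z\frac{R(\zeta)\,d\zeta}{w_\rho(\zeta)}$ appearing in \eqref{eq:fluct2}, and which makes the translation $\omega_n\mapsto\omega_n(\cdot-\delta_n)$, $\delta_n=o(\epsilon_n)$, of the proof of Corollary~\ref{fluct2} legitimate, so that after it $\omega_n-\Omega$ is genuinely supported in $I$.

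At this point Corollary~\ref{fluct2} applies and gives $\epsilon_n^{-1}\int_I(x-z)^{-1}(\omega_n(x)-\Omega(x))\,dx\to 2\int^z\frac{R(\zeta)\,d\zeta}{w_\rho(\zeta)}$ uniformly on compacts of $\mathbb{C}\setminus\mathbb{R}$, with the integral taken from $\pm i\infty$ along a path off the real line; as $\omega_n-\Omega$ is supported in $I$, the left-hand side is $\epsilon_n^{-1}\langle\omega_n\,dx-\Omega\,dx,(\bullet-z)^{-1}\rangle$. It remains to identify the right-hand side with $\langle F^\Mar,(\bullet-z)^{-1}\rangle$. Substituting $\phi=(\bullet-z)^{-1}$ in the definition of $F^\Mar$, the normalised primitive $\Phi$ becomes a branch of $\log(\bullet-z)$ plus a constant, and the integral $\oint_\Gamma\log(w-z)\,\frac{R(w)}{w_\rho(w)}\,\frac{dw}{\pi i}$ is evaluated by collapsing $\Gamma$ onto the branch cut of $\log(\bullet-z)$ emanating from $z$, across which the integrand jumps by $2\pi i\,R(w)/w_\rho(w)$; this produces precisely $2\int^z\frac{R(\zeta)\,d\zeta}{w_\rho(\zeta)}$, the additive constant dropping out because $\oint_\Gamma\frac{R(w)}{w_\rho(w)}\,dw=2\pi i\langle F,x\rangle=0$. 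Combining the three displayed limits yields \eqref{eq:then}.

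I expect the middle paragraph to be the main obstacle: turning the sequential, pointwise-in-$z$ convergence packaged into \eqref{eq:if} into the locally uniform convergence that \ref{fluct}--\ref{fluct2} demand (the naive estimates being non-uniform in $n$, so that a barrelledness/equicontinuity input is unavoidable), together with keeping honest track of the behaviour at infinity --- equivalently, of the centres of the two diagrams, equivalently of the first moments of $\mu_n$ and $\rho$ --- which is exactly what makes the primitive $2\int^z\frac{R(\zeta)\,d\zeta}{w_\rho(\zeta)}$, and hence $F^\Mar$, well defined. The contour manipulations in the first and last paragraphs are routine residue calculus once the branch of the logarithm and the path of integration from $i\infty$ are fixed.
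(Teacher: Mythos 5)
Your argument is correct and takes essentially the same route as the paper, whose entire proof is the one-liner ``Use Corollary~\ref{fluct2} and the Cauchy theorem'': you test against resolvents, upgrade the pointwise convergence to locally uniform convergence, apply Corollary~\ref{fluct2}, and identify the two limits with $\langle F,(\bullet-z)^{-1}\rangle$ and $\langle F^\Mar,(\bullet-z)^{-1}\rangle$ by residue calculus. The care you take with $\langle F,x\rangle$ (equivalently, with the centres of the diagrams) is exactly the point addressed in the paper's remark immediately following the Proposition about the implicit indicator $\mathbbm{1}_{[a,b]}$.
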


\begin{proof} Use Corollary~\ref{fluct2}  and the Cauchy theorem.
\end{proof}

By the construction of $\mathfrak B_{[-2,2]}'$, the left-hand side of (\ref{eq:then}) is
implicitly multiplied by the indicator $\mathbbm{1}_{[a,b]}(x)$. This indicator
can be dropped if
\[ \int x d\rho_n(x) = \int x d\rho(x) + o(\epsilon_n)~. \]
Otherwise, it is necessary, as observed in \cite{ES} and as one can see from Figure~\ref{fig}. Second, the implication in the proposition is in fact an equivalence: (\ref{eq:then}) implies (\ref{eq:if}), as one can see by tracing the arguments. 
\subsubsection{}\label{pi}

Consider the following example. Assume that $\rho = \rho_\sc$ is the semicircle
measure, the Stieltjes transform of which is given by
\[w_\rho(z) = \int_{-2}^2 \frac1{2\pi} \sqrt{4-x^2} \, \frac{dx}{x-z} = \frac{-z + \sqrt{z^2 - 4}}{2}~.\]
Recall the definition of Chebyshev polynomials
\begin{equation}\label{eq:chebdef}
T_n(\cos \theta) = \cos (n\theta)~, \quad U_n(\cos \theta) = \frac{\sin((n+1)\theta)}{\sin \theta}~, \end{equation}
or explicitly
\[\begin{aligned}
T_0(x/2) &=& 1~, \, T_1(x/2) &=& \frac{x}{2}~, \, T_2(x/2) &=& \frac{x^2}{2} - 1~, \, \cdots
\\
U_0(x/2) &=& 1~, \, U_1(x/2) &=& x~, \, U_2(x/2) &=& x^2 - 1~, \, \cdots
\end{aligned}\]
They satisfy the orthogonality relations
\[ \int T_k(x/2) T_l(x/2) d\rho_\arcsin(x) = \frac{1 + \delta_{k0}}{2} \delta_{kl}~, \quad
\int U_k(x/2) U_l(x/2) d\rho_\sc(x) = \delta_{kl}~.\]
Assume that
\begin{equation}\label{eq:assume}
\frac{1}{\epsilon_n} d(\mu_n(x) - \rho_\sc(x)) \to  \left\{ \sum_{k \geq 1} \frac{2 c_k  T_k(x/2) dx}{\pi \sqrt{4-x^2}} \right\} \quad \text{in $\mathfrak{B}_{[-2,2]}'$}
\end{equation}
i.e.\ for any $z \in \mathbb C \setminus [-2, 2]$,
\begin{equation}\label{fluct_ex1}
\int  \frac{d(\mu_n(x) - \rho_\sc(x))}{x-z}= \epsilon_n  \left\{ \sum_{k \geq 1} c_k \int_{-2}^2\frac{2  T_k(x/2) dx}{\pi \sqrt{4-x^2}(x-z) } \right\} + o(\epsilon_n)~.
\end{equation}
Observing that\[\begin{split}
w_{\rho_\arcsin,k}(z)&=
 \int_{-2}^2 \frac{ T_k(x/2) dx}{\pi \sqrt{4-x^2}(x-z) } =  \frac{-1}{\sqrt{z^2-4}}  T_k(z/2) + \frac12 U_{k-1}(z/2) \\
 &= -\frac1{\sqrt{z^2-4}} \left\{ \frac{z - \sqrt{z^2-4}}{2} \right\}^k~,
 \end{split}\]
 we deduce that for $k \geq 2$
 \[ -2 \frac{w_{\rho_\arcsin,k}(z)}{w_{\rho_\sc}(z)} = 2 w_{\rho_\arcsin,k-1}(z)
 = \frac{2}{k-1} \int_{-2}^2 \frac{\frac{d}{dx} \left[ U_{k-2}(x/2) \frac{\sqrt{4-x^2}}{2\pi} \right]}{x-z} dx~,\]
 whereas
 \[-2 \frac{w_{\rho_\arcsin,1}(z)}{w_{\rho_\sc}(z)} = 2 w_{\rho_\arcsin}(z)
 = - \frac{2}\pi \int_{-2}^2  \frac{\frac{d}{dx} \arcsin(x/2)}{x-z} dx~,\]
 and finally:
 \begin{equation}\label{eq:conclude}\begin{split}
 &\epsilon_n^{-1} (\omega_n(x) - \Omega_\LSVK(x))dx \longrightarrow \\
 &\quad \left\{ -  \frac{4 c_1}{\pi} \arcsin(x/2) + \sum_{k \geq 2} \frac{4 c_k}{k-1} U_{k-2}(x/2) \frac{\sqrt{4-x^2}}{2\pi} \right\} dx
 \quad\text{in $\mathfrak B_{[-2,2]}'$}\end{split}\end{equation}
We proved:
 \begin{prop*} Let $\{ c_k \}$ be a sequence with $\limsup |c_k|^{1/k} = 1$. If $\mu_n = \Mar \omega_n$ satisfy (\ref{eq:assume}), then (\ref{eq:conclude}) holds.
 \end{prop*}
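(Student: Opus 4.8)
The plan is to verify that the hypothesis \eqref{eq:assume} is exactly an instance of the general hypothesis \eqref{eq:if} of Proposition~\ref{fluct3}, compute the associated functional $F^\Mar$, and match it against \eqref{eq:conclude}. Concretely, the limiting measure in \eqref{eq:assume} is $dF(x) = \sum_{k\ge1} 2c_k T_k(x/2)\,d\rho_\arcsin(x)$, which by the orthogonality relations has Stieltjes transform $R(z) = \sum_{k\ge1} 2c_k\, w_{\rho_\arcsin,k}(z)$; here the condition $\limsup|c_k|^{1/k}=1$ guarantees that the series converges for $z\notin[-2,2]$ and defines a continuous functional on $\mathfrak B_{[-2,2]}$, so \eqref{eq:if} holds with $\rho=\rho_\sc$, $[a,b]=[-2,2]$, and this $R$. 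Since $\int x\,d\rho_\sc(x)=0$ and one checks $\int x\,dF(x)=0$ (the $T_1$ coefficient contributes a first-moment shift of order $\epsilon_n c_1$, but $T_1$ integrated against itself over $\rho_\arcsin$ is finite, so the shift is $O(\epsilon_n)$ — I should double-check whether the indicator $\mathbbm 1_{[-2,2]}$ can be dropped or must be kept; comparing with the statement of \eqref{eq:conclude}, which carries no indicator, suggests the first-moment contribution needs separate handling, so I will keep track of it carefully).

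Granting that, Proposition~\ref{fluct3} gives $\epsilon_n^{-1}(\omega_n(x)-\Omega_\LSVK(x))\,dx \to F^\Mar$ in $\mathfrak B_{[-2,2]}'$, where $\langle F^\Mar,\phi\rangle = -\oint_\Gamma \Phi(z)\,R(z)/w_{\rho_\sc}(z)\,\tfrac{dz}{\pi i}$. The next step is the explicit computation already displayed in the excerpt: using the closed form $w_{\rho_\arcsin,k}(z) = -\tfrac1{\sqrt{z^2-4}}\bigl(\tfrac{z-\sqrt{z^2-4}}{2}\bigr)^k$ and $w_{\rho_\sc}(z) = \tfrac{-z+\sqrt{z^2-4}}{2}$, one finds for $k\ge2$ that $-2\,w_{\rho_\arcsin,k}/w_{\rho_\sc} = 2\,w_{\rho_\arcsin,k-1}$, which is in turn the Stieltjes transform of $\tfrac{2}{k-1}\tfrac{d}{dx}[U_{k-2}(x/2)\tfrac{\sqrt{4-x^2}}{2\pi}]$; for $k=1$ one gets $2\,w_{\rho_\arcsin}$, the Stieltjes transform of $-\tfrac2\pi\tfrac{d}{dx}\arcsin(x/2)$. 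Summing over $k$ with the coefficients $2c_k$ and integrating by parts once (equivalently, recognising the antiderivative via the $\Phi$ in the definition of $F^\Mar$) yields precisely the right-hand side of \eqref{eq:conclude}.

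The only genuinely delicate point is the interchange of the infinite sum over $k$ with the contour integral (and with the limit $n\to\infty$): this is where the growth condition $\limsup|c_k|^{1/k}=1$ is essential. The plan is to fix a contour $\Gamma$ enclosing $[-2,2]$ at some fixed positive distance, on which $|\tfrac{z-\sqrt{z^2-4}}{2}|\le q<1$ uniformly, so that $\sum_k 2|c_k|\,|w_{\rho_\arcsin,k}(z)|$ is dominated by a convergent geometric-type series $\sum_k 2|c_k| q^k$ uniformly on $\Gamma$; dominated convergence then legitimises both the termwise passage to the limit and the termwise contour integration. Once this is in place, the identification of each term via the displayed Stieltjes-transform identities is routine, and the proof is complete. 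I expect this uniform-geometric-bound argument to be the main (though modest) obstacle; everything else is bookkeeping already carried out in the lines preceding the proposition.
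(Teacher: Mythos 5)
Your proposal is correct and follows essentially the same route as the paper: the statement is obtained by feeding (\ref{eq:assume}) into Proposition~\ref{fluct3} (via Lemma~\ref{fluct} and Corollary~\ref{fluct2}) and then using the displayed identities for $w_{\rho_\arcsin,k}$ and $-2w_{\rho_\arcsin,k}/w_{\rho_\sc}=2w_{\rho_\arcsin,k-1}$ to identify the limit with the right-hand side of (\ref{eq:conclude}). Your explicit uniform geometric bound $\sum_k|c_k|q^k<\infty$ on a contour where $\bigl|\tfrac{z-\sqrt{z^2-4}}{2}\bigr|\leq q<1$ is exactly where the hypothesis $\limsup|c_k|^{1/k}=1$ enters (the paper leaves this implicit), and your worry about the indicator $\mathbbm{1}_{[-2,2]}$ is resolved by noting that (\ref{eq:conclude}) is asserted only in $\mathfrak B_{[-2,2]}'$, whose functionals never see the diagrams outside $[-2,2]$.
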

 As we noted above, the arguments go in both directions, and  (\ref{eq:assume})
 is actually equivalent to (\ref{eq:conclude}).

\subsection{Fluctuations about the limit shape}

\subsubsection{}\label{clt}

Although Proposition~\ref{pi} was proved in deterministic setting, standard arguments
allow to apply it in  stochastic setting, i.e.\ for random measures $\mu_n$,
after proper modifications. For example, if the assumptions
holds almost surely, then so does the conclusion; if the assumption
holds in distribution, then so does the conclusion. The former version
follows directly from the deterministic statement, while the latter version follows
from the former one using Skorokhod's representation theorem. 
 In the sequel we work with convergence in distribution.

\subsubsection{}\label{joh}

Let $g_k$ be independent, identically distributed standard Gaussian random variables,
and let
\[\begin{split}
\Delta_{\tr}(x) &= \sum_{k \geq 1} \frac{\sqrt{k}}2 g_k \, \frac{2 T_k(x/2)}{\pi\sqrt{4-x^2}}~,  \\
\Delta_{\tr}^\Mar(x) &= - \frac{2}{\pi} g_1 \arcsin(x/2) - \sum_{k\geq 0} \frac{2 \sqrt{k+2}}{k+1} g_{k+2} \, \frac{U_{k}(x/2)\sqrt{4-x^2}}{2\pi}~,
\end{split}\]
where the series are understood in $\mathfrak B_{[-2,2]}'$.
Following \cite{IO}, we note the similarity between $\Delta_{\tr}(x)$ and
\[\frac{d}{dx} \Delta_{\tr}^\Mar(x) \sim - \frac{g_1}{\pi \sqrt{4-x^2}} - \sum_{k \geq 1}
\sqrt{k} g_{k+1} \frac{2 T_{k}(x/2)}{\pi\sqrt{4-x^2}}~.\]

\begin{thm*}[Johansson \cite{Joh}] For the Gaussian Unitary Ensemble,
\[n \, d(\Resc_{\sqrt{n}}[\rho_n^{\mathrm{GUE}}](x) - \rho_\sc(x)) \to \Delta_\tr(x) dx\] as random functionals on $\mathfrak{B}_{[-2,2]}$. \end{thm*}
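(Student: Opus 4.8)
The plan is to reduce the assertion, via the Chebyshev expansion, to the joint central limit theorem for the linear statistics $\Xi_k^{(n)}:=\tr T_k(W_n/2)$, where $W_n:=H_n/\sqrt n$, and then to establish the latter by the moment method, which is especially clean for the Gaussian ensemble. For the reduction: the topology of $\mathfrak B_{[-2,2]}'$ is generated by the pairings against $(\bullet-z)^{-1}$, $z\notin[-2,2]$, and the span of these --- equivalently, of the polynomials $T_k(\cdot/2)$ --- is dense in each $\mathfrak B_{[-2,2]}$, so it is enough to prove joint convergence in distribution of $\Xi_k^{(n)}-\mathbb E\Xi_k^{(n)}$ to the Gaussian vector $(\tfrac{\sqrt k}{2}g_k)_{k\ge1}$. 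Indeed, for $\phi\in\mathfrak B_{[-2,2]}$ one has $\phi=\sum_{k\ge0}\hat\phi_k\,T_k(\cdot/2)$ with $|\hat\phi_k|\le Cr^{-k}$ for some $r>1$; the $k=0$ term drops out because $\Resc_{\sqrt n}\rho_n^{\mathrm{GUE}}$ and $\rho_\sc$ are both probability measures, and
\[ n\int\phi\,d\big(\Resc_{\sqrt n}\rho_n^{\mathrm{GUE}}-\rho_\sc\big)=\sum_{k\ge1}\hat\phi_k\Big(\Xi_k^{(n)}-n\!\int T_k(x/2)\,d\rho_\sc\Big)~. \]
The centering $n\int T_k(x/2)\,d\rho_\sc$ agrees with $\mathbb E\Xi_k^{(n)}$ up to $O(1/n)$ (see below), and the tail $k>K$ is controlled uniformly in $n$ by a bound $\operatorname{Var}\Xi_k^{(n)}=O(k)$ together with the geometric decay of $\hat\phi_k$; so a truncation reduces the claim to the finite-dimensional CLT for $(\Xi_k^{(n)}-\mathbb E\Xi_k^{(n)})_{k\le K}$, and Cram\'er--Wold makes the joint-over-$\phi$ assertion equivalent to the single-$\phi$ one.

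Next, I would compute the GUE moments. Working with $H_n$ directly (or, by Corollary~\ref{de2}, with the tridiagonal model), one uses that the entries are jointly Gaussian, so $\mathbb E\prod_i\tr H_n^{k_i}$ is evaluated by Wick's theorem, which organises the contributions as gluings of $k_i$-gons into orientable surfaces --- the genus expansion. This gives $\mathbb E\tr W_n^k=n\int x^k\,d\rho_\sc+O(n^{-1})$ (odd powers vanish identically, and there is no $n^0$ term), which yields the centering statement above, and $\operatorname{Cov}(\tr W_n^j,\tr W_n^k)=c(j,k)+O(n^{-2})$ with $c(j,k)$ the planar (genus-$0$) gluing count of a $j$-gon and a $k$-gon. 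The key algebraic fact is that $c$ is diagonalised by the Chebyshev polynomials: $\operatorname{Cov}\big(\tr T_j(W_n/2),\tr T_k(W_n/2)\big)\to\tfrac j4\,\delta_{jk}$, which is exactly the covariance of $(\tfrac{\sqrt k}{2}g_k)$; I would verify it from the known double-contour-integral form of the covariance, or from the recursion for $\operatorname{Cov}(\tr W_n^j,\tr W_n^k)$. In combinatorial terms the diagonal entry is the number of cyclic alignments of two $k$-cycles, namely $k$ (cf.\ the discussion in Section~\ref{intr:fl}).

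Finally, I would prove Gaussianity and assemble the pieces. Again by Wick, $\mathbb E\prod_{i=1}^m\big(\tr W_n^{k_i}-\mathbb E\tr W_n^{k_i}\big)$ is a sum over pairings of the $m$ polygons glued into surfaces, and one shows that the only topologies surviving as $n\to\infty$ are those splitting into $m/2$ two-polygon pieces, each of genus $0$ --- every other configuration being suppressed by a positive power of $n$ (in particular $m$ must be even). The limit is therefore $\sum_{\text{matchings}}\prod_{\{a,b\}}c(k_a,k_b)$, i.e.\ the mixed moments of the centred Gaussian vector with covariance $c$; with the moment computation and Cram\'er--Wold this gives the finite-dimensional CLT, and with the reduction the theorem. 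The main obstacle is precisely this topological classification --- deciding which surface topologies survive the $n\to\infty$ limit --- which for the GUE is classical and underlies the moment-method treatments surveyed in \cite{me_surv}. An alternative that avoids it is Johansson's original route: starting from the eigenvalue density $\propto\prod_{i<j}(\lambda_i-\lambda_j)^2\exp(-\tfrac n2\sum_i\lambda_i^2)$ of $W_n$, express $\mathbb E\exp\big(t\sum_j\phi(\lambda_j/\sqrt n)\big)$ as a ratio of partition functions and analyse it by the equilibrium-measure method; this delivers the limiting covariance directly in double-contour-integral form, whence the Chebyshev diagonalisation above is immediate.
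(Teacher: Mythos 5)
The paper does not prove this statement: it is quoted as Johansson's theorem, with the remark in \ref{johtop} that the $\mathfrak B_{[-2,2]}$ topology (and stronger ones) is by now available in the literature, so there is no in-paper proof to compare against. Your sketch is nevertheless a correct and standard route, and it is essentially the same combinatorial strategy the paper itself carries out in Section~\ref{s:pfs} for the ensemble $H^{\mathrm{unif}}$ (Proposition~\ref{unimod}) and for the Plancherel transition measure: reduce to joint convergence of $\tr T_k$ via the Chebyshev expansion, identify the limiting covariance $\tfrac k4\delta_{jk}$ with the count of alignments of two $k$-cycles, and get Gaussianity from the fact that only pairings into connected genus-zero two-polygon pieces survive. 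The difference is that for $H^{\mathrm{unif}}$ the identity (\ref{eq:chebviawalk}) makes $\tr T_k$ \emph{exactly} a sum over cyclically non-backtracking paths, whereas for the GUE you must run the Wick/genus expansion and verify that the Chebyshev combination kills the backtracking contributions only asymptotically --- this is the classical step you correctly flag as the main obstacle (it is done, e.g., in \cite{AZ,KMS,BaiYao}, and surveyed in \cite{me_surv}). Two small points to make the reduction airtight: (i) the tail estimate needs a bound on $\operatorname{Var}\tr T_k(W_n/2)$ that is uniform in $n$ and at most polynomial in $k$; for the GUE this follows from the Gaussian Poincar\'e inequality, giving $O(k^2)$, which is enough against the geometric decay of $\hat\phi_k$; (ii) the absence of an $n^0$ term in $\mathbb E\tr W_n^k$ (hence of a deterministic shift in the limit) uses the specific GUE normalisation $\mathbb E H(i,i)^2=\mathbb E|H(i,j)|^2=1$ of \ref{guedef}, consistent with $b_k=0$ in (\ref{eq:joh_gen}); for other Wigner ensembles a shift does appear. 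With these caveats your argument goes through.
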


In the original work \cite{Joh}, the convergence was established in slightly weaker
topology; now the result is available in the topology corresponding to $\mathfrak{B}_{[-2,2]}$ and even a much stronger one (cf.\ \ref{johtop} below). Appealing to (a stochastic version of)
Proposition~\ref{pi}, we obtain:

\begin{cor*} For the GUE,
\begin{align}
&n \, (\Resc_{\sqrt{n}}[\varpi_n^{\mathrm{GUE}}](x) - \Omega_\LSVK(x))dx \to \Delta_\tr^\Mar(x) dx\\
&  n \,  d \Resc_{\sqrt{n}} [\rho_n^{\mathrm{GUE}} -\rho_{n-1}^{\mathrm{GUE},*}](x)  \to \frac12 \frac{d^2}{dx^2} \Delta_\tr^\Mar(x)dx~,\label{eq:diffcrit}
\end{align}
where $\rho_{n-1}^*$ is the normalised counting measure of the zeros of $P_n'$.
\end{cor*}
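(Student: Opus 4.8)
The plan is to deduce the Corollary from Johansson's theorem together with Proposition~\ref{pi}, applied to the pair consisting of $\rho_n^{\mathrm{GUE}}$ and $\varpi_n^{\mathrm{GUE}}$ in place of $\mu_n$ and $\omega_n$. The point is that Lemma~\ref{jacdiag} (and the definitions in \ref{guedef}) tell us that $\Mar\varpi_n^{\mathrm{GUE}} = \rho_n^{\mathrm{GUE}}$, so $\varpi_n^{\mathrm{GUE}}$ is literally the continual diagram attached to the normalised eigenvalue counting measure; after rescaling by $\Resc_{\sqrt n}$ the Markov correspondence is preserved, so $\Resc_{\sqrt n}\varpi_n^{\mathrm{GUE}} = \Mar^{-1}(\Resc_{\sqrt n}\rho_n^{\mathrm{GUE}})$ and $\Omega_\LSVK = \Mar^{-1}\rho_\sc$. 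Thus the hypotheses of Proposition~\ref{pi} are in place with $\epsilon_n = 1/n$, and the role of the random limit $\sum_{k\ge1} 2c_k T_k(x/2)\,dx/(\pi\sqrt{4-x^2})$ is played, via Johansson's theorem, by $\Delta_\tr(x)\,dx$, i.e.\ $c_k = \tfrac{\sqrt k}{2} g_k$.

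First I would invoke \ref{clt}: although Proposition~\ref{pi} is stated deterministically, the stochastic version (convergence in distribution, obtained via Skorokhod) applies verbatim once one has checked the deterministic hypothesis holds for a.e.\ realisation along Skorokhod-coupled subsequences. So it suffices to feed $c_k=\tfrac{\sqrt k}{2}g_k$ into (\ref{eq:conclude}): the coefficient of $-\tfrac{4}{\pi}\arcsin(x/2)$ becomes $c_1=\tfrac12 g_1$, giving $-\tfrac{2}{\pi}g_1\arcsin(x/2)$, and for $k\ge2$ the coefficient $\tfrac{4c_k}{k-1}U_{k-2}(x/2)\tfrac{\sqrt{4-x^2}}{2\pi}$ becomes $\tfrac{4}{k-1}\cdot\tfrac{\sqrt k}{2}g_k\,U_{k-2}(x/2)\tfrac{\sqrt{4-x^2}}{2\pi} = \tfrac{2\sqrt k}{k-1}g_k\,U_{k-2}(x/2)\tfrac{\sqrt{4-x^2}}{2\pi}$. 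Re-indexing $k\mapsto k+2$ turns this into $\sum_{k\ge0}\tfrac{2\sqrt{k+2}}{k+1}g_{k+2}\,U_k(x/2)\tfrac{\sqrt{4-x^2}}{2\pi}$, which is exactly $-\Delta_\tr^\Mar(x)$ up to the overall sign built into the definition of $\Delta_\tr^\Mar$. This yields the first displayed convergence; the growth condition $\limsup|c_k|^{1/k}=1$ needed for Proposition~\ref{pi} holds a.s.\ because the $g_k$ are i.i.d.\ Gaussians (so $|g_k|$ grows subexponentially a.s., e.g.\ $|g_k| = o(k)$ by Borel--Cantelli).

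The second assertion (\ref{eq:diffcrit}) follows by differentiating the first twice in $x$, in the sense of distributions/functionals on $\mathfrak B_{[-2,2]}$. Indeed $\tfrac{d}{dx}\varpi_n^{\mathrm{GUE}} = \operatorname{sign}(P_n'/P_n)$, so $\tfrac12\tfrac{d^2}{dx^2}\varpi_n^{\mathrm{GUE}}(x)\,dx$ is precisely the signed measure $\sum_j\delta_{\lambda_j^{(n)}} - \sum_j\delta_{\lambda_j^{*,(n)}}$ where $\lambda_j^{*,(n)}$ are the zeros of $P_n'$; after rescaling this is $\Resc_{\sqrt n}[\rho_n^{\mathrm{GUE}} - \rho_{n-1}^{\mathrm{GUE},*}]$ up to normalisation, matching the left side of (\ref{eq:diffcrit}). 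On the right, differentiating $\Omega_\LSVK$ twice gives $2\,d\rho_\sc$ (equivalently $\tfrac12\tfrac{d^2}{dx^2}\Omega_\LSVK(x)dx = d\rho_\arcsin$ after accounting for the precise constants in the Markov normalisation — I would state this cleanly from \ref{hom}), whose rescaled fluctuation correction is $\tfrac12\tfrac{d^2}{dx^2}\Delta_\tr^\Mar(x)\,dx$. Since differentiation is continuous on $\mathfrak B_{[-2,2]}'$ (it is the adjoint of differentiation on the nuclear space $\mathfrak B_{[-2,2]}$), convergence in distribution is preserved, giving (\ref{eq:diffcrit}).

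The main obstacle I anticipate is not any single estimate but rather bookkeeping at the two boundary issues. The first is the indicator-$\mathbbm1_{[-2,2]}$ subtlety flagged after Proposition~\ref{fluct3}: one must check that $\int x\,d\rho_n^{\mathrm{GUE}}(x) = \int x\,d\rho_\sc(x) + o(1/n)$ so that the indicator can be dropped (or else carry it along); for the GUE $\int x\,d\rho_n^{\mathrm{GUE}} = \tfrac1n\tr H_n$ is a centred Gaussian of variance $\sim 1/n$, hence of size $\sim n^{-1/2}$, which is \emph{not} $o(1/n)$ — so in fact the indicator cannot be dropped here, exactly as in \cite{ES} and Figure~\ref{fig}, and this has to be stated carefully in the conclusion. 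The second is matching the normalisation constants between the ``$\exp\{-\tfrac12\int\log(1-zx)d\omega'\}$'' form of the Markov transform in \ref{hom} and the ``$\exp\{-\int\log(z-x)d(\tau_+-\tau_-)\}$'' form in (\ref{eq:pfd'}): the factor of $2$ appearing in front of $\omega'$ versus the derivative of $|x-a|$ jumping by $2$ is precisely what produces the $\tfrac12\tfrac{d^2}{dx^2}$ (rather than $\tfrac{d^2}{dx^2}$) in (\ref{eq:diffcrit}), and getting this constant right is the only place real care is needed.
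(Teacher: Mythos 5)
Your argument is exactly the paper's (the entire proof there is ``appealing to a stochastic version of Proposition~\ref{pi}''), and your bookkeeping is correct: feeding $c_k=\tfrac{\sqrt k}{2}g_k$ from Johansson's theorem into (\ref{eq:conclude}) produces a process equal in law to $\Delta_\tr^\Mar$ (the signs in front of $g_k$, $k\ge2$, come out opposite to those in the displayed definition, but since the $g_k$ are independent and symmetric this is immaterial for convergence in distribution), and the second display then follows by applying $\tfrac12\tfrac{d^2}{dx^2}$, which is indeed continuous on $\mathfrak B_{[-2,2]}'$. The only slips are minor: the clause ``differentiating $\Omega_\LSVK$ twice gives $2\,d\rho_\sc$'' should read $2\,d\rho_{\arcsin}$, i.e.\ $\tfrac12\Omega_\LSVK''\,dx=d\rho_{\arcsin}$, as your own parenthetical and (\ref{eq:es_glob''}) correctly state; and the centring term should be assessed after rescaling, where $\int x\,d\Resc_{\sqrt n}\rho_n^{\mathrm{GUE}}$ is of order $1/n=\epsilon_n$ rather than $n^{-1/2}$ --- which still supports your (correct) conclusion that the indicator cannot be dropped.
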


\subsubsection{}\label{lp}

Now let
\[\begin{split}
\Delta_{1}(x) &= \sum_{k \geq 1} g_k U_k(x/2) \frac{\sqrt{4-x^2}}{2\pi}
= \sum_{k \geq 1} \frac{g_k}{2} \, \frac{2T_{k}(x) - 2T_{k+2}(x)}{\pi \sqrt{4-x^2}}\\
\Delta_{1}^\Mar(x) &= -\frac{2g_1}\pi \arcsin\frac{x}{2} - \sum_{k \geq1} \frac{2(g_{k}-g_{k+2})}{k+1} U_k(x/2)\frac{\sqrt{4-x^2}}{2\pi}~. \end{split}\]
We mention that $\Delta_1$ can also be described as follows:
\begin{equation}\label{eq:lp_ito}\Delta_1(x) dx=  \sqrt{\rho_\sc'(x)} dB(x) -\left\{ \int_{-2}^2 \sqrt{\rho_\sc'(y)} dB(y)  \right\} \rho_\sc'(x) dx~, \end{equation}
where $\rho_\sc'(x) = \frac{1}{2\pi} \sqrt{(4-x^2)_+}$, and $B(x)$ is the Brownian motion.
While $\Delta_1(x) dx$ is a generalised Gaussian process, its integral
\[ \Delta_1^\int (x) = \int_{-2}^{\min(x,2)}\Delta_1(y) dy \]
has a continuous modification, and so does $\Delta_{1}^\Mar(x)$.

\smallskip
The fluctuations of the measure $\mu_n$ are described by the following result
of Lytova--Pastur \cite{LP} (where a stronger topology defined by test
functions in $C^1$ is used, see further \ref{estop}).
\begin{thm*}[Lytova--Pastur \cite{LP} /weak form/] For the GUE,
\begin{equation}\label{eq:lp1} 
\sqrt{n} \, d(\Resc_{\sqrt{n}}[\mu_n^{\mathrm{GUE}}](x) - \rho_\sc(x)) \,\overset{\operatorname{distr}}\longrightarrow\, \Delta_{1}(x) dx\end{equation}
as random functionals on $\mathfrak B_{[-2,2]}$.
\end{thm*}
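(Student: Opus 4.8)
The plan is to exploit the unitary invariance of the GUE, which turns the spectral measure at $\delta_n$ into a \emph{randomly re-weighted} empirical measure whose weights are independent of the eigenvalues. Diagonalising $H_n = V\Lambda V^{*}$ with $\Lambda=\operatorname{diag}(\lambda_1,\dots,\lambda_n)$ and $V$ Haar on $U(n)$, one has
\[ \mu_n^{\mathrm{GUE}} \;\overset{d}{=}\; \sum_{j=1}^n q_j\,\delta_{\lambda_j}~, \qquad q_j=\frac{E_j}{S}~,\quad S=\sum_{i=1}^n E_i~, \]
where $(E_i)$ are i.i.d.\ standard exponential and independent of $(\lambda_j)$ (the vector $(q_j)$ is $\mathrm{Dirichlet}(1,\dots,1)$, being the squared moduli of the entries of a Haar-random unit vector in $\mathbb C^n$). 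After $\Resc_{\sqrt n}$ I write $\tilde\lambda_j=\lambda_j/\sqrt n$; by Proposition~\ref{wig1} their empirical distribution tends a.s.\ to $\rho_\sc$, and with probability $1-o(1)$ no $\tilde\lambda_j$ escapes a fixed neighbourhood of $[-2,2]$.

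Since the topology of $\mathfrak B'_{[-2,2]}$ is generated by the pairings $F\mapsto\langle F,(\bullet-z)^{-1}\rangle$, $z\notin[-2,2]$, it suffices --- modulo a routine tightness check based on the uniform variance bound obtained below and on the $O(n^{-2})$ accuracy of $\mathbb E\,\Resc_{\sqrt n}\rho_n^{\mathrm{GUE}}\approx\rho_\sc$ (note $\mathbb E\mu_n^{\mathrm{GUE}}=\mathbb E\rho_n^{\mathrm{GUE}}$ as $\mathbb E q_j=1/n$) --- to establish convergence of the finite-dimensional marginals of $\phi\mapsto X_n(\phi):=\sqrt n\int\phi\,d(\Resc_{\sqrt n}\mu_n^{\mathrm{GUE}}-\rho_\sc)$; by the Cram\'er--Wold device I may treat a single analytic $\phi$ at a time and recover covariances by polarisation. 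Split
\[ X_n(\phi)=\sqrt n\int\phi\,d(\Resc_{\sqrt n}\rho_n^{\mathrm{GUE}}-\rho_\sc)\;+\;\sqrt n\sum_j\Big(q_j-\tfrac1n\Big)\phi(\tilde\lambda_j)~. \]
By Johansson's central limit theorem (Theorem~\ref{joh}) the first term is $\sqrt n\cdot O_P(1/n)\to0$ in probability, so the limit is carried by the second. Writing $q_j-1/n=(E_j-S/n)/S$ and $E_j-S/n=(E_j-1)-\tfrac1n\sum_i(E_i-1)$, it equals
\[ \frac{n}{S}\left[\frac1{\sqrt n}\sum_j(E_j-1)\phi(\tilde\lambda_j)\;-\;\Big(\frac1{\sqrt n}\sum_i(E_i-1)\Big)\frac1n\sum_j\phi(\tilde\lambda_j)\right]~, \]
with $n/S\to1$ a.s. Conditionally on $(\tilde\lambda_j)$, the bracket is an affine functional of the i.i.d.\ centred variables $E_i-1$, so the Lindeberg central limit theorem (its hypotheses hold on the good event, where $\max_j|\phi(\tilde\lambda_j)|$ is bounded) together with the laws of large numbers $\tfrac1n\sum_j\phi(\tilde\lambda_j)\to\int\phi\,d\rho_\sc$ and $\tfrac1n\sum_j\phi(\tilde\lambda_j)^2\to\int\phi^2\,d\rho_\sc$ (Proposition~\ref{wig1}) shows it converges to a centred Gaussian; tracking the conditional second moments gives limiting variance $\int\phi^2\,d\rho_\sc-\big(\int\phi\,d\rho_\sc\big)^2$, hence by polarisation $X_n$ converges to the centred Gaussian field with covariance $\int\phi\psi\,d\rho_\sc-\big(\int\phi\,d\rho_\sc\big)\big(\int\psi\,d\rho_\sc\big)$.

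It remains to match this field with $\int\phi\,\Delta_1(x)\,dx$. From the It\^o representation (\ref{eq:lp_ito}), $\int\phi\,\Delta_1\,dx=\int\phi\sqrt{\rho_\sc'}\,dB-\big\{\int\sqrt{\rho_\sc'}\,dB\big\}\int\phi\,d\rho_\sc$ is centred Gaussian with exactly that covariance, since $\int\sqrt{\rho_\sc'}\,dB$ has variance $1$ and covariance $\int\phi\,d\rho_\sc$ with $\int\phi\sqrt{\rho_\sc'}\,dB$; equivalently, via $\Delta_1(x)=\sum_{k\ge1}g_kU_k(x/2)\frac{\sqrt{4-x^2}}{2\pi}$, the orthogonality $\int U_k(x/2)U_l(x/2)\,d\rho_\sc=\delta_{kl}$ and $U_0\equiv1$ give $\sum_{k\ge1}\big(\int\phi\,U_k(x/2)\,d\rho_\sc\big)\big(\int\psi\,U_k(x/2)\,d\rho_\sc\big)=\int\phi\psi\,d\rho_\sc-\big(\int\phi\,d\rho_\sc\big)\big(\int\psi\,d\rho_\sc\big)$. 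The main obstacle is the \emph{uniformity in the spectral realisation} of the conditional central limit theorem: the Lindeberg estimate and the two laws of large numbers must be run along almost every eigenvalue sequence, which forces one to restrict to the overwhelmingly likely event that no $\tilde\lambda_j$ leaves a fixed neighbourhood of $[-2,2]$ and to invoke the almost-sure form of Proposition~\ref{wig1} together with GUE edge concentration. The independence of weights and eigenvalues used here is special to the GUE; for general Wigner matrices the weights are no longer Dirichlet and, as recorded in \ref{lp2ors}, the limit need not be Gaussian (one may then instead follow Lytova--Pastur and work directly with the resolvent entry $(H_n-z)^{-1}_{nn}$).
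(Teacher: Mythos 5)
Your argument is correct, but it is not the route the paper takes: the paper does not prove this statement at all, it quotes it from Lytova--Pastur \cite{LP}, whose original proof runs through a resolvent/martingale analysis of the entry $(H_n-z)^{-1}_{nn}$ and applies to general Gaussian ensembles. What you do instead --- diagonalise $H_n=V\Lambda V^*$, use the independence of the Haar eigenvector matrix from the spectrum to realise $\mu_n^{\mathrm{GUE}}$ as a $\mathrm{Dirichlet}(1,\dots,1)$ reweighting $q_j=E_j/\sum_i E_i$ of the empirical measure, observe that the eigenvalue-fluctuation term is killed because Johansson's CLT puts it on scale $1/n$ rather than $1/\sqrt n$, and then run a conditional Lindeberg CLT in the exponential weights along almost every eigenvalue realisation --- is a genuinely different, GUE-specific proof; it is in fact precisely the mechanism the author alludes to in \ref{estop} (``a functional central limit theorem for sums of independent exponentially distributed random variables''), and pushed slightly further, with a functional CLT for the partial sums of the $E_i$ in place of the one-dimensional Lindeberg theorem, it yields the topologically stronger statement (\ref{eq:upgrade}). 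Your covariance computation $\int\phi\psi\,d\rho_\sc-\int\phi\,d\rho_\sc\int\psi\,d\rho_\sc$ and its identification with the law of $\int\phi\,\Delta_1\,dx$, either through (\ref{eq:lp_ito}) or through the orthonormality of $\{U_k(x/2)\}_{k\ge0}$ in $L^2(\rho_\sc)$, are both right, and the points you leave as routine (tightness in $\mathfrak B_{[-2,2]}'$, which reduces to a uniform variance bound for the resolvent pairings, and the passage from conditional to unconditional convergence, which follows by dominated convergence since the limit law does not depend on the eigenvalue realisation) really are routine. The trade-off between the two approaches is as you say: eigenvector--eigenvalue independence is special to the invariant ensembles, consistent with the non-Gaussian limits for general Wigner matrices recorded in \ref{lp2ors}, whereas the Lytova--Pastur resolvent route (and its extensions \cite{LP2,ORS}) survives the loss of invariance at the price of a substantially heavier analysis.
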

This theorem and Proposition~\ref{pi} imply:
\begin{cor*}[Erd\H{o}s and Schr\"{o}der \cite{ES} /special case, weak form/]
\begin{align}
\label{eq:lpes}
&\sqrt{n} (\Resc_{\sqrt{n}}[\omega_n^{\mathrm{GUE}}](x) - \Omega_\LSVK(x))dx 
\,\overset{\operatorname{distr}}\longrightarrow\, \Delta_{1}^\Mar(x) dx\\
\label{eq:diffsub}
&\sqrt n \, d \Resc_{\sqrt{n}} [\rho_n^{\mathrm{GUE}} -\rho_{n-1}^{\mathrm{GUE}}](x)  
\,\overset{\operatorname{distr}}\longrightarrow\, \frac12 \frac{d^2}{dx^2} \Delta_{1}^\Mar(x)dx~.
\end{align}
\end{cor*}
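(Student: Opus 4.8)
The plan is to feed the Lytova--Pastur theorem into the stochastic version of Proposition~\ref{pi}, which is designed precisely for this, and then to recover (\ref{eq:diffsub}) by differentiating (\ref{eq:lpes}) twice; no new analysis is involved, the content being a translation of normalisations. First I would put the Lytova--Pastur limit into the form demanded by the hypothesis (\ref{eq:assume}). Using the elementary identity $U_k(x/2)\tfrac{\sqrt{4-x^2}}{2\pi}=\tfrac{T_k(x/2)-T_{k+2}(x/2)}{\pi\sqrt{4-x^2}}$ (immediate at $x=2\cos\theta$), the density $\Delta_1(x)=\sum_{k\geq1} g_k\,U_k(x/2)\tfrac{\sqrt{4-x^2}}{2\pi}$ rewrites as $\sum_{k\geq1}\tfrac{2c_k T_k(x/2)}{\pi\sqrt{4-x^2}}$, where the $c_k$ are fixed linear combinations of the $g_j$ (with $c_1,c_2$ multiples of $g_1,g_2$ and $c_k$ a multiple of $g_k-g_{k-2}$ for $k\geq3$); since a.s.\ $\limsup_k|c_k|^{1/k}=1$, they are admissible in Proposition~\ref{pi}. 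Because the Markov transform intertwines the rescalings $\Resc_{\sqrt n}$ on measures and on diagrams (read off the defining relation in \ref{hom}), one has $\Mar^{-1}(\Resc_{\sqrt n}\mu_n^{\mathrm{GUE}})=\Resc_{\sqrt n}\omega_n^{\mathrm{GUE}}$, so the Lytova--Pastur convergence $\sqrt n\,d(\Resc_{\sqrt n}[\mu_n^{\mathrm{GUE}}]-\rho_\sc)\to\Delta_1\,dx$ in $\mathfrak B_{[-2,2]}$ is exactly the assumption (\ref{eq:assume}) with $\epsilon_n=n^{-1/2}$.

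Next I would pass to a Skorokhod coupling making this convergence almost sure (cf.\ \ref{clt}), invoke the deterministic Proposition~\ref{pi} to obtain (\ref{eq:conclude}), and then substitute the $c_k$ and reindex the resulting Chebyshev series; the outcome is the series defining $\Delta_1^\Mar$, and transferring back to convergence in distribution gives (\ref{eq:lpes}). Here I would flag that the indicator $\mathbbm{1}_{[-2,2]}$ implicit on the left of (\ref{eq:lpes}) cannot be suppressed: the first moment $\int x\,d\Resc_{\sqrt n}[\mu_n^{\mathrm{GUE}}]=n^{-1/2}(H_n)_{nn}$ is of exact order $\epsilon_n$ rather than $o(\epsilon_n)$, so the criterion in the remark after Proposition~\ref{fluct3} fails --- this is the very discrepancy displayed in Figure~\ref{fig}.

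Finally, (\ref{eq:diffsub}) follows from (\ref{eq:lpes}) by applying $\tfrac12\tfrac{d^2}{dx^2}$ to both sides. This is legitimate because $\tfrac{d^2}{dx^2}$ is continuous on $\mathfrak B_{[-2,2]}'$ (test against $(\bullet-z)^{-1}$ and differentiate twice in $z$ inside the limit), and because, by the construction of $\omega_n^{\mathrm{GUE}}$ in \ref{guedef}, $\tfrac12\tfrac{d^2}{dx^2}$ of the rescaled diagram $\Resc_{\sqrt n}\omega_n^{\mathrm{GUE}}$ encodes the (appropriately normalised) difference of the eigenvalue counting measures of $H_n$ and of its principal submatrix, while $\tfrac12\Omega_\LSVK''=\rho_\arcsin$; subtracting produces (\ref{eq:diffsub}), the ``submatrix'' twin of the differentiation already carried out for the critical points in \ref{joh}. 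There is no genuinely hard step --- all the analysis sits inside Proposition~\ref{pi} and the quoted Lytova--Pastur theorem, whose stated topology already matches $\mathfrak B_{[-2,2]}$, so no extra approximation is required; the only points demanding care are the bookkeeping of Chebyshev normalisations in the first and last steps and the first-moment subtlety just noted (overlooking it would lead to an incorrect statement). Upgrading (\ref{eq:lpes})--(\ref{eq:diffsub}) to the $C^1$ topology of \cite{LP} would require a correspondingly strengthened form of Proposition~\ref{pi}, which we do not pursue here.
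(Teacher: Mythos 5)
Your proposal is correct and takes exactly the route the paper does: the paper's entire proof of this corollary is the one-line assertion that Theorem~\ref{lp} together with (the stochastic version of) Proposition~\ref{pi} implies it, and your write-up merely makes explicit the Chebyshev bookkeeping, the Skorokhod step from \ref{clt}, the first-moment/indicator caveat from the remark after Proposition~\ref{fluct3}, and the double differentiation yielding (\ref{eq:diffsub}). One bookkeeping remark: substituting $c_2=g_2/2$ into (\ref{eq:conclude}) also produces a $U_0$ term $2g_2\,U_0(x/2)\frac{\sqrt{4-x^2}}{2\pi}$, which agrees with the stated $\Delta_1^{\Mar}$ only if its sum is read as starting from $k=0$ with the convention $g_0:=0$ --- a discrepancy in the paper's displayed formula rather than in your argument.
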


\subsection{On generalisations}

\subsubsection{}\label{joh_gen}

Johansson's Theorem~\ref{joh} has been extended beyond the Gaussian Unitary
Ensemble, see \cite[Section 18.4]{PShch}, \cite[Section 2.1.7]{AGZ} and references therein. We quote (\ref{eq:joh_gen}) below from the work of Bai and Yao \cite{BaiYao}; 
related results were proved earlier by Khorunzhy, Khoruzhenko, and Pastur \cite{KKP}.

 Let  $H$ be a Wigner matrix as in \ref{wig1}; 
let us assume that the matrix entries have finite fourth moments, and that 
$\mathbb{E} H(1,2)^2 = 0$ (the second condition can be omitted at the expense of
making the formul{\ae} more cumbersome). Then
\begin{equation}\label{eq:joh_gen}
n \, d(\Resc_{\sqrt{n}}[\rho_n^{\mathrm{GUE}}](x) - \rho_\sc(x)) \,\overset{\operatorname{distr}}{\longrightarrow} \,  \sum_{k \geq 1}\frac{a_k g_k + b_k}{2} \, \frac{2 T_k(x/2)}{\pi\sqrt{4-x^2}}
\end{equation}
as random functionals on $\mathfrak{B}_{[-2,2]}'$, where
\[
\begin{split}
&a_1 = \sqrt{\mathbb E H(1,1)^2}~, \,\, a_2 =  \sqrt{2\mathbb{E}(|H(1,2)|^2-1)^2}~, \,\, a_3 = \sqrt{3}~, \,\, 
a_4 = \sqrt{4}~, \,\, \cdots \\
& b_2 = \mathbb{E}H(1,1)^2 - 1~, \, \, b_4 = \mathbb{E}(|H(1,2)|^2-1)^2-1~,\,\,
b_1 = b_3 = b_5 = b_6 = \cdots = 0~.
 \end{split}\]
 Using (\ref{eq:joh_gen}) in place of Theorem~\ref{joh}, 
Corollary~\ref{joh} is extended as follows:
\begin{equation}\label{eq:joh_gen_cor}\begin{split}
&n \, (\Resc_{\sqrt{n}}[\varpi_n^{\mathrm{GUE}}](x) - \Omega_\LSVK(x))dx \,
\overset{\operatorname{distr}}\longrightarrow \, - \frac{2}{\pi} a_1 g_1 \arcsin(x/2) dx \\
&\qquad- \sum_{k\geq 0} \frac{2 \sqrt{k+2}}{k+1} (a_{k+2} g_{k+2} + b_{k+2}) \, \frac{U_{k}(x/2)\sqrt{4-x^2}}{2\pi}
 dx
\end{split}\end{equation}

\subsubsection{}\label{johtop}
It may also be possible
to strengthen the topology in Corollary~\ref{joh}, using as an input
a topologically stronger central limit theorem for linear statistics, such as the 
one proved by Shcherbina \cite{Shcherbina} (see further Sosoe and Wong \cite{SW}). 
To follow this strategy, one 
needs a version of Proposition~\ref{pi}  for non-analytic
test functions; this can be obtained, for example,  by
the method of pseudoanalytic extension \cite{Dyn1,Dyn2,HS}.

\subsubsection{}\label{lp2ors}

As for the fluctuations of the spectral measure, 
Theorem~\ref{lp} of Lytova--Pastur was extended beyond the Gaussian
ensembles by Lytova--Pastur \cite{LP2} and Pizzo--Renfrew--Soshnikov \cite{ORS}. We quote one of the results in \cite{ORS}:
for Wigner matrices as in \ref{wig1}
having finite fourth moments,
\begin{equation}\label{eq:lp2ors} \sqrt{n} \, d(\Resc_{\sqrt{n}}[\mu_n^{\mathrm{GUE}}](x) - \rho_\sc(x)) \,\overset{\operatorname{distr}}\longrightarrow \,\sum_{k \geq 1} h_k U_k(x/2) d\rho_\sc(x)~,\end{equation}
where $h_k$ are independent random variables: $h_1= - H(n,n)$, $h_2$ is centred Gaussian with variance depending on the moments of 
$H(1, 2)$, and $(h_k)_{k \geq 3}$ are centred Gaussian with variance depending 
on the symmetry class of $H$ (cf.\ (\ref{eq:esvar}) below for explicit formul{\ae}). The topology is stronger than that of $\mathfrak B_{[-2,2]}'$, see \ref{estop} below.

Applying Proposition~\ref{pi}, we deduce that, for $H$ as above,
\begin{equation}\label{eq:lp2orscor}\begin{split} 
&\sqrt{n} (\Resc_{\sqrt{n}}[\omega_n^{\mathrm{GUE}}](x) - \Omega_\LSVK(x))dx \\
&\qquad\overset{\operatorname{distr}}\longrightarrow \,
- \frac{2h_1}{\pi} \arcsin \frac{x}{2} \, dx - \sum_{k \geq 1} \frac{2(h_k - h_{k+1})}{k+1} U_k(x/2)d\rho_\sc(x)
\end{split}\end{equation}
in the topology of random functionals on $\mathfrak B_{[-2,2]}$. 

\subsubsection{}\label{estop}

The result of Erd\H{o}s and Schr\"oder \cite[Theorem 3]{ES} is topologically
much stronger than our statements in (\ref{eq:lpes}) and (\ref{eq:lp2orscor}), and applies, 
for example, to test functions which are indicators of intervals. This raises
the question whether the topology can be strengthened in the central
limit theorems (\ref{eq:lp1}), (\ref{eq:lp2ors}) for the spectral measure.

In the case of the GUE one can use the convergence to the semicircle on scale $n^{-1/2}$ and a functional central limit theorem for sums of independent exponentially distributed random
variables to obtain the following version of Theorem~\ref{lp}:
\begin{equation}\label{eq:upgrade}
\sqrt{n} \, \left(\mu_n^{\mathrm{GUE}}(x\sqrt{n} ) - \rho_\sc(x) \right)\overset{\operatorname{distr}}\longrightarrow \Delta_{1}^\int(x)
\end{equation}
as random continuous functions (here we identify measures with their cumulative
distribution functions). 

In the case of general Wigner matrices with finite fourth moments, the results of \cite{ORS} imply that (\ref{eq:lp2ors}) holds in the topology of functionals
on test functions $\phi \in C^7[-2-\delta,2+\delta]$.
It is also proved in \cite{ORS} that for Wigner
matrices the entries of which satisfy the Poincar\'e inequality the limit 
theorem (\ref{eq:lp2ors}) holds for Lipschitz test functions. This topology is still
(most probably) insufficient to recover the results of \cite{ES} in their full strength.

\subsubsection{}\label{esfluct}

In a remark in Section~\ref{intr:fl} (following (\ref{eq:lp})), we suggested that the results of
 \cite{ES}  could be pulled through the Markov correspondence
to obtain a version of  (\ref{eq:lp2ors}) in stronger topology. However,
Erd\H{o}s and Schr\"oder found \cite{ES2} a direct approach to the latter problem.

For the special case of Wigner matrices as in \ref{wig1}, their result asserts the
following (the more general setting of \cite{ES2} applies to matrices the  entries are not necessarily identically distributed): 
if $H(1,1)$ and
$H(1,2)$ have moments of arbitrary order, then
\begin{equation}\label{eq:esvar}\begin{split} 
&\sqrt{n}  (d\Resc_{\sqrt{n}} \mu_n^{H}(x) - d\rho_\sc(x)) \,
\overset{\operatorname{distr}}{\longrightarrow}\,
 d\rho_\sc(x) \Bigg[ - H(n,n) \, U_1(x/2) \\
 &+ \sqrt{\mathbb{E} |H(1,2)|^4 -1} \, g_2 U_2(x/2)  +
 \sum_{k \geq 3}  \sqrt{1 + (\mathbb{E} H(1,2)^2)^k} \, g_k U_k(x/2) \Bigg]~, \end{split}\end{equation}
 where  $g_k$ are independent standard Gaussian,
and the topology is defined by test functions $\phi: \mathbb R \to \mathbb R$ of bounded variation in $[-3,3]$.
For the convenience of the reader, we have not modified the  phrasing of the remark in Section~\ref{intr:fl}.

\section{Random partitions and random matrices}\label{S:part}

\subsection{Plancherel growth}

\subsubsection{}\label{young}

A partition $\lambda^n$ of $n \geq 0$ (denoted $\lambda^n \vdash n$)
is a non-increasing sequence
\[ \lambda^n = (\lambda^n_1 \geq \lambda^n_2 \geq \lambda^n_3 \geq \cdots)\]
of non-negative integers adding up to $n$. We identify a partition $\lambda^n$
with the Young diagram with rows of lengths $\lambda^n_1, \lambda^n_2, \cdots$,
i.e.\ the union of unit squares (boxes) with a corner at 
\[ \left\{(j, k) \, \mid \, 1 \leq j~, \, 1 \leq k \leq \lambda^n_j \right\}~. \]
For example, the partition $19 = 7 + 4 + 4 + 3 + 1$ of $19$ corresponds to
the Young diagram in Figure~\ref{fig:young} (left).
The content of a box $\square=(j,k)$ is by definition $\ct(\square) = k-j$. For two
diagrams $\lambda^n \vdash n$ and $\lambda^{n+1} \vdash n+1$ we write
$\lambda^n \nearrow \lambda^{n+1}$ if $\lambda^{n+1}$ is obtained by
adding a single box $\lambda^{n+1} \setminus \lambda^n$ to $\lambda^n$.
A box $\square\in \lambda^n$ is called an inner corner of $\lambda^n$
if $\lambda^n \setminus \square$ is a partition (Young diagram). A box $\square \notin \lambda^n$
is called an outer corner if $\lambda^n \cup \square$ is a partition. The continual diagram
$\omega[\lambda^n]$ associated to a partition $\lambda^n$ is defined by
\[\begin{split}
&\omega'(x) =-1
+ 2\, \# \left\{ \text{outer corners $\square$ with $\ct(\square) < x$} \right\} \\
&\qquad\qquad\quad-2 \, \#  \left\{ \text{inner corners  $\square$ with $\ct(\square) < x$} \right\} \\
&\omega(x) = |x| \quad \text{for sufficiently large $x$}
\end{split} \]
See Figure~\ref{fig:young} (right).

\subsubsection{}\label{plan}
A standard Young tableau is a chain
\begin{equation}\label{eq:chain}
\tab = (\lambda^0 = \varnothing \nearrow \lambda^1 \nearrow \lambda^2 \nearrow
\cdots \nearrow \lambda^n)~.\end{equation}
The dimension $\dim \widetilde\lambda^n$ of a Young diagram $\widetilde\lambda^n$ is the number of chains (\ref{eq:chain}) with $\lambda^n = \widetilde\lambda^n$. In the representation theory of
the symmetric group, the irreducible representations are in one-to-one correspondence
with the partitions of $n$, and $\dim \lambda$ equals the dimension of the irreducible
representation corresponding to $\lambda$. One has (see \ref{repr} below):
\begin{equation}\label{eq:dims} \dim \lambda^n = \sum_{\lambda^{n-1} \nearrow \lambda^n} \dim \lambda^{n-1}~, \quad
\sum_{\lambda^n \vdash n} \dim^2 \lambda^n = n! \end{equation}

\subsubsection{}\label{tran}
Consider the space $\Tab_\infty$ of infinite sequences
\begin{equation}\lambda^0 = \varnothing \nearrow \lambda^1 \nearrow \lambda^2 \nearrow
\cdots \nearrow \lambda^n \nearrow \cdots
\end{equation}
equipped with product topology. Define a probability distribution on $\Tab_\infty$
by
\[ \mathbb{P} \left\{ \lambda^1 = \widetilde\lambda^1~, \cdots,
\lambda^n = \widetilde\lambda^n \right\} = \frac{\dim \widetilde\lambda^n}{n!} \]
for any $n$ and any chain $\widetilde\lambda^0\nearrow\cdots\nearrow\widetilde\lambda^n$. The corresponding process is called Plancherel growth, and the distribution
\[ \mathbb{P}\left\{
\lambda^n = \widetilde\lambda^n \right\} = \frac{\dim^2 \widetilde\lambda^n}{n!} \]
of $\lambda^n$ --- the Plancherel measure.
The transition measure of a diagram $\widetilde\lambda^n \vdash n$ is the
probability measure
\begin{equation}\label{eq:trm}
\mu[\widetilde\lambda^n] = \sum_{\text{outer corners $\square$}}
\frac{\mathbb{P}\left\{ \lambda^{n+1} = \widetilde\lambda^n \cup \square~, \, \lambda^n = \widetilde\lambda^n\right\} }{\mathbb{P}\left\{ \lambda^n = \widetilde\lambda^n\right\} } \,\, \delta_{\ct(\square)}
\end{equation}
This measure was introduced by Kerov \cite{Kerov_tr}, who proved  that $\Mar \omega[\lambda^n] = \mu[\lambda^n]$.

\begin{thm*}[Logan--Shepp \cite{LS}, Vershik--Kerov\cite{VK1,VK2}; Kerov \cite{Kerov_sep}]
\[ \Resc_{\sqrt{n}} \left\{ \omega[\lambda^n] \right\}\to \Omega_\LSVK \quad \text{and} \quad \Resc_{\sqrt{n}} \mu[\lambda^n] \to \rho_\sc \]
(in distribution and almost surely).
\end{thm*}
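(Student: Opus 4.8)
The plan is to push the statement through the Markov correspondence, reducing it to the semicircle law for the transition measure $\mu[\lambda^n]$, and to prove that law by the method of moments (this is Kerov's approach; alternatively one could simply quote the Logan--Shepp--Vershik--Kerov theorem, recalled as (\ref{eq:lsvk0}), for the diagram assertion and deduce the measure assertion from the reduction).

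First I would record that $\Mar$ commutes with the rescaling operators: for $\tilde\omega=\Resc_L\omega$ one has $\tilde\omega'(x)=\omega'(Lx)$, so $d\tilde\omega'$ is the push-forward of $d\omega'$ under $y\mapsto y/L$ and $\int\log(1-zx)\,d\tilde\omega'(x)=\int\log(1-(z/L)y)\,d\omega'(y)$; the defining relation of Theorem~\ref{hom} then becomes the one for $\Resc_L\mu$, i.e.\ $\Mar\,\Resc_L\omega=\Resc_L\,\Mar\omega$. Together with $\Mar\,\omega[\lambda^n]=\mu[\lambda^n]$ (Kerov, recalled in \ref{tran}) and $\Mar\,\Omega_\LSVK=\rho_\sc$ (the Example in \ref{hom}), this gives the pointwise-in-$\lambda^n$ identity of random diagrams $\Resc_{\sqrt n}\,\omega[\lambda^n]=\Mar^{-1}\bigl(\Resc_{\sqrt n}\,\mu[\lambda^n]\bigr)$. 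Since $\Mar^{-1}$ is a fixed homeomorphism (Theorem~\ref{hom}), the continuous mapping theorem shows that $\Resc_{\sqrt n}\,\omega[\lambda^n]\to\Omega_\LSVK$ uniformly --- in distribution, respectively almost surely --- if and only if $\Resc_{\sqrt n}\,\mu[\lambda^n]\to\rho_\sc$ weakly, in the same mode; so it suffices to prove the latter.

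Now I would run the moment method. As $\rho_\sc$ is compactly supported and hence moment-determinate, it is enough to show that for each fixed $k$ the moment $n^{-k/2}\int x^k\,d\mu[\lambda^n]$ converges to $\int x^k\,d\rho_\sc$ in the relevant mode, together with a tightness bound that will come out of the computation. Set $m_k(\lambda):=\int x^k\,d\mu[\lambda]$. By the Plancherel transition rule (\ref{tran}), $m_k(\lambda^n)=\mathbb{E}[\ct(\lambda^{n+1}\setminus\lambda^n)^k\mid\lambda^n]$ along the Plancherel growth, and by the Okounkov--Vershik description of the Gelfand--Tsetlin basis the content of the $(n{+}1)$-st box is the eigenvalue of the Jucys--Murphy element $X_{n+1}=\sum_{i=1}^{n}(i\ n{+}1)\in\mathbb{C}[S_{n+1}]$. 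Using $\bigoplus_{\nu\vdash n+1}(\dim\nu)\,V^\nu\cong\mathbb{C}[S_{n+1}]$ and that the trace of $g$ in the regular representation equals $(n{+}1)!\,\mathbbm{1}[g=e]$, one obtains
\[ \mathbb{E}\,m_k(\lambda^n)=\frac1{(n+1)!}\,\tr_{\mathbb{C}[S_{n+1}]}\bigl(X_{n+1}^k\bigr)=\#\Bigl\{(i_1,\dots,i_k)\in\{1,\dots,n\}^k:\ \textstyle\prod_{j=1}^{k}(i_j\ n{+}1)=e\Bigr\}~. \]
For odd $k$ this count is $0$ (an odd permutation cannot be the identity); for $k=2\ell$ the standard noncrossing-pair-partition analysis --- the combinatorial core of Wigner's semicircle law, here for the "star" --- identifies it as a polynomial in $n$ of degree $\ell$ with leading coefficient the Catalan number $C_\ell$, and in particular $\mathbb{E}\,m_{2\ell}(\lambda^n)\le(2\ell)^{2\ell}n^\ell$, which furnishes tightness via Markov's inequality. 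Hence $n^{-k/2}\mathbb{E}\,m_k(\lambda^n)\to\int x^k\,d\rho_\sc$.

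To pass from expectations to the random measure I would estimate the variance: writing $m_k(\lambda^n)^2=\mathbb{E}[\ct(\square_1)^k\ct(\square_2)^k\mid\lambda^n]$ with two conditionally independent added boxes $\square_1,\square_2$ and averaging over the Plancherel measure yields a doubled walk count whose top-order part factorizes and cancels against $(\mathbb{E}\,m_k(\lambda^n))^2$, leaving a remainder of order $n^{k-1}$; this gives $\operatorname{Var}(m_k(\lambda^n))=o(n^k)$ and hence convergence in distribution. For the almost sure statement one sharpens this to a bound on the $2p$-th centred moment of $m_k(\lambda^n)$ (a $2p$-fold walk count), applies Borel--Cantelli along a subsequence $n_j=\lfloor j^{1+\delta}\rfloor$ with $\delta<1$, and interpolates to intermediate $n$ on the diagram side, where passing from $\lambda^{n-1}$ to $\lambda^n$ adds a bump of uniform height $\le2$ to $\omega[\lambda^n]$ and hence changes $\Resc_{\sqrt n}\,\omega[\lambda^n]$ by $O(n^{-1/2})$, with accumulated change $o(1)$ over a block; one then intersects the almost sure events over the countably many $k$. \emph{The main obstacle} is the combinatorics of the doubled and $2p$-fold walk counts --- as always in the moment method, controlling which index patterns are negligible --- together with the almost sure interpolation; by contrast the Markov-correspondence reduction and the leading-order count are routine.
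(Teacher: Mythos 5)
Your proposal follows essentially the same route as the paper: reduce the diagram statement to the semicircle law for the transition measure via the Markov correspondence (Theorem~\ref{hom} together with $\Mar\,\omega[\lambda^n]=\mu[\lambda^n]$ from \ref{tran}), and prove the latter by the moment method for the Jucys--Murphy element, exactly as sketched in \ref{kermean} and \ref{meantodistr}. The only cosmetic difference is that you count raw power moments (Catalan leading term) where the paper works with the modified Chebyshev polynomials $P_{k,n-1}(X_n)$ and non-backtracking paths; the variance and almost-sure refinements you sketch are precisely the details the paper itself omits.
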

The first part was proved by Logan--Shepp \cite{LS} and Vershik--Kerov \cite{VK1,VK2},
while the second part was deduced by Kerov \cite{Kerov_sep} using the Markov
correspondence. Vice versa,
one can first prove the second part (see Biane \cite{Bi} and
\ref{kermean}, \ref{meantodistr} below), and then deduce the first part.

\subsubsection{}\label{repr}
The construction above arises in the representation theory of the symmetric group.
Let us explain the minimum that we need below,
and refer to \cite{Kerov_book} for more details.
The left regular representation of the symmetric group $S_n$ is the space
$\mathbb{C}[S_n]$ of
linear combinations $ \sum_{\pi \in S_n} a_\pi \pi$, equipped with the action
of $S_n$ by left multiplication. It can be decomposed into a sum of
irreducible representations, so that the multiplicity of the representation
corresponding to $\lambda^n$ is equal to $\dim \lambda^n$. This implies the second equality
in (\ref{eq:dims}). 

The decomposition of an irreducible representation corresponding to 
$\lambda^n \vdash n$ into irreducible representations of $S_{n-1}$ contains
exactly the representations corresponding to $\lambda^{n-1} \nearrow \lambda^n$,
and the multiplicity of each of these is exactly one. This implies the first
equality in (\ref{eq:dims}). Furthermore, it follows that an irreducible representation corresponding to $\lambda^n\vdash n$ is decomposed
into one-dimensional subspaces corresponding to chains $\tab$ of the
form (\ref{eq:chain}).

The arguments below also rely on the properties of the Jucys--Murphy elements $X_m \in \mathbb{C}[S_n]$, which are defined as
sums of transpositions
\[ X_m = (1 \, m) + (2 \, m) + \cdots + (m-1 \, m)~. \]
Every $\tab$ is invariant under all $X_m$, and $X_m\mid_\tab = \ct(\lambda^m \setminus \lambda^{m-1})$ (see \cite{OV}, where the Jucys--Murphys elements are
used to reconstruct the representation theory of the symmetric group,
and \cite{Bi,Ok}). This implies
\[ \frac{1}{n!} \tr P(X_{n}) = \mathbb{E} \int P \, d\mu[\lambda^{n-1}] \]
for any polynomial $P$.  The use of Jucys--Murphy elements to compute moments of
the transition measure goes back to the work of Biane \cite{Bi}.

\subsubsection{}\label{kerov_clt}

Define two Gaussian processes
\begin{align}\label{eq:gkerov}
\Delta_{\mathrm{part}}(x) &= \sum_{k \geq 3} \frac{\sqrt{k-1}}{2} g_{k} \, \frac{2 T_k(x/2)}{\pi \sqrt{4-x^2}}\\
(-)\Delta_{\mathrm{part}}^\Mar(x) &= \sum_{k \geq 1} \frac{2 g_{k+2}}{\sqrt{k+1}} \, \frac{U_{k}(x/2) \sqrt{4-x^2}}{2\pi}
\end{align}

\begin{thm*}[Kerov, Ivanov--Olshanski \cite{Kerov_CLT,IO}]
\[ \sqrt{n} \left\{ \Resc_{\sqrt{n}} \omega[\lambda^n] -  \Omega_\LSVK \right\} dx \to \Delta_{\mathrm{part}}^\Mar(x) dx\]
and
\[ \sqrt{n} \left\{ d\Resc_{\sqrt{n}} \mu[\lambda^n](x) -  d\rho_\sc(x) \right\} \to \Delta_{\mathrm{part}}(x) dx\]
in distribution, as random functionals on $\mathfrak B_{[-2,2]}$.
\end{thm*}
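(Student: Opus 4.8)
The plan is to prove the two assertions in tandem, exploiting the fact that Proposition~\ref{pi} (via its stochastic version discussed in \ref{clt}) makes them equivalent: it suffices to establish the central limit theorem for the transition measure $\mu[\lambda^n]$, and the diagram statement then follows by dragging the fluctuations through the Markov correspondence, with the coefficients $c_k = \frac{\sqrt{k-1}}{2} g_k$ for $k \geq 3$ (and $c_1 = c_2 = 0$) mapping exactly to the series defining $(-)\Delta_{\mathrm{part}}^\Mar$ under formula (\ref{eq:conclude}). So the real content is the Ivanov--Olshanski limit law for $d\Resc_{\sqrt n}\mu[\lambda^n]$, and I would prove it by the moment method, following Biane and Okounkov in the combinatorial form recalled in \ref{repr}.

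First I would pass from test functions to Chebyshev polynomials: by the orthogonality relations in \ref{pi}, convergence in $\mathfrak B_{[-2,2]}$ is governed by the joint fluctuations of the normalized moments $\int 2T_k(x/2)\,d\Resc_{\sqrt n}\mu[\lambda^n](x)$ for $k \geq 1$. Using the identity from \ref{repr} that $\frac1{n!}\tr P(X_n) = \mathbb{E}\int P\,d\mu[\lambda^{n-1}]$ and the fact that $X_n$ is a sum of $n-1$ transpositions, each such moment becomes a normalized trace of a product of sums of transpositions in $\mathbb{C}[S_n]$. The combinatorial heart (the version of \cite{JS} referenced in the text) is to expand $\tr(X_n^{k})$ — or better, $\tr$ of the modified moment polynomials that correspond to $T_k$, which are adapted to make the leading semicircular behaviour manifest — as a sum over tuples of transpositions, classify the contributing tuples by the genus/cycle structure of the associated map, and show that: (i) the planar (genus-zero) contributions reproduce the semicircle moments $\int 2T_k(x/2)\,d\rho_\sc = 0$ for $k \geq 1$, i.e. the mean vanishes to leading order; (ii) the next-order term, of size $n^{-1/2}$ relative to the bulk, comes from a single extra "defect" and contributes, for $T_k$, exactly a Gaussian of variance $\frac{k-1}{4}$, where $k-1$ counts the number of cyclic alignments of two $k$-cycles (matching the combinatorial interpretation highlighted after (\ref{eq:joh_clt'})); and (iii) higher genus / more defects are $o(n^{-1/2})$. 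Independence and jointly Gaussian limits across different $k$ then follow by a standard moment-factorization argument over disjoint families of transposition-tuples, with the off-diagonal covariances vanishing because cycles of different lengths cannot be aligned.

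Concretely I would organize the computation as follows: fix $k_1,\dots,k_r$ and a multi-index, write out
\[
\mathbb{E}\prod_{i=1}^r \left[ \int 2T_{k_i}(x/2)\,d\Resc_{\sqrt n}\mu[\lambda^{n}](x)\right]
= \frac{1}{n!}\,n^{-\frac12\sum k_i}\,\tr\!\left(\prod_{i=1}^r \widehat T_{k_i}(X_{n+1})\right) + (\text{lower order}),
\]
where $\widehat T_k$ are the rescaled Chebyshev images of $X$; expand each $X_{n+1}$ as a sum of transpositions, and sum over the resulting word in transpositions using the character-free description of $\tr$ as counting factorizations of the identity. Grouping terms by the topology of the underlying gluing, one isolates the genus expansion; the $n$-counting shows the leading term is the product of means (all zero for $k_i \geq 1$) and the first correction is the sum of pair-couplings, giving the covariance structure of $\Delta_{\mathrm{part}}$. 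Finally I would invoke the stochastic version of Proposition~\ref{pi} from \ref{clt} to transfer this to $\sqrt n\{\Resc_{\sqrt n}\omega[\lambda^n] - \Omega_\LSVK\}\,dx \to \Delta_{\mathrm{part}}^\Mar(x)\,dx$, checking that $\limsup|c_k|^{1/k}=1$ almost surely (indeed a.s.\ $c_k$ grows polynomially times a Gaussian) so that (\ref{eq:assume})$\Leftrightarrow$(\ref{eq:conclude}) applies.

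The main obstacle I expect is step (ii)--(iii): controlling the genus expansion uniformly enough to be sure that the $n^{-1/2}$ correction is captured exactly by single-defect configurations and that all remaining configurations are genuinely $o(n^{-1/2})$, including after taking products over several $k_i$ and after the shift from $X_{n+1}$ acting on $\lambda^n$ versus $\lambda^{n-1}$ (an off-by-one in the rank that must be tracked carefully since it is of the same order as the fluctuation). A secondary subtlety is the bookkeeping that makes the Chebyshev (rather than power-moment) normalization come out cleanly — using $T_k$ rather than $x^k$ is what makes both the mean vanish and the variance take the tidy form $\frac{k-1}{4}$, and getting the combinatorics to reflect this requires choosing the right polynomials in the Jucys--Murphy elements (the "Kerov polynomials" / free-cumulant adapted basis) from the outset rather than converting at the end.
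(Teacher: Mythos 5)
Your proposal follows essentially the same route as the paper's own sketch in \ref{hwpart}: reduce everything to the transition-measure CLT, expand Chebyshev-adapted polynomials of the Jucys--Murphy elements as sums over non-backtracking words of transpositions (Lemma \ref{mainjmrel}), isolate the simple-cycle contributions by power counting, apply the Wick rule for joint Gaussianity, and transfer to the diagram via the stochastic version of Proposition \ref{pi}. The only slip is the source of the factor $k-1$: it counts alignments of two cycles of length $k-1$ (the cycles in $\mathcal C_{k,m}$ have $k-1$ distinct vertices, one fewer than in the Wigner case), not of two $k$-cycles.
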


The first part was proved by Kerov \cite{Kerov_CLT} in 1993. A simplified proof, based on Kerov's notes, was published by
Ivanov and Olshanski \cite{IO}. The second part was proved in \cite{IO},
independently of Theorem~\ref{kerov_clt}
(although by a similar method). In view of Proposition~\ref{clt}, the second
part implies the first part (and vice versa). In Section~\ref{s:pfs}, we  sketch a
proof of the second part,
using a combinatorial approach which was used by Biane \cite{Bi} and by Okounkov \cite{Ok}, and recently developed in \cite{JS}
As pointed out in \cite{IO}, $\Delta_{\mathrm{part}}$ is similar to $\Delta_{\mathrm{tr}}$
from \ref{joh}. It is even more similar to the process from Proposition~\ref{unimod}.
 The argument
 will highlight this similarity, and also explain the appearance of the factor
$\sqrt{k-1}$ in (\ref{eq:gkerov}) in place of $\sqrt k$ in (\ref{eq:joh_clt}).

\subsubsection{}\label{unimod}

To emphasise the similarity between random matrices and random partitions, we
consider the following special Wigner matrix:
$H = H^{\textrm{unif}} = (H(i, j))$ with
\[ H(i, i) = 0~, \quad H(i, j) \sim \operatorname{Unif}(S^1) \quad (i < j)~. \]
This ensemble is particularly convenient due to the identities (\ref{eq:chebviawalk}),
see \cite{me_surv} for a survey of applications of relations of this kind, and
historical remarks. (For other ensembles of Wigner matrices (\ref{eq:chebviawalk})
is no longer an identity, however, the difference between the left-hand side
and the right-hand side is a small quantity, cf.\ \cite{FeldS}.)
For the ensemble $H^{\textrm{unif}}$, the central limit theorem is stated as follows.

\begin{prop*}
For $H^{\textrm{unif}}$,
\[\begin{split} &n \, d(\Resc_{\sqrt{n}}[\rho_n^{\mathrm{unif}}](x) - \rho_\sc(x)) \to \sum_{k \geq 3} \frac{\sqrt{k}}{2} g_k \frac{2T_k(x/2)dx}{\pi \sqrt{4-x^2}}\\
&n \, (\Resc_{\sqrt{n}}[\varpi_n^{\mathrm{unif}}](x) - \Omega_\LSVK(x))dx \to \sum_{k \geq 1} \frac{2\sqrt{k+2}}{k+1} g_{k+2} \frac{U_k(x/2)\sqrt{4-x^2}\, dx}{2\pi}~.
\end{split}\]
\end{prop*}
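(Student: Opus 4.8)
The plan is to establish the first displayed convergence (for $\rho_n^{\mathrm{unif}}$) by the moment method, and then to deduce the second displayed convergence (for $\varpi_n^{\mathrm{unif}}$) by feeding the first into Proposition~\ref{pi}, exactly as Corollary~\ref{joh} is deduced from Johansson's theorem. For the first part, I would compute the joint moments
\[ M_k^{(n)} = \tr H_n^k = \sum_{i_1, i_2, \dots, i_k} H(i_1, i_2) H(i_2, i_3) \cdots H(i_k, i_1)~, \]
and analyse $\mathbb{E} M_k^{(n)}$ and the covariances $\operatorname{Cov}(M_k^{(n)}, M_l^{(n)})$ using the identity (\ref{eq:chebviawalk}) alluded to in \ref{unimod}, which for the $H^{\mathrm{unif}}$ ensemble makes the link between closed walks on the complete graph and Chebyshev polynomials an exact combinatorial identity rather than an approximation. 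The key structural input is that $\mathbb{E}\left[ H(i,j)^a \overline{H(i,j)}^b \right]$ vanishes unless $a = b$ (since $H(i,j) \sim \operatorname{Unif}(S^1)$), so that only walks in which every edge is traversed an equal number of times in each direction contribute; this is precisely the feature that kills the subleading corrections to the mean (giving no $b_k$-type terms, unlike in (\ref{eq:joh_gen})) and forces the limiting variance of the $k$-th Chebyshev coefficient to be $k/4$, matching $\tfrac{\sqrt{k}}{2} g_k$.

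The steps I would carry out, in order, are as follows. First, recall or re-derive the combinatorial expansion of $T_k(x/2)$ in terms of counts of closed non-backtracking-type walks, so that $\int T_k(x/2)\, d\rho_n^{\mathrm{unif}}(x)$ is expressed as a normalised walk sum; second, compute the expectation and show that after the $\Resc_{\sqrt n}$ rescaling $\mathbb{E}\int T_k(x/2)\, d(\Resc_{\sqrt n} \rho_n^{\mathrm{unif}})(x) = \int T_k(x/2)\, d\rho_\sc(x) + O(1/n)$ with no lower-order fluctuation term for $k \geq 1$ beyond order $1/n$, and identify the $1/n$ coefficient; third, compute the limiting covariance structure of the fluctuations $n\bigl(\int T_k\, d\Resc_{\sqrt n}\rho_n^{\mathrm{unif}} - \int T_k\, d\rho_\sc\bigr)$, showing it is diagonal in $k$ with variance $k/4$ and that the $k = 1, 2$ terms are suppressed (the sum starts at $k \geq 3$ because $\operatorname{tr} H_n$ and $\operatorname{tr} H_n^2 - n$ have smaller fluctuations in this centred, unit-diagonal-free ensemble); fourth, verify asymptotic Gaussianity of the vector of low-order Chebyshev coefficients by a moment/cumulant computation, using that higher cumulants of the walk sums vanish in the limit after normalisation; fifth, assemble these into convergence as random functionals on $\mathfrak{B}_{[-2,2]}$, using analyticity of the test functions to control the tail of the Chebyshev expansion (here the condition $\limsup|c_k|^{1/k} = 1$ type bound from Proposition~\ref{pi} is the relevant tail estimate). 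Finally, apply the stochastic version of Proposition~\ref{pi} (via \ref{clt}) with $c_k = \tfrac{\sqrt k}{2} g_k$ for $k \geq 3$ and $c_1 = c_2 = 0$; the arcsine term drops because $c_1 = 0$, and the $U_{k-2}$ expansion with coefficient $\tfrac{4 c_k}{k-1}$ reindexes to $\tfrac{2\sqrt{k+2}}{k+1} g_{k+2} U_k(x/2)$, which is exactly the claimed second display.

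The main obstacle I expect is the covariance computation and the verification that no anomalous lower-order terms survive: one must show carefully that in the walk expansion of $\operatorname{Cov}(\operatorname{tr} T_k(H_n/\sqrt n), \operatorname{tr} T_l(H_n/\sqrt n))$, the dominant contribution at scale $n^{-2}$ comes only from pairs of walks glued along a single shared edge (in opposite orientations), that this forces $k = l$, and that it produces the constant $k/4$. Keeping track of which gluings are of the correct combinatorial order, and confirming that the $\mathbb{E} H(i,j)^2 = 0$ property of the uniform-on-$S^1$ entries eliminates the "$\mathbb{E} H(1,2)^2$" contributions that appear for real-symmetric or general complex Wigner matrices, is the delicate part; the rest is bookkeeping that parallels the well-documented analysis for the GUE (and for which \ref{unimod} points to \cite{me_surv, FeldS}). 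A secondary, more routine obstacle is the functional-analytic step of upgrading pointwise-in-$k$ convergence of Chebyshev coefficients to convergence in $\mathfrak{B}_{[-2,2]}'$, but this follows from the same tail estimates already used in the proof of Proposition~\ref{pi}.
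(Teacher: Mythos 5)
Your route is essentially the paper's: both proofs express $\tr T_k$ of the rescaled matrix exactly through cyclically non-backtracking closed walks (the identity (\ref{eq:tkcomb}), obtained from (\ref{eq:chebviawalk}) via $U_k-U_{k-2}=2T_k$), note that $\tr T_1$ and $\tr T_2$ are in fact deterministic for $H^{\mathrm{unif}}$ (so the sum starts at $k\geq 3$), reduce by power counting to the walks $\widehat{\mathcal C}_{k,n}$ whose vertices are all distinct, establish joint Gaussianity by computing joint moments and invoking the Wick rule, and finally push the result through (a stochastic version of) Proposition~\ref{pi}; your reindexing $4c_k/(k-1)\,U_{k-2}\mapsto 2\sqrt{k+2}/(k+1)\,g_{k+2}\,U_k$ with $c_1=c_2=0$ is exactly how the second display is obtained.

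One concrete correction to the step you yourself flag as delicate: for $H^{\mathrm{unif}}$ the dominant contribution to $\operatorname{Cov}\bigl(\tr T_k,\tr T_l\bigr)$ does \emph{not} come from pairs of walks glued along a single shared edge. Since $\mathbb{E}\bigl[H(i,j)^a\,\overline{H(i,j)}^{\,b}\bigr]=\delta_{ab}$, a pair of distinct-vertex $k$- and $l$-cycles has nonzero expectation only if the two cycles occupy the same set of undirected edges, each directed edge of one matched by the reversed edge of the other; a pair sharing a single edge leaves all remaining edges traversed once in one direction only and hence contributes zero. This forces $k=l$ and the limiting variance is the number of rotational alignments of two such oppositely oriented $k$-cycles, namely $k$ --- this is precisely Claim~B in \ref{hwwig}, and it is what produces $\sqrt{k}\,g_k$ (hence $\tfrac{\sqrt{k}}{2}g_k$ after dividing by the factor $2$ in $2T_k$). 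Your own earlier observation that only edge-balanced configurations survive already implies this; if you carry out the covariance computation with the full-overlap picture rather than the single-edge gluing, the rest of your plan (vanishing of higher cumulants by power counting, tail control in $\mathfrak{B}_{[-2,2]}'$, and the application of Proposition~\ref{pi}) goes through as written.
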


\noindent In the next section, we prove this proposition
in parallel with Theorem~\ref{kerov_clt}.

\subsection{Asymptotics of moments}\label{s:pfs}

\subsubsection{}

Consider the following sequence of polynomials:
\begin{equation}\label{eq:viacheb}
	P_{l,m}(x) = (m-1)^{\frac{l}{2}}  U_l\left( \frac{x}{2\sqrt{m-1}} \right) - (m-1)^{\frac{l-2}{2}} U_{l-2}\left( \frac{x}{2\sqrt{m-1}} \right)  
	\end{equation}
with the convention $U_{-2} \equiv U_{-1} \equiv 0$. For  $\mathcal p = (u_0, u_1,
\cdots, u_l)$, set
\[\widehat \pi (\mathcal p; H) = H(u_0, u_1) H(u_1, u_2) \cdots H(u_{l-1}, u_l)~. \]
Also let
\[ \widehat{\mathcal P}_{l,n}(u, v) = \left\{ (u_0, u_1, \cdots, u_{l-1}, u_l) \in \{1,\cdots,n\}^l\,\Big|\,
u_0 = u~, \, u_l = v~, \, u_r \neq u_{r-1}, u_{r-2} \right\}~. \]
Graphically,
we may represent a tuple in $\widehat{\mathcal P}_{l,n}(u, v)$ as a path of length
$l$ from $u$ to $v$ which does not backtrack.
\begin{lemma*}[e.g. {\cite{me_surv}}]
For any Hermitian $H$ with $|H(u,v)| = 1 - \delta_{uv}$ and any $u, v$,
\begin{equation}\label{eq:chebviawalk}
P_{l,n-1}(H_n)(u, v) = \sum_{\mathcal p \in  \widehat{\mathcal P}_{l,n}(u, v)} \widehat\pi(\mathcal p; H)~,
\end{equation}
and consequently
\[ \mathbb E \prod_r P_{l_r,n-1}(H)(u_r, v_r) =
 \left\{ \underline{\mathcal{p}}  \in \prod_r \widehat{\mathcal P}_{l_r, n} \, \Big| \,
\prod_r \widehat\pi(\mathcal p_r; H)  \equiv 1 \right\}~. \]
\end{lemma*}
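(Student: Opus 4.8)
The plan is to establish the path expansion (\ref{eq:chebviawalk}) by induction on $l$, and then to deduce the moment formula by expanding the product and using independence of the entries. For the first part, write $B_l$ for the $n \times n$ matrix with entries $B_l(u,v) = \sum_{\mathcal p \in \widehat{\mathcal P}_{l,n}(u,v)} \widehat\pi(\mathcal p; H)$, the generating function of non-backtracking walks of length $l$ weighted by the products of the corresponding entries of $H_n$; the claim is $B_l = P_{l,n-1}(H_n)$. The base cases are checked directly: $B_0 = I$, $B_1 = H_n$, and $B_2 = H_n^2 - (n-1) I$, since the $n-1$ walks $u \to w \to u$ of length two are exactly the ones excluded by the non-backtracking constraint; correspondingly $P_{0,n-1}(x) = 1$, $P_{1,n-1}(x) = x$, $P_{2,n-1}(x) = x^2 - (n-1)$. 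The exceptional count $n-1$ here, rather than $n-2$ as below, is precisely what the convention $U_{-1} \equiv U_{-2} \equiv 0$ encodes, and is why the three-term recurrence only starts at $l = 3$.

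On the polynomial side, setting $Q_l(x) = (n-2)^{l/2} U_l\!\left(x/(2\sqrt{n-2})\right)$, the Chebyshev recurrence $U_l = 2y\,U_{l-1} - U_{l-2}$ yields $Q_l = x\,Q_{l-1} - (n-2)\,Q_{l-2}$, whence, since $P_{l,n-1} = Q_l - Q_{l-2}$, one gets for $l \geq 3$
\[ P_{l,n-1}(x) = x\,P_{l-1,n-1}(x) - (n-2)\,P_{l-2,n-1}(x)~. \]
On the combinatorial side I would establish the matching identity $B_l = B_{l-1} H_n - (n-2) B_{l-2}$ for $l \geq 3$ as follows. Expand $(B_{l-1} H_n)(u,v) = \sum_w B_{l-1}(u,w)\, H(w,v)$: this is the weighted count of walks of length $l$ from $u$ to $v$ whose first $l-1$ steps form a non-backtracking walk. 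Such a walk is itself non-backtracking unless its final vertex equals the vertex two steps earlier, i.e.\ unless it has the shape $\mathcal q \cdot (v \to w \to v)$ with $\mathcal q$ a non-backtracking walk of length $l-2$ from $u$ to $v$ and $w \notin \{v,\ \text{second-to-last vertex of }\mathcal q\}$; as $|H(w,v)| = 1$, the two cancelling final edges contribute weight $H(v,w) H(w,v) = 1$, so each such exceptional walk contributes the weight of $\mathcal q$, and summing over the $n-2$ admissible detour vertices $w$ gives $(n-2) B_{l-2}(u,v)$. Hence $(B_{l-1} H_n)(u,v) = B_l(u,v) + (n-2) B_{l-2}(u,v)$, and the induction closes: $B_l = B_{l-1} H_n - (n-2) B_{l-2} = P_{l-1,n-1}(H_n)\,H_n - (n-2) P_{l-2,n-1}(H_n) = P_{l,n-1}(H_n)$.

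Given (\ref{eq:chebviawalk}), the moment formula follows by linearity: $\prod_r P_{l_r,n-1}(H)(u_r,v_r) = \sum_{\underline{\mathcal p}} \prod_r \widehat\pi(\mathcal p_r; H)$, the sum over $\underline{\mathcal p} \in \prod_r \widehat{\mathcal P}_{l_r,n}(u_r,v_r)$. Each summand is a monomial $\prod_{a<b} H(a,b)^{p_{ab}}\, \overline{H(a,b)}^{q_{ab}}$ in the independent variables $\{H(a,b)\}_{a<b}$, where $p_{ab}$ and $q_{ab}$ count the traversals of the edge $\{a,b\}$ in the two directions. Since $H(a,b)$ is uniform on the unit circle, $\mathbb E\!\left[H(a,b)^p\, \overline{H(a,b)}^q\right] = \delta_{pq}$, so taking expectations kills every tuple except those with $p_{ab} = q_{ab}$ for all $a < b$ --- equivalently those for which $\prod_r \widehat\pi(\mathcal p_r; H) \equiv 1$ --- each of which contributes $1$. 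This is the asserted count.

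The step requiring the most care is the combinatorial recurrence for $B_l$: one must check that appending an edge to a non-backtracking walk either preserves the non-backtracking property or produces precisely a ``turn-around at the end'', and that the resulting overcount is $(n-2) B_{l-2}$ rather than $(n-1) B_{l-2}$ --- the discrepancy with the $l=2$ case being genuine and responsible for the slightly irregular form of $P_{2,n-1}$ noted above.
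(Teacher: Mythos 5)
Your proof is correct and follows the standard argument that the paper itself does not spell out but delegates to the cited survey \cite{me_surv}: the three-term recurrence $P_{l,n-1}(x)=xP_{l-1,n-1}(x)-(n-2)P_{l-2,n-1}(x)$ matched against the non-backtracking walk recurrence, with the irregular count $n-1$ at $l=2$ absorbed by the convention $U_{-1}\equiv U_{-2}\equiv 0$, and then orthogonality of the $S^1$-uniform entries for the expectation. Both the identification of the $(n-2)$ (not $(n-1)$) overcount for $l\geq 3$ and the reduction of the expectation to the count of tuples with $\prod_r\widehat\pi(\mathcal p_r;H)\equiv 1$ are handled correctly.
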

Graphically, the product is identically one if every edge is traversed forward
the same number of times as backward.

\subsubsection{}\label{mainjmrel}
Now we state (following \cite{JS}) the counterpart of (\ref{eq:viacheb}) for random partitions.
Let $\mathcal p = (j_1, j_2, \cdots, j_l)$ be a sequence of numbers. Denote:
\[ \pi_m(\mathcal p) = (j_1 \, m) (j_2 \, m) \cdots (j_l \, m)~.\]
Also let
\begin{equation}\label{eq:irred}
\mathcal P_{l,m} = \left\{ \mathcal p \in \{1, \cdots, m-1\}^l \, \mid \, \forall 1 \leq r \leq l-1\, j_r \neq j_{r+1}\right\} ~.
\end{equation}
The following lemma
can be checked by induction:
\begin{lemma*}[{\cite[Lemma 4.1]{JS}}]
\[ P_{l,m-1}(X_m) = \sum_{\mathcal p \in \mathcal P_{l,m}} \pi_m(\mathcal p)~, \]
and consequently
\begin{equation}  \mathbb \tr  \prod_{r} P_{l_r,m_r-1}(X_{m_r}) =
\# \left\{ \underline{\mathcal{p}}  \in \prod_r \mathcal P_{l_r, m_r} \, \Big| \,
\prod_r\pi_{m_r}(\mathcal p_r)   = 1 (= \operatorname{id}_{S_n}) \right\}~.
\end{equation}
\end{lemma*}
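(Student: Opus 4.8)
The plan is to establish the operator identity $P_{l,m-1}(X_m)=\sum_{\mathcal p\in\mathcal P_{l,m}}\pi_m(\mathcal p)$ by induction on $l$, in close analogy with the proof of the non-backtracking identity (\ref{eq:chebviawalk}) for Wigner matrices; the trace formula will then drop out by expanding the product and using linearity of $\tr$. Conceptually this is an instance of a general fact about a sum of distinct symmetries $a=\sum_{j=1}^{d}v_j$ (with $v_j^2=1$): the polynomial with the right constant term evaluated at $a$ is the sum of the non-backtracking words $v_{j_1}\cdots v_{j_l}$; here $v_j=(j\,m)$ and $d=m-1$, which is why $P_{l,m-1}$ appears. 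To make the induction work I would first pin down the recurrence of the polynomials. Setting $S_l(x)=(m-2)^{l/2}U_l\!\big(\tfrac{x}{2\sqrt{m-2}}\big)$, the Chebyshev recurrence gives $S_l(x)=xS_{l-1}(x)-(m-2)S_{l-2}(x)$, and since $P_{l,m-1}=S_l-S_{l-2}$ with $S_{-1}=S_{-2}=0$ one obtains
\[
P_{l,m-1}(x)=xP_{l-1,m-1}(x)-(m-2)P_{l-2,m-1}(x)\qquad(l\ge 3),
\]
together with $P_{0,m-1}=1$, $P_{1,m-1}(x)=x$, $P_{2,m-1}(x)=x^2-(m-1)$.

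The cases $l=0,1$ are immediate: $P_{0,m-1}(X_m)=1$ is the lone empty-product term, and $P_{1,m-1}(X_m)=X_m=\sum_{j=1}^{m-1}(j\,m)$. For $l=2$ one writes $X_m^2=\sum_{j_1,j_2}(j_1\,m)(j_2\,m)=(m-1)\,\mathrm{id}+\sum_{j_1\ne j_2}(j_1\,m)(j_2\,m)$ since $(j\,m)^2=\mathrm{id}$, so $\sum_{\mathcal p\in\mathcal P_{2,m}}\pi_m(\mathcal p)=X_m^2-(m-1)=P_{2,m-1}(X_m)$. For the inductive step $(l\ge 3)$ I would left-multiply the $(l-1)$-identity by $X_m$, obtaining
\[
X_m\sum_{\mathcal p\in\mathcal P_{l-1,m}}\pi_m(\mathcal p)=\sum_{j_0=1}^{m-1}\ \sum_{(j_1,\dots,j_{l-1})\in\mathcal P_{l-1,m}}(j_0\,m)(j_1\,m)\cdots(j_{l-1}\,m),
\]
and split the sum according to whether $j_0\ne j_1$ or $j_0=j_1$. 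In the first case $(j_0,\dots,j_{l-1})$ has all consecutive entries distinct, hence runs exactly once over $\mathcal P_{l,m}$, contributing $\sum_{\mathcal P_{l,m}}\pi_m$. In the second case $(j_0\,m)(j_1\,m)=\mathrm{id}$, so the word collapses to $\pi_m(j_2,\dots,j_{l-1})$ with $(j_2,\dots,j_{l-1})\in\mathcal P_{l-2,m}$, and the common value $j_0=j_1$ is free subject only to $j_1\ne j_2$, giving $m-2$ choices; so this case contributes $(m-2)\sum_{\mathcal P_{l-2,m}}\pi_m$. Rearranging and feeding in the inductive hypothesis and the polynomial recurrence closes the induction. (The case $l=2$ has to be isolated because then the collapsed word is empty and $j_0=j_1$ ranges over all $m-1$ values rather than $m-2$, which is exactly why the recurrence above carries a $+1$ correction at $l=2$.)

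For the second assertion I would expand $\prod_r P_{l_r,m_r-1}(X_{m_r})=\sum_{\underline{\mathcal p}\in\prod_r\mathcal P_{l_r,m_r}}\prod_r\pi_{m_r}(\mathcal p_r)$ inside the group algebra and apply linearity of $\tr$, using that $\tr(\sigma)$ is a fixed multiple of $\mathbbm 1[\sigma=\mathrm{id}_{S_n}]$ (as in \ref{repr}); only the tuples $\underline{\mathcal p}$ with $\prod_r\pi_{m_r}(\mathcal p_r)=\mathrm{id}$ survive, giving the stated count. The only point requiring care is the bookkeeping in the inductive step — getting the multiplicity $m-2$ rather than $m-1$ and separating out $l=2$ — which is precisely the symmetric-group mirror of the argument behind (\ref{eq:chebviawalk}); beyond that the proof is mechanical, consistent with the ``checked by induction'' in the statement.
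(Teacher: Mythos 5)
Your proposal is correct and follows exactly the route the paper indicates: the paper gives no details beyond ``can be checked by induction'' (citing [JS, Lemma~4.1]), and your induction — with the three-term recurrence $P_{l,m-1}(x)=xP_{l-1,m-1}(x)-(m-2)P_{l-2,m-1}(x)$ for $l\ge 3$, the isolated base case $l=2$ accounting for the $(m-1)$ versus $(m-2)$ backtracking count, and the collapse $(j_0\,m)(j_1\,m)=\mathrm{id}$ when $j_0=j_1$ — is precisely that verification. The derivation of the trace identity by expanding the product and using that the (normalised) trace of a permutation is the indicator of the identity is also the intended one.
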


\subsubsection{}\label{wigmean}
Let us prove that Wigner's law holds in the mean:
\begin{equation}\label{eq:wigmean}
 \mathbb{E} \Resc_{\sqrt n} \rho_n^{\textrm{unif}} = 
 \mathbb{E} \Resc_{\sqrt n} \mu_n^{\textrm{unif}} \to \rho_\sc~.
 \end{equation}
We omit many details, which can be found, for example, in \cite[2.4.1]{me_surv}.

\begin{proof}[Proof of Wigner's law in the mean]
The  polynomials $U_k(x/2)$ are orthogonal
with respect to $\rho_\sc$, therefore to prove (\ref{eq:wigmean})
it suffices to show that
\[ \lim_{n \to \infty}  \mathbb{E} \int U_k(x/2)d \Resc_{\sqrt n} \rho_n^{\textrm{unif}}(x) =
\lim_{n \to \infty}  \mathbb{E} \int U_k(x/2)d \Resc_{\sqrt n} \mu_n^{\textrm{unif}}(x) = 0~, \,\, k = 1,2,\cdots,\]
which, by (\ref{eq:viacheb}), is equivalent to
\begin{equation}\label{eq:wigmeantag}
\lim_{n \to \infty} n^{-\frac{k}2} \mathbb{E}  P_{k,n-1}(H)(n,n) = 0~, \quad k \geq 1~,
\end{equation}
and we do this  using Wigner's power-counting argument, as follows.

The quantity
 $\mathbb{E} P_{k,n}(H)(n,n)$ counts the number of paths in $\mathcal{P}_{k,n-1}(n,n)$ which satisfy the parity condition
\[ \forall a \neq b \quad \# \left\{ j~, \, u_j = a~, \, u_{j+1} = b \right\} =
 \# \left\{ j~, \, u_j = b~, \, u_{j+1} = a \right\} ~, \]
where we set $u_0 = u_k = n$. First, we divide $\mathcal{P}_{k,n-1}(n,n)$  into isomorphism classes: $\mathcal p$
is isomorphic to $\mathcal p'$ if $\mathcal p = \sigma \circ \mathcal p'$ for some
permutation $\sigma \in S_n$. The number of isomorphism classes is bounded
as $n \to \infty$, for every $k$. Second, to every class we associate the characteristic
$\chi = V - E + 1$, where $V$ is the number of vertices and $E$ is the number of edges, and observe that $E \leq k/2$ and hence the contribution of a class is bounded  by $C_k n^{\frac{k}{2}+\chi-1}$. Finally, a path can not be tree-like (such as $(u_0, \cdots, u_{10}) = (j_1,j_2,j_3,j_4,j_3,j_5,j_6,j_5,j_3,j_2,j_1)$), since trees have leaves at which the path backtracks (in the example in the previous parentheses, $u_2 = u_4 = j_3$, et cet.), hence $\chi \leq 1$. A somewhat more careful argument shows that
$\chi \leq 0$ with equality for paths isomorphic to the one on \cite[Figure 4, right]{JS}, \cite[Figure 3.1, middle]{me_surv},
but this is not needed at the moment. Finally,
\[0 \leq n^{-\frac{k}2} \mathbb{E}  P_{k,n-1}(H)(n,n) \leq C_k' n^{-1}~.\]
This proves (\ref{eq:wigmeantag}) and (\ref{eq:wigmean}).
\end{proof}

\subsubsection{}\label{kermean} In the same way,  \ref{mainjmrel} can be used to prove that Theorem~\ref{tran} holds in the mean:
\[ \Resc_{\sqrt{n}} \mathbb E  \mu[\lambda^n] \to \rho_\sc~. \]
\begin{proof}[Proof of Kerov's law in the mean]
It suffices to show that
\begin{equation}\label{eq:tmp_comb1}
\lim_{n \to \infty} \frac{1}{n^{k/2}} \frac1{n!} \tr P_{k,n-1}(X_n) = 0~, \quad k =1,2,3,\cdots
\end{equation}
The argument, essentially due Biane (see \cite{Bi}), is similar to \ref{wigmean}.
We write
\[ \tr P_{k,n-1}(X_n) = \# \left\{ \mathcal p \in \mathcal P_{k,n} \, \mid \, \pi_n(\mathcal p ) = 1\right\} \]
and divide  the solutions to  the equation $\pi_n(\mathcal p) = 1$
into isomorphism classes (by $S_{n-1}$-conjugation).
Then we assign to every class the characteristic
\[ \chi = \# \{ \text{distinct indices in $\mathcal p$}\} - \frac{k}{2} +2~.\]
See \cite[4.2.1]{JS} for a graphical interpretation. The irreducibility condition
(\ref{eq:irred}) rules out tree-like solutions such as
\[ (j_1 \, n) \, (j_2 \, n) \, (j_3\, n) \, (j_3 \, n) \, (j_2 \, n) \, (j_4 \, n) \, (j_4 \, n)
\, (j_1 , n) = 1 \]
with $\chi = 2$, therefore $\chi \leq 1$ (in fact, $\chi \leq 0$, but this is not
required at the moment), and this implies (\ref{eq:tmp_comb1}).
\end{proof}

\subsubsection{}\label{meantodistr}
To prove that Wigner's law and Kerov's law hold in distribution, we need to show that also
the variance of
\[ \int U_k(x/2) d\mathcal{R}_{\sqrt n}\rho_n^{\textrm{unif}} \quad \text{and} \quad
\int U_k(x/2) d\mathcal{R}_{\sqrt n}\mu[\lambda^n] \]
tends to zero. This is accomplished via a  combinatorial argument which is similar to
that in \ref{wigmean} and \ref{kermean}: one shows that, for any isomorphism
class of pairs contributing to the variance, the number of distinct vertices / indices is
at most $k/2$. We omit the details.

\subsubsection{}\label{hwwig}
\noindent In order to consider  fluctuations, 
we introduce another family of polynomials 
\begin{equation}\label{eq:viacheb'}
	Q_{k,m}(x) =  2(m-1)^{\frac{k}{2}} \, T_k\left( \frac{x}{2\sqrt{m-1}} \right) - 2(m-1)^{\frac{k-2}{2}} \, T_{k-2}\left( \frac{x}{2\sqrt{m-1}} \right)
	\end{equation}
Then we have, for $k \geq 3$:
\begin{equation}\label{eq:qkcomb} \tr Q_{k,n-1}(H) = \sum_{\mathcal p \in\widehat{\mathcal P}_{k,n}^\circ}
\widehat{\pi}(\mathcal p, H)-\sum_{\mathcal p \in\widehat{\mathcal P}_{k-2,n}^\circ}
\widehat{\pi}(\mathcal p, H)~,\end{equation}
where
\[ \widehat{\mathcal P}_{k,n}^\circ= \left\{
\mathcal p \in \widehat{\mathcal P}_{k,n} \, \big| \, u_{k-1} \neq u_1\right\}\]
is the collection of cyclically non-backtracking paths.
The relation (\ref{eq:viacheb'}) is derived from (\ref{eq:viacheb}) using the identity
\[ U_k - U_{k-2} = 2 T_k~. \]
For $k=1,2$, we have:
\[ \tr 2 \sqrt{n-2} \, T_1\left(\frac{H}{2\sqrt{n-2}}\right) = 0~, \quad
\tr 2 (n-2) \,T_2\left(\frac{H}{2\sqrt{n-2}}\right) = -n(n-3)~.\]
Consequently, we obtain that
\begin{equation}\label{eq:tkcomb}
\tr 2  (n-2)^{\frac{k}{2}} T_k\left(\frac{H}{2\sqrt{n-2}}\right) = \sum_{\mathcal p \in\widehat{\mathcal P}_{k,n}^\circ}
\widehat{\pi}(\mathcal p, H) - n(n-3)\mathbbm{1}_{\text{$n$ is even}}~. 
\end{equation}
The relation (\ref{eq:tkcomb}) was first derived by Oren, Godel
and Smilansky \cite{OGS}, using a trace formula (see further \cite{OS1,OS2} and
\ref{trace} below).
Related combinatorial interpretations of the traces of Chebyshev polynomials
of the first kind appeared in the works of Anderson and Zeitouni \cite{AZ} (who
used generating functions),
and of Kusalik, Mingo and Speicher \cite{KMS} and Schenker and Schulz-Baldes \cite{SchSch}.

\begin{proof}[Proof of Proposition~\ref{unimod}]
The proposition is derived from (\ref{eq:tkcomb}) using an approximation
argument and 
the following two claims.
\begin{cl} Let $\widehat{\mathcal C}_{k,n}$ be the sub-collection of paths in $\widehat{\mathcal{P}}^\circ_{k,n}$   in which all the vertices are distinct. Then
\begin{equation}\label{eq:clA}
(n-2)^{-\frac{k}{2}}   \sum_{\mathcal p \in\widehat{\mathcal P}_{k,n}^\circ \setminus \widehat{\mathcal C}_{k,n}} \widehat{\pi}(\mathcal p, H) \to 0\end{equation}
in distribution.
\end{cl}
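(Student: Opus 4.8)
The plan is to show that the paths in $\widehat{\mathcal P}^\circ_{k,n}\setminus\widehat{\mathcal C}_{k,n}$ contribute at a strictly lower power of $n$ than the normalisation $(n-2)^{k/2}$, so that after taking expectations via the parity condition from Lemma~\ref{hwwig} (every edge traversed as many times forward as backward in order to have $\widehat\pi(\mathcal p;H)\equiv 1$ in expectation), what remains is $O(n^{-1/2})$ or better, and convergence to $0$ in distribution follows by a second-moment (or first-moment) estimate exactly as in \ref{wigmean} and \ref{meantodistr}. Concretely, I would first fix $k$ and partition $\widehat{\mathcal P}^\circ_{k,n}$ into $S_n$-isomorphism classes; the number of classes is bounded independently of $n$. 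For each class the number of tuples realising it is $\asymp n^{V}$ where $V$ is the number of distinct vertices, so the expected contribution of a class is $\le C_k\, n^{V}$ after imposing the parity constraint (which is what makes $\mathbb E\,\widehat\pi\in\{0,1\}$). Thus it suffices to prove that every isomorphism class meeting $\widehat{\mathcal P}^\circ_{k,n}\setminus\widehat{\mathcal C}_{k,n}$, i.e.\ every class of cyclically non-backtracking closed paths with a repeated vertex, satisfies $V\le \tfrac{k}{2}$, while the full-support class $\widehat{\mathcal C}_{k,n}$ is the unique one with $V=\tfrac{k}{2}+1$ (when $k$ is even).

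The key combinatorial step is the bound $V\le k/2$ for degenerate classes. The argument is Wigner power counting adapted to the cyclic, non-backtracking setting. Associate to a closed path its multigraph $G$ on $V$ vertices with $E$ edges (counted without multiplicity), and set $\chi=V-E+1$. Since the path is closed of length $k$ and every edge must be traversed an even number of times $\ge 2$ for the parity/expectation constraint, $E\le k/2$, hence $V=\chi+E-1\le \chi+k/2-1$. The point is then that non-backtracking (even \emph{cyclically} non-backtracking) forbids $\chi=2$: a tree-like path necessarily has a leaf, and at a leaf the walk must immediately retrace the pendant edge, i.e.\ backtrack --- and the cyclic condition $u_{k-1}\neq u_1$ rules out the one remaining way a leaf could appear at the basepoint. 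So $\chi\le 1$, giving $V\le k/2$, \emph{unless} $G$ is a tree with a Eulerian-type doubling, i.e.\ $\chi=1$, $E=k/2$, $V=k/2+1$; but a connected graph with $\chi=1$ is a tree, and a non-backtracking closed walk of length $k=2E$ covering each edge of a tree exactly twice again forces backtracking at the leaves. Hence the only surviving configuration with $V=k/2$-ish maximal is a single $k$-cycle, which is exactly $\widehat{\mathcal C}_{k,n}$ (possible only for even $k$), and every other class has $V\le k/2$, i.e.\ contributes $O(n^{-k/2}\cdot n^{k/2})=O(1)$ to $\mathbb E$ --- and a refined count (tracking that at least two edge-identifications or a vertex collision further drops the exponent) gives $O(n^{-1/2})$, with the variance estimate handled identically to \ref{meantodistr}.

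The main obstacle is the careful bookkeeping that \emph{cyclic} non-backtracking (rather than just non-backtracking) genuinely excludes the marginal tree-with-doubled-edges configurations at the correct order; the basepoint $u_0=u_k=n$ is a potential exception to "every leaf forces a backtrack", and one has to check that the constraint $u_{k-1}\neq u_1$ in the definition of $\widehat{\mathcal P}^\circ_{k,n}$ closes this gap, and moreover that the single-cycle class — which does have the maximal vertex count — lies in $\widehat{\mathcal C}_{k,n}$ and is therefore removed. Once this structural dichotomy is in place, everything else is the standard moment-method approximation argument, and I would simply cite \ref{wigmean}, \ref{meantodistr}, and \cite[4.2.1]{JS}, \cite[2.4.1]{me_surv} for the routine parts rather than reproduce them.
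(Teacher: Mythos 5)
There is a genuine gap, and it sits exactly where you defer to references. The summands $\widehat\pi(\mathcal p;H)$ are unit-modulus complex numbers, so convergence to $0$ in distribution is obtained by showing that the \emph{second} moment $\mathbb E\bigl|(n-2)^{-k/2}\sum_{\mathcal p}\widehat\pi(\mathcal p;H)\bigr|^2$ tends to $0$; this is a sum over \emph{pairs} $(\mathcal p,\mathcal p')$ of degenerate paths, normalised by $(n-2)^{-k}$, and a pair contributes iff the \emph{union} of the two walks satisfies the parity condition --- neither walk individually need satisfy it. Your key combinatorial bound ($E\le k/2$, hence $V\le k/2$) is derived by imposing the parity condition on a single path of length $k$, so it controls only the first moment $\mathbb E X_n$, which is beside the point. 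Deferring the variance to \ref{meantodistr} does not close the gap: there the normalised trace carries an extra $1/n$, so the pair normalisation is $n^{-k-2}$ and a crude bound $V\le k+1$ on the union suffices; here the normalisation is $n^{-k}$, the estimate is at critical order, and you need $V\le k-1$ for every contributing pair of \emph{degenerate} walks. That sharper bound is precisely where the exclusion of $\widehat{\mathcal C}_{k,n}$ enters: a simple $k$-cycle paired with its reversal has $V=k$ and contributes at order one (these pairs are the Gaussian of Claim~B). The paper's argument is exactly this dichotomy applied to pairs: a cyclically non-backtracking closed walk cannot span a tree, so each component of the union graph has $V\le E$, whence $V\le E\le k$ for a contributing pair, with equality forcing the union to be a bare $k$-cycle traversed once by each walk, i.e.\ both walks simple cycles; removing those gives $V\le k-1$ and a second moment $O(1/n)$.

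There is also a concrete error in your description of the extremal class: $\widehat{\mathcal C}_{k,n}$ consists of simple $k$-cycles, which have $V=E=k$ distinct vertices (not $k/2+1$) and exist for every $k\ge 3$ (not only even $k$); you appear to be conflating them with the doubled trees that dominate the law-of-large-numbers computation in \ref{wigmean}. Note, moreover, that a simple cycle traverses each edge exactly once, so $\mathbb E\,\widehat\pi(\mathcal p;H)=0$ for $\mathcal p\in\widehat{\mathcal C}_{k,n}$: the class being removed is invisible to the first moment, which is a further indication that a first-moment analysis cannot explain why it must be removed.
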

\begin{cl} Let $g_3, g_4, \cdots$ be independent standard Gaussian variables. Then
\begin{equation}\label{eq:Skn}
\left\{ (n-2)^{-\frac{k}{2}} \sum_{\mathcal p \in \widehat{\mathcal C}_{k,n}} \widehat{\pi}(\mathcal p, H)\right\}_{k \geq 3} \to \left\{  \sqrt{k} \, g_k \right\}_{k \geq 3}
\end{equation}
in distribution.
\end{cl}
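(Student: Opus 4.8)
The plan is to prove (\ref{eq:Skn}) by the method of moments, reading it as the convergence of all finite-dimensional marginals (equivalent to convergence in distribution on the sequence space $\mathbb{R}^{\{3,4,\dots\}}$ with the product topology). Write $S_{k,n}=(n-2)^{-k/2}\sum_{\mathcal p\in\widehat{\mathcal C}_{k,n}}\widehat\pi(\mathcal p;H)$; reversing the orientation of a closed walk $\mathcal p\in\widehat{\mathcal C}_{k,n}$ yields another element of $\widehat{\mathcal C}_{k,n}$ whose contribution is $\overline{\widehat\pi(\mathcal p;H)}$, so each $S_{k,n}$ is real. I will show that for every finite family $k_1,\dots,k_m\geq3$ (with repetitions, and $K:=\sum_ik_i$),
\[ \mathbb{E}\Big[\prod_{i=1}^m S_{k_i,n}\Big]\ \longrightarrow\ \sum_{P}\ \prod_{\{a,b\}\in P}k_a~, \]
the sum being over perfect matchings $P$ of $\{1,\dots,m\}$ with $k_a=k_b$ for each pair $\{a,b\}\in P$ (an empty sum, hence $0$, if there is no such matching). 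By Wick's formula the right-hand side is exactly $\mathbb{E}\big[\prod_{i=1}^m\sqrt{k_i}\,g_{k_i}\big]$; as the joint law of a finite vector of independent Gaussians is determined by its moments, the method of moments then gives (\ref{eq:Skn}).

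Expanding the product, and using that the entries $\{H(a,b)\}_{a<b}$ are independent, uniform on $S^1$, with $H(b,a)=\overline{H(a,b)}$, the expectation of a product of entries and their conjugates is $1$ when every unordered edge occurs equally often as a factor $H(a,b)$ and as a factor $\overline{H(a,b)}$ --- i.e.\ when the combined walks $\mathcal p_1,\dots,\mathcal p_m$ traverse it equally often in each direction --- and $0$ otherwise. Hence $\mathbb{E}[\prod_iS_{k_i,n}]=(n-2)^{-K/2}$ times the number of tuples $(\mathcal p_1,\dots,\mathcal p_m)\in\prod_i\widehat{\mathcal C}_{k_i,n}$ meeting this balance condition. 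Now I would run the power-counting argument of \ref{wigmean}: group these tuples into finitely many (for fixed $K$) classes under relabelling of vertices, a class with $v$ distinct vertices carrying $\asymp n^v$ tuples. For a balanced tuple let $G$ be its union graph, with edge set $E$ and connected components $G_1,\dots,G_c$. Balance forces each edge of $G$ to be used at least twice among the $K$ edge-slots, so $|E|\leq K/2$; and since each $\mathcal p_i$ spans a cycle subgraph of $G$, no $G_j$ is a tree, so $|V(G_j)|\leq|E(G_j)|$, and summing over $j$ gives $v\leq|E|\leq K/2$. Thus $\mathbb{E}[\prod_iS_{k_i,n}]=O(1)$ and only the classes with $v=K/2$ matter in the limit.

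It remains to describe that stratum. Equality $v=K/2$ forces every edge of $G$ to be used exactly twice and every $G_j$ to be unicyclic; since the only cycle subgraph of a unicyclic graph is its core cycle, every $\mathcal p_i$ contained in $G_j$ runs around that same cycle $\gamma_j$, and a count of edge-slots then shows exactly two walks run around each $\gamma_j$, necessarily of the same length $|\gamma_j|$. These two walks must traverse $\gamma_j$ in opposite directions: in the same direction every edge would carry a factor $H(e)^2$ and the expectation would vanish, whereas in opposite directions each edge contributes $H(e)\overline{H(e)}=1$ and $\mathbb{E}[\widehat\pi(\mathcal p_a;H)\widehat\pi(\mathcal p_b;H)]=1$. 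So the surviving configurations are exactly: a matching $P$ of $\{1,\dots,m\}$ with $k_a=k_b$ on every pair, and for each pair $\{a,b\}$ a closed walk $\mathcal p_a\in\widehat{\mathcal C}_{k_a,n}$ together with one of its $k_a$ reversals $\mathcal p_b$; distinct components occupy disjoint vertex sets up to an $O(n^{K/2-1})$ correction, and the expectation factorises over components. Since for a single pair the number of admissible $(\mathcal p_a,\mathcal p_b)$ is $k_a\,n(n-1)\cdots(n-k_a+1)$, and $(n-2)^{-k_a}\,k_a\,n(n-1)\cdots(n-k_a+1)\to k_a$, the displayed limit follows.

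The one step not entirely routine is the power-counting dichotomy $v\leq K/2$ together with the identification of the equality stratum; this is, however, the very mechanism already used for the first-moment asymptotics in \ref{wigmean} and \ref{kermean} and for the variance estimates in \ref{meantodistr}, now applied to a product of traces, so in the write-up I would only point out the minor modifications rather than redo the combinatorics.
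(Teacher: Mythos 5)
Your proposal is correct and follows essentially the same route as the paper's own (very terse) proof: the method of moments, with the mixed moments evaluated by power-counting so that only pairings of oppositely-oriented cycles on disjoint vertex sets survive, the factor $k$ arising as the number of alignments (cyclic rotations) of the reversed $k$-cycle, and the Wick rule identifying the limit as $\{\sqrt{k}\,g_k\}$. You have simply filled in the combinatorial details that the paper leaves as a sketch, and your treatment of the edge cases (reality of $S_{k,n}$, vanishing of same-orientation pairs, lower-order contributions from overlapping components) is accurate.
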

Claim~A is proved by power-counting as in the \ref{wigmean} above: the elements of
$\widehat{\mathcal C}_{k,n}$ have $V = E$, whereas for all the other
paths in $\widehat{\mathcal{P}}^\circ_{k,n}$ $V < E$. This implies that
$\mathbb E [\text{LHS of} \, (\ref{eq:clA})]^2 \to 0$.

To prove Claim~B, denote the expression inside the braces on
the left-hand side of (\ref{eq:Skn}) by $S_{k,n}$; then (and this is the main
combinatorial step in the proof)
\[\begin{split} & \lim_{n\to\infty}  \mathbb{E} \prod_{j=1}^r S_{k_j,n} \\
&=
\sum_{\text{pairings of $\{1,\cdots,r\}$}} \prod_{\text{pair $(j, j')$}}
\# \left\{ \text{alignments of a $k_j$-cycle with a $k_{j'}$-cycle}\right\}\\
&= \sum_{\text{pairings of $\{1,\cdots,r\}$}} \prod_{\text{pair $(j, j')$}}
k_{j} \, \delta_{k_j,k_{j'}}~,\end{split} \]
where the first equality is a consequence of power-counting. Claim~B
follows by the Wick rule.\qedhere
\end{proof}

\subsubsection{}\label{hwpart}
\begin{proof}[Proof of Proposition~\ref{kerov_clt}]
Let us denote by $\operatorname{move}_m$ any linear combination
of permutations for which $m$ is not a fixed point. Then (for $k \geq 3$)
\[ Q_{k,m-1}(X_m) = \sum_{\mathcal p \in{\mathcal P}_{k,m}^\circ}
{\pi}_m(\mathcal p)  + \operatorname{move}_m~,\]
where
\[ {\mathcal P}_{k,m}^\circ = \left\{
\mathcal p \in {\mathcal P}_{k,m} \, \big| \, u_{k} = u_1~, u_{k-1} \neq u_2 \right\}\]
is the collection of cyclically non-backtracking paths. In particular, 
\[ \tr \prod_{j=1}^r Q_{k_j,m_j-1}(X_{m_j}) = \tr \prod_{j=1}^r
\sum_{\mathcal p \in{\mathcal P}_{k_j,m_j}^\circ}
{\pi}_{m_j}(\mathcal p)~,\]
if $m_1 < \cdots < m_r$.
Consider the sub-sub-collection
\[ \mathcal{C}_{k,m} = \left\{ (j_1, \cdots, j_k < m \, \mid \,  \text{$j_k = j_1$ and $j_1, \cdots, j_{k-1}$ are pairwise distinct}\right\}~.\]
(Note that the cycles have
$k-1$ vertices, as opposed to the $k$-long cycles of \ref{hwwig}.)
Then Claim \ref{hwwig}.A  takes the form
\[ n^{-\frac{k}{2}}  \left[ Q_{k,n-1}(X_n)|_{\tab} - \sum_{\mathcal p \in {\mathcal C}_{k,n}} {\pi}_n(\mathcal p)|_{\tab} \right] \longrightarrow 0~,\]
where $\tab$ is sampled from the Plancherel growth process, 
whereas in place of Claim \ref{hwwig}.B one has
\[ \left\{ \sum_{\mathcal p \in {\mathcal C}_{k,n}} {\pi}_n(\mathcal p)|_{\tab}
\right\}_{k \geq 3} \longrightarrow \left\{  \sqrt{k-1} \, g_k \right\}_{k \geq 3}~.\]
Both claims are proved by evaluating moments, where we take
$m_j = n-j+1$.  Then Proposition~\ref{kerov_clt} follows from the claims
by an approximation argument.

\end{proof}

\subsection{Some comments}

\subsubsection{}The counterpart of Theorem~\ref{lp} (i.e.\ the special
case of (\ref{eq:lp2ors})) for the ensemble
$H^{\mathrm{unif}}$ is stated as follows:
\begin{equation}\label{eq:clt_loc_unif}
\sqrt{n} \, d(\Resc_{\sqrt{n}}[\mu_n^{\mathrm{unif}}](x) - \rho_\sc(x)) \,\overset{\operatorname{distr}}\longrightarrow\,
\sum_{k \geq 3} g_k U_k(x/2) \frac{\sqrt{4-x^2}}{2\pi} dx~,\end{equation}
while (\ref{eq:lp2orscor}) takes the form
\begin{equation*}
\sqrt{n} (\Resc_{\sqrt{n}}[\omega_n^{\mathrm{unif}}](x) - \Omega_\LSVK(x))dx \to
\sum_{k \geq 3} 2g_k \left\{ \frac{U_{k-2}(x/2)}{k-1} - \frac{U_{k}(x/2)}{k+1} \right\} \frac{\sqrt{4-x^2}}{2\pi} dx~. \end{equation*}
The  coefficient ``$1$'' in front of $g_k$ in (\ref{eq:clt_loc_unif}) is combinatorially
interpreted as the number of ways to align two cycles of length $k$ with a marked
vertex.

\subsubsection{}\label{transp}
The parallelism between random matrices and random Young diagrams
is somewhat more transparent if described via the  matrix of transpositions
\[ \Gamma_n = \left( \begin{array}{ccccc}
0 & (12) & (13) & \cdots & (1n)\\
(12) & 0 & (23 ) & \cdots & (2n)\\
&&\ddots&&\\
(1n) & (2n) & (3n ) & \cdots & 0\\
\end{array} \right) \]
introduced (in a different gauge) by Biane \cite{Bi}. The spectral
properties of the restriction $\Gamma_n|_{\lambda^n}$
to $\lambda^n  \otimes \mathbb C^n$,
where $\lambda^n$ is an irreducible representation, are
similar to those of the Jucys--Murphy elements; if $\lambda^n$ is
randomly sampled from the Plancherel measure, the corresponding
matrix $\Gamma_n|_{\lambda^n}$ can be considered
as a counterpart to the random matrix $H_n$.

In particular, the use of $\Gamma_n$ in place of $X_n$ makes
the argument in \ref{hwpart} even more similar to that in \ref{hwwig},
and also introduces a conceptual simplification by allowing
to work with central elements only.

\subsubsection{}\label{trace}
The combinatorial constructions of Section~\ref{S:part} can be recast
in terms of Ihara-type zeta-functions \cite{Ihara,Bass}.
For a Hermitian matrix $H$
such that $|H(u,v)|=1-\delta_{uv}$ (or, more suggestively, $H(u,v)H(v,u) = 1-\delta_{uv}$), consider the zeta functions
\[ \zeta_{H_n}(u) = \prod_{l\geq 0} \prod_{\mathcal p \in \widehat{\mathcal P}_{l,n}^\circ} (1 -  \widehat\pi(\mathcal p; H) u^l)^{-1}~.\]
A variant of the Bass determinantal formula \cite{Bass} asserts that
\begin{equation}\label{eq:bass1}
\zeta_{H_n}(u)^{-1} = (1-u^2)^{\binom{n-1}{2}-1} \det(\mathbbm 1 - u H_n + (n-1)u^2 \mathbbm 1)~.  \end{equation}
Similarly, consider the $\mathbb C[S_n]$-valued function
\[ \zeta_n(u) = \prod_{l\geq 0} \prod_{\mathcal p \in \mathcal P_{l,n}^\circ} (1 - \pi_n(\mathcal p) u^l)^{-1}~,\]
for which one has 
\[ \zeta_n(u)^{-1} = (1-u^2)^{\binom{n-1}{2}-1} 
\det\nolimits_{n \times n}(\mathbbm 1 - u \, \Gamma_n  + (n-1) \, u^2 \mathbbm 1)~.\]  
Both sides of this identity are central, and hence act as scalars on each irreducible representation $\lambda^n$ of $S_n$; thus we obtain:
\begin{equation}\label{eq:bass2}
\left[\zeta_n(u)|_{\lambda^n} \right]^{-1} = (1-u^2)^{\binom{n-1}{2}-1} 
\det\nolimits_{n \times n}(\mathbbm 1 - u \, \Gamma_n|_{\lambda^n} + (n-1) \, u^2 \mathbbm 1)~.\end{equation}

\smallskip\noindent
The logarithms of the relations (\ref{eq:bass1}) and (\ref{eq:bass2}) are trace formul{\ae} relating the spectra of $H_n$
and $\Gamma_n$ with the quantities $\widehat \pi_n$ and
$\pi_n$, respectively.

The trace formul{\ae} thus obtained do not explicitly separate between the semicircle
and the corrections to it. Improved trace formul{\ae}, in which such a separation
is explicit, were obtained, in the setting of $d$-regular graphs,
by Oren, Godel and Smilansky, see  \cite{OGS,OS1} and further \cite{OS2}.
Their approach  can be applied to the problems discussed here.

\subsubsection{}
Chebyshev polynomials appear naturally in the combinatorial approach described
in Section~\ref{s:pfs}, and also in the trace formul{\ae} mentioned in \ref{trace}. Another approach in which their r\^ole is apparent was
developed by Joyner and Smilansky \cite{JS1,JS2}; it relies on the study
of the Fokker-Planck equation describing the evolution of the ensemble under
random walk. We refer in particular to \cite{JS2}, where Gaussian fluctuations
are analysed.

\bigskip
\paragraph{Acknowledgement}
It is a pleasure to express my gratitude to Yan Fyodorov, for emphasising the importance of critical points; to Vadim Gorin, for many helpful explanations, for
sending me a copy of Kerov's habilitation thesis, and for sharing
the preprint \cite{GZ} before publication;
 to Grigori Olshanski, for discussions of the central limit theorems for
Young diagrams and of many other interesting topics,  and for patiently answering
some questions which should be categorised as philosophical
only if philosophy is understood in the colloquial meaning listed third in the
dictionary of Ushakov; and  to Peter Yuditskii, for correspondence  on
the Markov moment problem and its continual analogues. It is equally pleasant to acknowledge my recognition to  Alexey Bufetov, for showing me the 
matrix $\Gamma_n$ used in \ref{transp} and \ref{trace},
to L\'aszl\'o Erd\H{o}s and Dominik Schr\"oder, for  pointing at several mistakes and irresponsible assertions in a preliminary version of this text; and to Christopher Joyner
and Uzy Smilansky, for spotting lapses of various kinds and suggesting numerous improvements, and for explaining in detail some results from the work \cite{JS2}.

\end{document}